\documentclass[10pt, oneside, reqno]{amsart}
\usepackage[utf8]{inputenc}
\usepackage[english]{babel}
\usepackage[T1]{fontenc}
\usepackage{amsmath}
\usepackage{amsfonts}
\usepackage{amssymb}
\usepackage{amsthm}
\usepackage{amscd}
\usepackage{soul}
\usepackage{geometry}
\usepackage{enumitem}
\usepackage{cancel}
\usepackage{graphicx}
\usepackage{mathtools}
\usepackage{color}
\usepackage{comment}
\usepackage{csquotes}
\usepackage{mathdots}
\usepackage{appendix}
\usepackage{xcolor}
\usepackage{bm}
\definecolor{marin}{rgb}   {0.,   0.3,   0.7}
\definecolor{rouge}{rgb}   {0.8,   0.,   0.}
\definecolor{sepia}{rgb}   {0.8,   0.5,   0.}
\usepackage[colorlinks,citecolor=marin,linkcolor=rouge,
            bookmarksopen,
            bookmarksnumbered
           ]{hyperref}

\usepackage{booktabs}

\newcommand\N{\mathbb{N}}
\newcommand\T{\mathbb{T}}

\newcommand\R{\mathbb{R}}
\newcommand\C{\mathbb{C}}

\newcommand{\Fc}{\mathcal{F}}
\newcommand{\Gc}{\mathcal{G}}
\newcommand{\dd}{\mathrm{d}}

\newcommand{\enstq}[2]{\left\{#1~\middle|~#2\right\}}

\newcommand\eps{\varepsilon}
\renewcommand{\Re}{\operatorname{Re}}
\renewcommand{\Im}{\operatorname{Im}}

\newcommand\loc{\mathrm{loc}}

\newcommand\Id{\mathrm{Id}}

\newcommand{\tvphi}{\tilde{\varphi}}
\newcommand{\tv}{\tilde{v}}
\newcommand{\tchi}{\tilde{\chi}}
\newcommand{\tpsi}{\tilde{\psi}}

\newcommand{\tPhi}{\tilde{\Phi}}
\newcommand{\tA}{\tilde{A}}
\newcommand{\tB}{\tilde{B}}
\newcommand{\tC}{\tilde{C}}
\newcommand{\tH}{\tilde{H}}
\newcommand{\tR}{\tilde{R}}

\newenvironment{proofof}[2]{\paragraph{\textit{ Proof of #1 #2.}}}{\hfill$\square$}

\theoremstyle{plain}
\newtheorem{theorem}{Theorem} [section]

\newtheorem{lemma}[theorem]{Lemma}

\newtheorem{proposition}[theorem]{Proposition}

\theoremstyle{remark}
\newtheorem{remark}[theorem]{Remark}

\numberwithin{equation}{section}

\title[Splitting methods for GP]{Splitting methods for the Gross-Pitaevskii equation on the full space and vortex nucleation}

\author{Quentin Chauleur}
\address[Quentin Chauleur]{Univ. Lille, CNRS, Inria, UMR 8524 - Laboratoire Paul Painlevé, F-59000 Lille, France. }
\email{quentin.chauleur@inria.fr}

\author{Gaspard Kemlin}
\address[Gaspard Kemlin]{LAMFA, Universit\'e de Picardie Jules Verne and CNRS, UMR 7352, 80039 Amiens, France.}
\email{gaspard.kemlin@u-picardie.fr}

\begin{document}

\begin{abstract}
We prove the convergence in Zhidkov spaces of the first-order Lie-Trotter and
the second-order Strang splitting schemes for the time integration of the
Gross-Pitaesvkii equation with a time-dependent potential and non-zero boundary
conditions at infinity.  We also show the conservation of the generalized mass
and the near-preservation of the Ginzburg-Landau energy balance law. Numerical
accuracy tests performed on a one-dimensional dark soliton corroborate our
theoretical findings. We finally investigate the nucleation of quantum
vortices in two experimentally relevant settings.
\end{abstract}
\thanks{A CC-BY public
copyright license has been applied by the authors to the present
document and will be applied to all subsequent versions up to the
Author Accepted Manuscript arising from this submission. }
\maketitle

\section{Introduction}

We consider the Gross-Pitaevskii equation
\begin{equation}\label{eq:GP} \tag{GP}
    i \partial_t u = \Delta u + \frac1{\varepsilon^2}(1-|u|^2)u + Vu; \quad u(0)=u_0,
\end{equation}
posed on the whole space $\R^d$ for $d \in \left\{1,2,3 \right\}$, with
non-vanishing boundary conditions
\[|u(t,x)| \to 1 \quad \text{when} \quad |x|\to+\infty .\]
 The function $V=V(t,x)$ represents a (possibly) time-dependent real-valued
 potential, and we denote by $\eps>0$ the constant appearing in front of the
 nonlinearity. This equation is also referred in the literature as the nonlinear
 Schrödinger equation with non-vanishing boundary conditions, or the
 Ginzburg-Landau-Schrödinger equation.

The Gross-Pitaevskii equation appears as a fundamental model in condensed matter
physics to describe the dynamics of Bose-Einstein condensates
\cite{Abid03,Ginzburg58,Neu90} or in nonlinear optics \cite{Kivshar98}. It has
been extensively studied in various settings, including simply connected bounded
domains
\cite{BethuelBrezisHelein94,CollianderJerrard98,LinXin99,JerrardSpirn08},
product spaces \cite{DeLaire24,DeLaire25}, on the sphere \cite{Gelantalis12} or
on the plane \cite{BethuelJerrardSmets08}, when quantum vortex states appear as
minimizers of the energy \cite{ChironPacherie23}.

To our knowledge, fewer works deal with the additional presence of a potential
in \eqref{eq:GP}, which acts here as a stirring potential. In the physics
literature
\cite{HuepeBrachet00,Kwak2023}, authors investigate the nucleation of quantum
vortices as the condensate flows in one direction at constant speed around an
obstacle, typically a cylinder. The only theoretical studies in this direction
that we are aware of concerns the one-dimensional case \cite{Maris2003} and the work \cite{LinWei019} in two-dimensional exterior domains. In the case of
small potentials, one should also mention \cite[Section
5.2]{BethuelGravejatSaut2008} where the existence of finite energy solutions are
proved in two dimensions, as well as \cite{PelinovskyKevrekidis2008}.

There have been a considerable amount of work in the last decades concerning the
numerical analysis and simulations of nonlinear Schrödinger-type equations, and
we refer to the review papers \cite{AntoineBaoBesse2013,BaoCai2013} and
references within. Recently, there have been a renew of interest for taking into
account a possibly time-dependent potential $V$ in such models, in particular in
the context of quantum turbulence \cite{Chauleur2025,ChauleurChicireanu2026} or for highly
oscillatory problems \cite{SuZhao2020}. Note that it also enables to efficiently
compute the dynamics of rotational Bose-Einstein condensates
\cite{BaoMarahrens2013}.

Several works \cite{Caliari21,Bao07} numerically investigate the dynamics of
quantum vortices by discretizing \eqref{eq:GP}, but no theoretical convergence
result has been established so far in the analytical framework related to
\eqref{eq:GP}, which is the purpose of the present paper. We derive convergence
estimates for standard splitting schemes in the functional setting associated with
the Ginzburg-Landau energy
\begin{equation} \label{eq:GL} \tag{GL}
  \mathcal{E}(u(t),t) = \int_{\R^d} |\nabla u(t)|^2 +\frac1{2\varepsilon^2} \int_{\R^d}
  \left(1-|u(t)|^2 \right)^2 + \int_{\R^d} V(t) \left(1-|u(t)|^2\right),
\end{equation}
requiring that $|V(t,x)|$, $|\partial_t V(t,x)|\to 0$ as $|x| \to +\infty$. In
fact, solutions to \eqref{eq:GP} are obtained from the Schr\"odinger flow
$ i \partial_t u(t) = - \frac12\partial_u\mathcal{E}(u(t),t)$ and formally satisfy
the energy balance law
\[  \frac{\dd }{\dd t} \mathcal{E}(u(t),t) =
  \int_{\R^d} \partial_t V(t) (1-|u(t)|^2). \]
In particular if $\partial_t V = 0$ the Ginzburg-Landau energy $\mathcal{E}$ is
conserved along the flow of~\eqref{eq:GP}, and we denote
$\mathcal{E}(u(t),t)=\mathcal{E}(u(t))=\mathcal{E}(u_0)$ in such case.

It should be noted that when the parameter $\eps \to 0$ (so that nonlinear
effects are strongly enhanced), one can describe the evolution of quantum
vortices by a reduced Hamiltonian system of differential equations
\cite{CollianderJerrard98,JerrardSpirn08,LinXin99,ovchinnikovGinzburgLandauEquationIII1998}, based on the
variational framework developed in \cite{BethuelBrezisHelein94}. This idea
has been recently exploited in several works \cite{Bao14,Kemlin2026,Bao23} to
efficiently simulate multiple vortex dynamics. Of course, such asymptotic
results remain valid up to the first time of spontaneous nucleation or
collision of quantum vortices, which may happen in finite time. In the current
work, we do not pursue into that direction, as we are interested in vortex
nucleation processes, and we take from now on $\eps=1$. However, we still
implicitly track the dependency of our constants on $\eps$ in our analysis, see
Remark~\ref{rem:influence_eps}, as some numerical simulations performed in
Section~\ref{sec:numerics} deal with fixed~$\varepsilon<1$.

In order to prove precise convergence estimates for the time integration of
\eqref{eq:GP} in the functional framework associated with the Ginzburg-Landau
energy \eqref{eq:GL}, our work presents several novelties which could be easily
adapted to other contexts. First, we prove a number of classical results on the
Cauchy theory for \eqref{eq:GP}. Second, we rigorously adapt the well-known
abstract Lie derivatives framework for nonlinear splitting methods
\cite{hairerGeometricNumericalIntegration2002,kochErrorAnalysisHighorder2013,Lubich2008}
to the case where the linear flow is replaced by an affine flow. This requires a
number of technical adaptations, for which most computations are carefully
performed in Appendix \ref{app:lie}. Finally, we extend this framework to the
case of non-autonomous infinite dimensional systems, possibly nonlinear,
inspired by the recent presentations from
\cite{blanesConciseIntroductionGeometric2025,blanesSplittingMethodsDifferential2024}
in the finite dimensional case. Details on such generalization are provided in
Section~\ref{sec:non_autonomous}.

This paper is organized as follows. In Section~\ref{sec:main_results}, we
introduce the functional framework and the standard splitting methods for
\eqref{eq:GP} and state our main convergence result. In
Section~\ref{sec:stability}, in addition to an extension of the Cauchy theory
for \eqref{eq:GP}, we state and prove stability estimates for the nonlinear and
affine flows. Local error estimates are then derived in
Section~\ref{sec:local_error}, which imply our convergence result. Conservation
of mass and quasi-preservation of energy by the splitting schemes are proven in
Section \ref{sec:consered_quantities}. Finally, numerical simulations in one and
two dimensions are performed in Section~\ref{sec:numerics}.

Throughout all the paper, $C$ denotes a generic positive constant
independent of the underlying parameters, and we specifically denote by
$C(\alpha)$ or $C_{\alpha}$ a constant depending on the parameter~$\alpha$.

\section{Main results} \label{sec:main_results}

\subsection{Functional framework}

One notorious difficulty in the study of \eqref{eq:GP} is the treatment of the
conditions at infinity, which requires an adapted functional framework. Various
approaches have been employed in the past to deal with the Cauchy problem
associated with \eqref{eq:GP}. One consists in working in the Zhidkov
spaces, defined as the closure for the norm
 \[  \| u \|_{X^k} \coloneqq \| u \|_{L^{\infty}} + \sum_{1 \leq |\alpha | \leq
     k } \| \partial^{\alpha} u \|_{L^2}  \]
 of the space of bounded and uniformly continuous functions $u\in
 \mathcal{C}^k(\R^d)$ with $\nabla u \in H^{k-1}(\R^d)$, see for instance
 \cite{Gallo04,Goubet2007}. The linear Schrödinger flow associated with
 \eqref{eq:GP} can then be defined on $X^k(\R^d)$ by the formula
 \begin{equation} \label{eq:linear_flow_Zhidkov}
 \left( e^{-it\Delta}\varphi \right)(t,x) \coloneqq \frac{e^{-id
     \pi/4}}{\pi^{d/2}} \lim_{\delta \to 0} \int_{\R^d} e^{(i-\delta )|z|^2}
 \varphi(x-2\sqrt{t} z) \dd z
\end{equation}
for any $\varphi \in X^k(\R^d)$, $k > d/2$, and for all $t\geq 0$. One can also
directly work on the energy space
\[  E\coloneqq \enstq{u \in H^1_{\loc}(\R^d)}{\nabla u \in L^2(\R^d),\ |u|^2-1
    \in L^2(\R^d)}  \]
associated with the Ginzburg-Landau energy \eqref{eq:GL} on the full space, as in
\cite{Gerard2006}. In particular, we have $E \subset X^1(\R^d)+ H^1(\R^d)$ for
any $d \geq 1$ \cite[Lemma~1]{Gerard2006}. Another method consists in
working around particular stationary states of \eqref{eq:GP}. For instance, in \cite{Gallo2008} the author proved that \eqref{eq:GP} is locally well posed on spaces of the form $\phi+H^1(\R^d)$ for $1 \leq d \leq 3$,
where $\phi$ denotes a regular function of finite energy $\mathcal{E}(\phi) <
+\infty$ such that
\begin{equation} \label{eq:requirements_phi} \tag{FE}
\phi \in \mathcal{C}^3(\R^d) \subset L^{\infty}(\R^d), \quad \nabla \phi \in
H^3(\R^d), \quad \text{and} \quad |\phi|^2-1 \in L^2(\R^d).
\end{equation}
In dimension $d=1$ with $V=0$, there exist traveling waves for \eqref{eq:GP}
for every speed $ |c| < \sqrt{2}$, also known as \textit{dark soliton} and
explicitly given by
\begin{equation} \label{eq:1D_soliton}
 u(t,x)=\phi_c(x+ct), \quad \phi_c(x)=\sqrt{\frac{2-c^2}{2}} \tanh \left(
   \frac{\sqrt{2-c^2}}{2} x  \right) + i \frac{c}{\sqrt{2}},
\end{equation}
so that the profiles $\phi_c$ satisfy \eqref{eq:requirements_phi}. Note in
particular that $\phi_c(+\infty) \neq \phi_c(-\infty)$. In higher dimensions $d
\geq 2$, traveling waves of finite energy of the form
\[ u(t,x)=\phi(x_1+ct,x_2,\ldots,x_d)   \]
also exist for almost all $|c| < \sqrt{2}$, and their asymptotic behavior at
infinity (under some axisymmetric assumptions around axis $x_1$) were formally
derived in \cite{Jones_1982} and rigorously proven in a series of paper
\cite{Gravejat2003,Gravejat2004,Gravejat2004_bis,Gravejat2006}: there are given
(up to a multiplicative constant of modulus one) by
\begin{equation} \label{eq:2D_soliton}
 \phi(x)-1 \underset{|x| \to + \infty}{\sim} \frac{i \vartheta x_1}{x_1^2+(1-c^2/2)x_2^2}
\end{equation}
in dimension two, and by
\begin{equation} \label{eq:3D_soliton}
  \phi(x)-1 \underset{|x| \to + \infty}{\sim} \frac{i \vartheta x_1}{ \left( x_1^2+(1-c^2/2)(x_2^2+x_3^2)  \right)^{3/2}}
\end{equation}
in dimension three, for some constant $\vartheta \in \R$ related to the energy of the traveling wave $\mathcal{E}(\phi)$.
Notice the algebraic decay in \eqref{eq:2D_soliton} for $d=2$ and in
\eqref{eq:3D_soliton} for $d=3$, compared to the exponential convergence towards
both $\phi_c(+\infty)$ and $\phi_c(-\infty)$ in \eqref{eq:1D_soliton} for $d=1$.
We also refer to \cite{ChironScheid2016,ChironScheid2018} for a presentation of
various finite energy solutions to \eqref{eq:GP} in dimension two. From now on we
fix such a finite energy solution $\phi$ satisfying
\eqref{eq:requirements_phi}.

\subsection{Splitting methods} \label{sec:splitting_u}

The linear flow \eqref{eq:linear_flow_Zhidkov} suggests the following operator splitting methods for the time integration of \eqref{eq:GP}, based on the formulation
 \[  \partial_t u = \mathcal{A}(u) + \mathcal{B}(u,t),   \]
 where
 \[ \mathcal{A}(u) = - i \Delta u \quad \text{and} \quad \mathcal{B} (u,t) = -i
   (1-|u|^2 )u -i V(t)u \]
and the solutions to the subproblems
\[ \left| \begin{aligned}
& \ i\partial_t \xi(t) = \Delta \xi(t),  \quad & \xi(0)=\xi_0 \in X^k(\R^d), \\
& \ i\partial_t \zeta(t) = (1-|\zeta(t)|^2 )\zeta (t)+
V(t_0+t)\zeta(t) ,  \quad &\zeta(0)=\zeta_0 \in X^k(\R^d),\
  t_0\in\R,
\end{aligned} \right. \]
for $k>d/2$ and $t \geq 0$. The associated flows are then explicitly given, for any fixed $t_0 \in \R$, by
\[ \left| \begin{aligned}
& \ \xi(t)=\Phi^t_\mathcal{A}(\xi_0)=e^{-it \Delta} \xi_0, \\
& \ \zeta(t)=\Phi^{t,t_0}_\mathcal{B}(\zeta_0)=e^{-it (1-|\zeta_0|^2)}e^{-i\int_{t_0}^{t_0+t} V(s) \dd s}  \zeta_0.  \\
\end{aligned} \right. \]

We now fix a time horizon $T>0$. Let $N \in \N^*$, and denote by $\tau=T/N \leq 1$ the
time discretization step. We also denote by $t_n=n\tau$ for $0 \leq n \leq N$
the discrete time grid, and we define recursively the Lie-Trotter splitting
scheme
\begin{equation}\label{eq:Lie_split_u}
  u_\mathcal{L}^{n+1}= \Phi_\mathcal{L}^{\tau,t_n} (u_\mathcal{L}^n) \coloneqq
  \Phi_\mathcal{B}^{\tau,t_n} \circ \Phi_\mathcal{A}^\tau (u_\mathcal{L}^n),
  \quad  u_\mathcal{L}^0=u_0,
\end{equation}
as well as the Strang splitting scheme
\begin{equation}\label{eq:Strang_split_u}
  u_\mathcal{S}^{n+1}= \Phi_\mathcal{S}^{\tau,t_n} (u_\mathcal{S}^n) \coloneqq
  \Phi_\mathcal{A}^{\frac{\tau}{2}} \circ \Phi_\mathcal{B}^{\tau,t_n} \circ
  \Phi_\mathcal{A}^{\frac{\tau}{2}} (u_\mathcal{S}^n), \quad
  u_\mathcal{S}^0=u_0.
\end{equation}

We can now state the main result of this paper, after recalling that we
equip $\mathcal{C}^\ell([0,T],H^k(\R^d))$ with the norm
\[
  \| w \|_{\mathcal{C}^\ell_TH^k} = \sup_{0\leq \alpha \leq \ell}
  \sup_{t\in[0,T]} \| \partial_t^\alpha w(t) \|_{H^k}.
\]
We refer to Proposition \ref{prop:Cauchy} and Proposition \ref{prop:high_reg} in
Section~\ref{sec:stability} for the required Cauchy theory of the solution $u$
to \eqref{eq:GP}.

\begin{theorem} \label{theorem_convergence}
    Let $1\leq d \leq 3$ and $T>0$. Assume that $u_0=\phi + v_0$ with $v_0 \in
    H^4(\R^d)$, $V~\in~\mathcal{C}^1(\left[0,T\right],H^4(\R^d))$ and $\phi$
    satisfying \eqref{eq:requirements_phi}. We denote by $u$ the unique global
    solution to \eqref{eq:GP} with initial data~$u_0$.

    \medskip
    \textbf{(Lie splitting).} There
    exist $\tau_{\mathcal{L}}$ and $C_{\mathcal{L}}$ such that
    \[ \forall\  0 < \tau \leq \tau_{\mathcal{L}},\quad
      \| u(t_n)-u_\mathcal{L}^n \|_{X^2} \leq C_{\mathcal{L}} \tau  \]
    where $\tau_{\mathcal{L}}$, $C_{\mathcal{L}}$ depend on
    $d,T,\|v_0\|_{H^4},
    \|\phi\|_{X^4},\|V\|_{\mathcal{C}^0_T H^4}$ and $\|V\|_{\mathcal{C}^1_T H^2}$.

    \medskip
    \textbf{(Strang splitting).} If moreover $v_0 \in H^6(\R^d)$,
    $\nabla \phi \in H^5(\R^d)$ and $V \in
    \mathcal{C}^2(\left[0,T\right],H^6(\R^d))$, then there exist
    ${\tau}_{\mathcal{S}}$ and $C_{\mathcal{S}}$ such that
    \[ \forall\ 0 < \tau \leq {\tau}_{\mathcal{S}}, \quad
      \| u(t_n)-u_\mathcal{S}^n \|_{X^2} \leq {C}_{\mathcal{S}} \tau^2  \]
    where $\tau_{\mathcal{S}}$, ${C}_{\mathcal{S}}$ depend on $d,T,\|v_0\|_{
      H^6},\|\phi\|_{X^6},\|V\|_{\mathcal{C}^0_T H^6}$,
    $\|V\|_{\mathcal{C}^1_T H^4}$ and $\|V\|_{\mathcal{C}^2_T H^2}$.
\end{theorem}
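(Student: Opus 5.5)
The plan follows the classical Lady Windermere's fan argument, with stability plus consistency giving convergence. It is carried out in the affine setting $u=\phi+v$, with $v\in H^k$. We write the exact solution as $u(t)=\phi+v(t)$. By Proposition~\ref{prop:Cauchy} and Proposition~\ref{prop:high_reg}, we have $v\in\mathcal{C}([0,T],H^4)$, and in the Strang case $v\in\mathcal{C}([0,T],H^6)$. We then rewrite both subflows as flows acting on the perturbation $v$:
\[
\Phi_\mathcal{A}^t(\phi+w)=\phi+\bigl(e^{-it\Delta}\phi-\phi\bigr)+e^{-it\Delta}w .
\]
Here $e^{-it\Delta}\phi-\phi=-i\int_0^t e^{-is\Delta}\Delta\phi\,\dd s\in H^{k}$ because $\nabla\phi\in H^{k+1}$ by \eqref{eq:requirements_phi}. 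So the $\mathcal{A}$-flow is an affine, isometric-up-to-translation flow on $H^k$. Similarly, $\Phi_\mathcal{B}^{t,t_0}(\phi+w)-\phi$ lies in $H^k$, because $1-|\phi+w|^2\in L^2\cap$(derivatives in $H^{k-1}$) and $V\in H^k$. Since the $X^2$ norm of a difference $u(t_n)-u^n$ is controlled by the $H^2$ norm (Sobolev embedding, $d\le3$), it suffices to prove $\|v(t_n)-v^n\|_{H^2}\le C\tau^p$.

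\textbf{Stability.} First, the affine flow satisfies $\|\Phi_\mathcal{A}^t(\phi+w_1)-\Phi_\mathcal{A}^t(\phi+w_2)\|_{H^2}=\|w_1-w_2\|_{H^2}$. Second, for the nonlinear flow I will show, using the algebra property of $H^2$ together with the explicit phase formula and the identity $\|e^{i\theta_1}-e^{i\theta_2}\|\lesssim|\theta_1-\theta_2|$, that
\[
\|\Phi_\mathcal{B}^{\tau,t_0}(\phi+w_1)-\Phi_\mathcal{B}^{\tau,t_0}(\phi+w_2)\|_{H^2}\le e^{C_M\tau}\|w_1-w_2\|_{H^2}
\]
whenever $\|w_j\|_{H^2}\le M$. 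Composing these gives the stability of $\Phi_\mathcal{L}$ and $\Phi_\mathcal{S}$ with constant $e^{C_M\tau}$.

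\textbf{Local error.} Next we need $\|\Phi^{\tau,t_n}(u(t_n))-u(t_{n+1})\|_{H^2}\le C\tau^{p+1}$, where $p=1$ for Lie and $p=2$ for Strang. I would use the Lie-derivative / Gröbner–Alekseev framework adapted to affine flows, as announced in Appendix~\ref{app:lie}. The non-autonomy is handled by appending time as a variable, as described in Section~\ref{sec:non_autonomous}. The exact flow and the split flow are compared through the variation-of-constants formula. For Lie, the error is a single integral of the commutator $[\mathcal{A},\mathcal{B}](w)=\mathcal{A}'(w)\mathcal{B}(w)-\mathcal{B}'(w)\mathcal{A}(w)$, evaluated along the intermediate flows. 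This commutator contains $\Delta$ of the nonlinearity and costs two derivatives, so it needs $v\in H^4$, $\nabla\phi\in H^3$, $V\in H^4$. The term $\partial_tV$ appears through the time-frozen $\mathcal{B}$ flow, which explains the dependence on $\|V\|_{\mathcal{C}^1_TH^2}$. For Strang, the first-order terms cancel by symmetry (a quadrature argument with the midpoint rule), and the remainder involves the double commutators $[\mathcal{A},[\mathcal{A},\mathcal{B}]]$ and $[\mathcal{B},[\mathcal{A},\mathcal{B}]]$, together with $\partial_t^2V$ and $\partial_t$ of commutators. These cost four derivatives, which gives the $H^6$, $\nabla\phi\in H^5$ and $\mathcal{C}^2_TH^6$ assumptions.

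\textbf{Conclusion and main obstacle.} Set $M=2\sup_t\|v(t)\|_{H^2}+1$. Telescoping gives
\[
\|v(t_n)-v^n\|_{H^2}\le\sum_{k<n}e^{C_M(n-k)\tau}C\tau^{p+1}\le C e^{C_MT}T\tau^p .
\]
A bootstrap/induction on $n$ keeps $\|v^n\|_{H^2}\le M$ as long as $\tau\le\tau_{\mathcal{L}}$ or $\tau_{\mathcal{S}}$ is small enough, which closes the argument. The main obstacle I anticipate is the local error for Strang in the affine, non-autonomous setting. There we must rigorously justify the Lie-derivative calculus when $\mathcal{A}$ acts on $\phi$, which is not in $H^k$. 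The fix I would try first is to split $\mathcal{A}(\phi+w)=-i\Delta\phi-i\Delta w$ and to treat $-i\Delta\phi\in H^{k}$ as an inhomogeneous forcing. Then all commutators are computed on $w\in H^k$, and each derivative loss has to be tracked carefully term by term.
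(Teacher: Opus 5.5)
Your proposal is correct and follows essentially the same route as the paper: reduction to $v=u-\phi$ via Lemma~\ref{lem:equivalence_splittings}, isometry of the affine flow $\Phi_A$ together with $e^{C(M_2)\tau}$-Lipschitz stability of $\Phi_B$ in $H^2$, local errors via the Lie-derivative calculus for the affine field $A(v)=-i\Delta v-i\Delta\phi$ (which is exactly your ``forcing'' fix) with time appended as an extra variable, and Lady Windermere's fan closed by an $H^2$ bootstrap. One small correction: \eqref{eq:requirements_phi} only gives $\nabla\phi\in H^3$, so to stay within the stated hypotheses you should bound $e^{-it\Delta}\phi-\phi$ in $H^k$ by $C\sqrt{t}\,\|\nabla\phi\|_{H^{k-1}}$ as in \eqref{eq:continuity_linear_flow_Zhidkov_Hk}, rather than through $\Delta\phi\in H^k$.
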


This result can be complemented by the remarks below.
\begin{remark}
  We state our convergence result in the space $X^2(\R^d)$ thanks to the
  algebra property of Sobolev spaces which holds for $1\leq d \leq 3$, namely
  that for all $f,g \in H^2(\R^d)$,
  \begin{equation} \label{eq:sobolev_algebra}
    \|f g \|_{H^2} \leq C_d \|f \|_{H^2} \|g\|_{H^2},
  \end{equation}
  and the fact that $H^k \subset X^k$ as soon as $k>d/2$ (see for instance
  \cite[Remark 2.2.]{Gerard2006}). This property is extensively used
  throughout this paper without explicit mention. Note that for $d=1$, the space
  $H^1(\R^d)$ would be sufficient for such property to hold, which decreases the
  requirement for space regularity for both $v_0$ and $V$ in Theorem
  \ref{theorem_convergence}, but we have chosen to make the statement as simple as
  possible. On the other hand, generalizations to higher dimensions $d \geq 4$ are
  straightforward, assuming more space regularity on $v_0$ and $V$. We give the
  statement for $1 \leq d \leq 3$ here as it covers our physical motivations.
\end{remark}
\begin{remark}
  One would be tempted to work, based on the Cauchy theory of
  \cite{BethuelSaut1999}, in the framework $1+H^k(\R^d)$ (taking the finite
  energy solution $\phi=1$), which drastically simplifies upcoming computations.
  Unfortunately, the affine space $1+H^k(\R^d)$ does not contain several
  solutions of physical interest such as, for instance, dark solitons
  \eqref{eq:1D_soliton} in dimension $d=1$, or traveling waves of \eqref{eq:GP}
  formed of two parallel vortices with degree $\pm 1$ \cite{BethuelSaut1999} for
  $d=2$. More precisely, we know from \cite{Gravejat2004} that traveling waves
  of finite energy of \eqref{eq:GP} with speed $0 < |c| < \sqrt{2}$ satisfy $u-1
  \notin L^2(\R^2)$ in view of the asymptotic \eqref{eq:2D_soliton}. Hence we
  rather work in the usual framework with the affine space $\phi+H^k(\R^d)$, as
  described above.
\end{remark}
\begin{remark}
  We assume throughout our analysis that the potential $V$ can be
  integrated in time for clearness purposes. If it is not the case,
  one needs to discretize the quantity $\int_{t_n}^{t_{n}+\tau} V(s) \dd s$, which
  appears in the definition of the nonlinear flow
  $\Phi_{\mathcal{B}}^{\tau, t_n}$, by a quadrature rule of same local error as
  the underlying splitting scheme.
\end{remark}

\subsection{A related equation and associated schemes} \label{sec:related_equation}
In view of \cite{Gallo2008}, we consider the decomposition $u=\phi+v$, so that
the function $v(t) \in H^k(\R^d)$ satisfies the equation
\begin{equation} \label{eq:GP_v}
i \partial_t v = i \partial_t u = \Delta v + \Delta \phi + (1-|\phi+v|^2+V)(\phi+v)
\end{equation}
with initial condition $v_0=u_0-\phi \in H^k(\R^d)$. Equation \eqref{eq:GP_v}
now suggests the two-term splitting integration
\[  \partial_t v = A(v) + B(v,t),   \]
where
\[ A(v) = - i \Delta v - i \Delta \phi \quad \text{and} \quad B(v,t) = -i
  (1-|\phi+v|^2+V(t))(\phi+v) \]
and the solutions to the subproblems
\[ \left| \begin{aligned}
& \ i\partial_t w(t) = \Delta w(t) + \Delta \phi, & \quad w(0)=w_0 \in H^k(\R^d), \\
& \ i\partial_t z(t) = (1-|\phi+z(t)|^2 +V(t_0+t))(\phi+z(t))
, & \quad z(0)=z_0 \in H^k(\R^d),\ t_0\in\R,
\end{aligned} \right. \]
for $k>d/2$ and $t \geq 0$. The associated flows are thus given by
\begin{equation} \label{eq:flows_splitting_v}
\left| \begin{aligned}
& \ w(t)=\Phi^t_A(w_0)=e^{-it \Delta} w_0 + e^{-it\Delta}\phi - \phi, \\
& \ z(t)=\Phi^{t,t_0}_B(z_0)=  e^{-it (1-|\phi+z_0|^2)}e^{-i\int_{t_0}^{t_0+t} V(s) \dd s} (\phi+  z_0) -\phi.
\end{aligned} \right.
\end{equation}
for any $t_0 \in \R$. Note that even if $\phi \notin H^k(\R^d)$ for any $k \in
\N$, the flow $\Phi^t_A$ is well defined on $H^k(\R^d)$, as we know from
\cite[Lemma~3]{Gerard2006} that, for any $\phi\in X^k(\R^d)$,
\begin{equation} \label{eq:continuity_linear_flow_Zhidkov}
\| e^{-it\Delta}\phi - \phi \|_{L^2} \leq C \sqrt{|t|} \| \nabla \phi \|_{L^2}.
\end{equation}
Moreover, since the free Schrödinger flow preserves Sobolev norms and
$\nabla\phi\in H^{k-1}(\R^d)$, we also have
\begin{equation} \label{eq:continuity_linear_flow_Zhidkov_Hk}
\| e^{-it\Delta}\phi - \phi \|_{H^{k}} \leq C(k) \sqrt{|t|} \| \nabla \phi
\|_{H^{k-1}}.
\end{equation}
Since the potential is real-valued, we also directly compute that
\begin{equation} \label{eq:preserved_quantity_Phi_B}
  \frac12 \frac{\dd }{\dd t} |\phi+z(t)|^2 = \Re \left(
    \overline{(\phi+z(t))} \partial_t z(t)  \right) = - \Re\left( i
    (1-|\phi+z(t)|^2 +V(t_0+t))|\phi+z(t)|^2 \right)=0,
 \end{equation}
hence the quantity $|\phi+z(t)|^2$ is preserved along the flow
$\Phi^{t,t_0}_B$ which allows for explicit integration. Such flows are related
to the one introduced in Section~\ref{sec:splitting_u} on Zhidkov spaces thanks
to the next Lemma, whose proof simply follows from the definition of each flow.

\begin{lemma} \label{lem:equivalence_splittings}
For any $\varphi \in H^k(\R^d)$ with $k \geq 2$, denoting $\zeta=\phi+\varphi \in X^k(\R^d)$, we have
\[   \Phi_\mathcal{A}^{t}(\zeta)=  \phi + \Phi_A^{t}(\varphi) \quad \text{and}
  \quad \Phi_\mathcal{B}^{t,t_0}(\zeta) = \phi + \Phi_B^{t,t_0}(\varphi) \]
for any $t,t_0\in \R_+$.
\end{lemma}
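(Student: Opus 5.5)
The lemma follows by writing out both flows explicitly and comparing them; the only genuine issue is well-definedness. For the nonlinear part, take $\zeta=\phi+\varphi$ in the formula for $\Phi_\mathcal{B}^{t,t_0}$:
\[
\Phi_\mathcal{B}^{t,t_0}(\zeta)=e^{-it(1-|\phi+\varphi|^2)}e^{-i\int_{t_0}^{t_0+t}V(s)\,\dd s}(\phi+\varphi).
\]
By \eqref{eq:flows_splitting_v}, this is exactly $\phi+\Phi_B^{t,t_0}(\varphi)$. Both expressions are obtained by explicit integration of the corresponding ODEs, which is justified by \eqref{eq:preserved_quantity_Phi_B}. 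One could instead note that $\zeta(t)-\phi$ solves the $z$-equation with data $\varphi$ and invoke uniqueness, but the explicit formulas make this unnecessary. Membership of $\Phi_B^{t,t_0}(\varphi)$ in $H^k$ is not needed for the identity itself; it only matters for $\Phi_B$ to be a flow on $H^k$.

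For the linear part, the goal is $e^{-it\Delta}(\phi+\varphi)=\phi+e^{-it\Delta}\varphi+(e^{-it\Delta}\phi-\phi)$, which is linearity of the operator \eqref{eq:linear_flow_Zhidkov} on $X^k$. The subtle point is that $e^{-it\Delta}\varphi$ for $\varphi\in H^k$ is defined in the paper through the usual Fourier multiplier, whereas on $X^k$ it is defined by the oscillatory integral with the $\delta\to0$ regularization. So I would first check that the two definitions agree on $H^k\subset X^k$ (valid since $k\ge2>d/2$).

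To do this, I would note that for $\varphi\in L^2\cap L^\infty$ the Gaussian-regularized integral equals the convolution of $\varphi$ with the kernel $e^{(i-\delta)|x|^2/(4t)}$ suitably normalized. This kernel is the Fourier multiplier $e^{it(1+i\delta)^{-1}|\xi|^2}$ up to the choice of sign convention. As $\delta\to0$ the multiplier converges to the unitary Schrödinger multiplier, so the convolution converges in $L^2$ by dominated convergence on the Fourier side. The pointwise limit therefore coincides with the $L^2$ limit almost everywhere, which is the standard identification, as in \cite{Gerard2006}.

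Given that identification, linearity of the limit is immediate: whenever the limits for $\phi$ and for $\varphi$ each exist, the limit for their sum exists and splits accordingly. This yields the first identity. Finally, \eqref{eq:continuity_linear_flow_Zhidkov_Hk} guarantees that $e^{-it\Delta}\phi-\phi\in H^k$, so $\Phi_A^t(\varphi)$ does lie in $H^k$.

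The main obstacle is this consistency of the two definitions of the free flow; everything else is bookkeeping. If it needs more care, I would cite \cite{Gallo04,Gerard2006} for the fact that \eqref{eq:linear_flow_Zhidkov} extends the unitary group on $H^k$.
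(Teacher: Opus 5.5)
Your proposal is correct and follows the paper's route. The paper simply says the lemma follows from the definitions of the flows, namely linearity of $e^{-it\Delta}$ and the explicit formula \eqref{eq:flows_splitting_v} for $\Phi_B^{t,t_0}$; your extra check that the regularized integral \eqref{eq:linear_flow_Zhidkov} agrees with the Fourier-defined group on $H^k$ is a point the paper takes for granted.

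One small slip in that check: the Gaussian-regularized kernel has multiplier $e^{-it(1+i\delta)^{-1}|\xi|^2}$, of modulus $e^{-t\delta|\xi|^2/(1+\delta^2)}\leq 1$, and this bound is exactly what your dominated convergence argument needs. The expression you wrote, $e^{it(1+i\delta)^{-1}|\xi|^2}$, has modulus larger than one. The damping coming from $-\delta$ does not depend on any sign convention.
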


With the same notations adopted in Section~\ref{sec:splitting_u}, we define recursively the Lie-Trotter splitting scheme for \eqref{eq:GP_v}
\[  v_L^{n+1}= \Phi_L^{\tau,t_n} (v_L^n) \coloneqq \Phi_B^{\tau,t_n} \circ \Phi_A^\tau (v_L^n), \quad  v_L^0=v_0, \]
as well as the Strang splitting scheme for \eqref{eq:GP_v}
\[  v_S^{n+1}= \Phi_S^{\tau,t_n} (v_S^n) \coloneqq \Phi_A^{\frac{\tau}{2}} \circ \Phi_B^{\tau,t_n} \circ \Phi_A^{\frac{\tau}{2}} (v_S^n), \quad  v_S^0=v_0. \]
Both schemes enjoy the following convergence result in $H^2(\R^d)$.

\begin{proposition} \label{prop:convergence_splitting_v}
  With the same assumptions as in Theorem \ref{theorem_convergence}, it holds
  \[  \forall\ 0 < \tau \leq \tau_L,\quad
    \| v(t_n)-v_L^n \|_{H^2} \leq C_L \tau  \]
  and,
  \[  \forall\ 0 < \tau \leq \tau_S,\quad
    \| v(t_n)-v_S^n \|_{H^2} \leq {C}_S \tau^2, \]
  where the constants $C_L,\tau_L$ depend on $d,T,\|v_0\|_{
      H^4},{\|\phi\|_{X^4}},\|V\|_{\mathcal{C}^0_T H^4}$, and
    $\|V\|_{\mathcal{C}^1_T H^2}$,
    while the constants $C_S,\tau_S$ depend on $d,T,\|v_0\|_{
      H^6},{\|\phi\|_{X^6}},\|V\|_{\mathcal{C}^0_T H^6}$, and
    $\|V\|_{\mathcal{C}^2_T H^2}$.
\end{proposition}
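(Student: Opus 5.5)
The plan is the standard Lady Windermere's fan argument: stability of the numerical flows plus local error bounds, summed through a telescoping decomposition. We write the global error as
\[ v(t_n)-v_L^n = \sum_{k=0}^{n-1} \Big( \Phi_L^{\tau,t_{n-1}}\circ\cdots\circ\Phi_L^{\tau,t_{k+1}}\big(v(t_{k+1})\big) - \Phi_L^{\tau,t_{n-1}}\circ\cdots\circ\Phi_L^{\tau,t_{k+1}}\big(\Phi_L^{\tau,t_k}(v(t_k))\big)\Big), \]
and the same for Strang. Each summand is controlled by the propagated local error $v(t_{k+1})-\Phi_L^{\tau,t_k}(v(t_k))$. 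We need two ingredients, both in $H^2(\R^d)$: a stability estimate for one step and a local error estimate.

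\textbf{Stability.} For $\Phi_A^t$, the affine part $e^{-it\Delta}\phi-\phi$ cancels in differences, so $\|\Phi_A^t(w_1)-\Phi_A^t(w_2)\|_{H^2}=\|w_1-w_2\|_{H^2}$ exactly. For $\Phi_B^{t,t_0}$ we use the explicit formula \eqref{eq:flows_splitting_v}. The phase factor $e^{-i\int V}$ is common to both data and, by the algebra property \eqref{eq:sobolev_algebra}, acts boundedly on $H^2$ with constant $1+C\tau\|V\|_{\mathcal{C}^0_TH^2}$ after expansion. The factor $e^{-it(1-|\phi+z|^2)}$ is Lipschitz in $z$, with constant $1+C(R)t$ on $H^2$-balls of radius $R$, since $1-|\phi+z|^2 = 1-|\phi|^2-2\Re(\bar\phi z)-|z|^2$. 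Here $\phi\in X^2$ multiplies $H^2$ functions boundedly, because $\phi\in L^\infty$ and $\nabla\phi\in H^1$. The term $1-|\phi|^2$ has to be handled as an $L^\infty$ multiplier with derivatives in $H^1$; it is not in $H^2$.

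This yields $\|\Phi_L^{\tau,t_0}(v_1)-\Phi_L^{\tau,t_0}(v_2)\|_{H^2}\le e^{C(R)\tau}\|v_1-v_2\|_{H^2}$, and likewise for Strang. We use it inside the standard bootstrap. The exact solution $v$ stays in a ball of $H^2$ of radius $R_0$ by Propositions \ref{prop:Cauchy}--\ref{prop:high_reg}. We set $R=2R_0$ and show inductively that the numerical iterates remain in this ball as long as $C\tau^p\le R_0$. This is where $\tau_L$ and $\tau_S$ come from.

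\textbf{Local error.} This is the main obstacle. We intend to use the Lie-derivative / variation-of-constants approach adapted to the affine flow $\Phi_A$ and the time-dependent $B$. For Lie, write
\[ v(t_n+\tau)-\Phi_L^{\tau,t_n}(v(t_n)) = \int_0^\tau\!\!\int_0^{s}(\ldots)\,[A,B](\cdot)\,\dd\sigma\,\dd s + \int_0^\tau(\ldots)\,\partial_tB\,\dd s. \]
Here $[A,B](v)=B'(v)A(v)-A'(v)B(v)$ is the commutator. It involves $\Delta$ acting on $(1-|\phi+v|^2+V)(\phi+v)$ and $B'(v)[\Delta v+\Delta\phi]$. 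The second-order derivatives cancel, but only partially, so one still needs $v\in H^4$ and $V\in H^4$ to bound $[A,B](v)$ in $H^2$. The $\Delta\phi$ terms are fine since $\nabla\phi\in H^3$. The nonautonomous piece brings in $\partial_t V\in H^2$. This gives a local error $C\tau^2$ uniformly, with constants depending on $\|v\|_{\mathcal{C}^0_TH^4}$.

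For Strang, the symmetric quadrature cancels the first-order commutator term. The remainder involves the double commutators $[A,[A,B]]$ and $[B,[A,B]]$ and $\partial_t^2V$ terms, which requires $H^6$ regularity of $v$ and $V$ and $\nabla\phi\in H^5$. This gives a local error $C\tau^3$.

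The delicate points we expect are, first, rigorously justifying these expansions for the affine (rather than linear) flow, where $\Phi_A'$ is the linear group and the extra $\Delta\phi$ term must be tracked; and second, the time-dependence, handled by freezing $t$ and adding the extra quadrature error. Summing $n\le T/\tau$ local errors with stability factors $e^{C T}$ then gives $C_L\tau$ and $C_S\tau^2$.
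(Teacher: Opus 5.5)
Your plan is essentially the paper's proof. It has the same three ingredients: one-step $H^2$ stability (exact isometry for $\Phi_A^\tau$, an $e^{\tau C(M_2)}$-Lipschitz bound for $\Phi_B^{\tau,t_n}$), local errors $C\tau^2$ and $C\tau^3$ from the Lie-derivative/commutator analysis with $H^4$ resp.\ $H^6$ regularity, and a bootstrap keeping the iterates in an $H^2$ ball. The paper runs this as the one-step recursion $\|v(t_{n+1})-v^{n+1}\|_{H^2}\le C\tau^{p+1}+e^{\tau C(M_2)}\|v(t_n)-v^n\|_{H^2}$ ($p=1$ Lie, $p=2$ Strang), which, unlike your full telescoping sum, only needs $v(t_n)$ and $v^n$ in the ball; it handles time dependence by autonomizing $(v,t)$ rather than freezing $t$.

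One correction: your claim that $1-|\phi|^2$ is not in $H^2$ is wrong. Under \eqref{eq:requirements_phi} one has $1-|\phi|^2\in L^2$ and $\nabla(1-|\phi|^2)=-2\Re(\overline{\phi}\nabla\phi)\in H^3$, so $1-|\phi|^2\in H^4$. This is essential, because it is what puts $B(0)=-i(1-|\phi|^2+V)\phi$ in $H^2$, which the local-error terms involving $B$ rely on. Your weaker multiplier treatment is enough only for the stability step.
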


Thanks to Lemma~\ref{lem:equivalence_splittings}, convergence on $(v_L^n)_{n\in\N}$ and
$(v_S^n)_{n\in\N}$ in Sobolev spaces implies in turn convergence of
$(u_\mathcal{L}^n)_{n\in\N}$ and $(u_\mathcal{S}^n)_{n\in\N}$ in Zhidkov spaces,
hence a large part of this paper (namely Section \ref{sec:stability} and
\ref{sec:local_error}) is mainly devoted to the proof of Proposition
\ref{prop:convergence_splitting_v}.

We follow the strategy initiated by \cite{Lubich2008} for nonlinear
Schrödinger-type equations, as it does not rely on underlying conservation laws,
which may not be available in the case of \eqref{eq:GP_v}. In particular, there
is no preservation of the $L^2(\R^d)$-norm for \eqref{eq:GP_v}, and the
energy balance law does not control the
$\dot{H}^1(\R^d)$-norm of the solution, unlike standard defocusing nonlinear
Schrödinger models. We also adopt the abstract framework of Lie derivatives from
\cite{kochErrorAnalysisHighorder2013,Lubich2008}, which allows to reformulate
local errors of splitting schemes in terms of quadrature errors of integrals in
a very elegant way. A direct proof based on usual Taylor expansions, as for
instance performed in \cite{SuZhao2020}  for Lie and Strang splittings for
oscillatory nonlinear Schrödinger equations with a time-dependent potential,
would also be possible but would lead to tedious computations due to the
numerous terms appearing in the nonlinear flow $\Phi_B^{t,t_0}$, which would
ultimately suffer the readability of the paper. We end this section with a
number of important remarks justifying some of the choices we made.

\begin{remark}
One may be tempted to work directly on the splitting schemes associated with
\eqref{eq:GP} in order to prove Theorem \ref{theorem_convergence}. In fact, we
know from \cite[Theorem 1.1]{Gallo2006} that the following dispersive estimate holds on
Zhidkov spaces
\[
  \| e^{-it \Delta} \varphi \|_{X^k} \leq C \left(1+|t|^{\gamma} \right) \|
  \varphi \|_{X^k}, \quad \text{with} \  \gamma = \left\{ \begin{aligned}
				& \frac12  \frac{1}{1+1/d} & \quad \text{if $d$ is even}, \\
        & \frac{1}{4}  \frac{1}{1+1/(2d)} & \quad \text{if $d$ is odd},
      \end{aligned} \right.  \]
and with the constant $C=2$ if ones follows the computations of
\cite{Gallo2006,Gallo2008}. It turns out that this constant accumulates in the
geometric sum of the proof of the convergence estimate of Section
\ref{sec:local_error}, preventing us to conclude thanks to Lady Windermere's Fan
usual argument. Thus we rather work on the related equation \eqref{eq:GP_v} as
the linear Schrödinger flow $e^{-it\Delta}$ defines an isometry on standard
Sobolev spaces $H^s(\R^d)$ for any $s \in \R$, and thus the affine flow $\Phi_A^t$
is well suited thanks to \eqref{eq:continuity_linear_flow_Zhidkov}.
\end{remark}

\begin{remark} \label{lem:affine_flow_to_linear}
Note that the affine flow $\Phi_A^t$ is linear if and only if the finite
energy function $\phi$ is a constant function $c_0$ (with $|c_0|=1$ due to
boundary conditions at infinity). It is in fact an isometry on every
$H^s(\R^d)$ in this very case and, actually, one can compute (see
Appendix \ref{app:zhidkov}) that
\[  e^{it \Delta} c_0 =\frac{e^{-id \pi/4}}{\pi^{d/2}} \lim_{\delta  \to 0}
  \int_{\R^d} e^{(i-\delta )|z|^2} c_0 \dd z = c_0.  \]
\end{remark}

\begin{remark}
A maybe even more natural splitting integration for equation \eqref{eq:GP_v}
would be a three-term splitting, with solutions to the subproblems
\[ \left| \begin{aligned}
      & \ i\partial_t f = \Delta f; \quad f(0)=f_0; \quad
      \Phi_\mathfrak{A}^t(f_0)=e^{-it\Delta}f_0, \\
      & \ i \partial_t g = (1-|\phi+g|^2+V(t_0+\cdot)g)(\phi+g); \quad
      g(0)=g_0, t_0 \in \R; \quad
      \Phi_\mathfrak{B}^{t,t_0}(g_0)=\Phi_B^{t,t_0}(g_0), \\
      & \ i \partial_t h=\Delta \phi       ; \quad h(0)=h_0; \quad
      \Phi_\mathfrak{C}^t(h_0)=h_0-it\Delta \phi.
    \end{aligned} \right. \]
The Schrödinger flow $\Phi_\mathfrak{A}^t$ has now the advantage of being linear
and to satisfy the usual isometry properties on Sobolev spaces, while one can
easily show that continuity and stability properties hold for the flow
$\Phi_\mathfrak{C}^t$, as from \eqref{eq:continuity_linear_flow_Zhidkov_Hk} we
infer
\[ \| \Phi_\mathfrak{C}^t(h) \|_{H^k} \leq \|h\|_{H^k}+t
  \|\Delta\phi\|_{H^k} \quad \text{and} \quad   \| \Phi_\mathfrak{C}^t(f) -
  \Phi_\mathfrak{C}^t(g) \|_{H^k} \leq \| f-g \|_{H^k}\]
for any $k \in \N^*$. For instance, a possible Lie splitting would then writes
\[ \Phi^{\tau,t}_\mathfrak{L}(\varphi)\coloneqq \Phi^\tau_\mathfrak{A}\circ
  \Phi^{\tau,t}_\mathfrak{B} \circ \Phi^\tau_\mathfrak{C} (\varphi).   \]
A major drawback of such three-term splitting integration comes from the fact
that there is no equivalent of Lemma~\ref{lem:equivalence_splittings}
anymore, in the sense that if $\zeta=\phi+\varphi$, in view of Remark
\ref{lem:affine_flow_to_linear} above,
\[ \Phi^{\tau,t}_\mathcal{L}(\zeta) \neq
  \phi+\Phi^{\tau,t}_\mathfrak{L}(\varphi) \quad \text{if} \quad \phi \neq c_0
  \in \C.  \]
Thus we do not pursue into that direction, and we restrict our attention to the
\textit{structure-preserving} splittings $\Phi_L$ and $\Phi_S$ defined
previously.
\end{remark}

\section{Continuity properties} \label{sec:stability}

We rely for the upcoming analysis on the inequalities
\begin{equation} \label{eq:product_Zhidkov}
  \|\phi \varphi \|_{H^k} \leq C_d  \|\phi\|_{X^k} \| \varphi \|_{H^k}
  \quad \text{and} \quad \| \phi+\varphi \|_{X^k} \leq \| \phi \|_{X^k} + C_d
  \|\varphi \|_{H^k}
\end{equation}
for any $k>d/2$, to estimate the product and the sum of a function $\phi \in X^k(\R^d)$ with
a function $ \varphi  \in H^k(\R^d)$, which can be checked directly recalling that $H^k(\R^d) \subset X^k(\R^d) \subset L^{\infty}(\R^d)$ as $k>d/2$.

\subsection{Propagation of high regularity} If $V=0$, the results from
\cite{DeLaire2010,Gallo2008}
provide that for $v_0~\in~H^1(\R^d)$, there exists a unique solution $v \in
\mathcal{C}^0(\R;H^1(\R^d))$ of \eqref{eq:GP_v}. We briefly extend such global
wellposedness results to the case with a potential, highlighting the main
novelties of the proof.

\begin{proposition} \label{prop:Cauchy}
Let $T>0$. Let $v_0 \in H^1(\R^d)$, $V \in \mathcal{C}^0(\left[0,T\right]; H^1(\R^d))$ and $\partial_t V \in \mathcal{C}^0(\left[0,T\right]; L^2(\R^d))$. Then there exists a unique solution $v \in \mathcal{C}^0(\left[0,T\right]; H^1(\R^d))$ of \eqref{eq:GP_v}.
\end{proposition}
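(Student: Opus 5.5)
We follow the strategy of \cite{Gallo2008,DeLaire2010}: first local well-posedness in $H^1(\R^d)$ by a fixed point on the Duhamel formulation, then global existence on $[0,T]$ via an a priori bound coming from the energy balance law. We write \eqref{eq:GP_v} as
\[ v(t) = e^{-it\Delta}v_0 + \left(e^{-it\Delta}\phi-\phi\right) - i\int_0^t e^{-i(t-s)\Delta} F(v(s),s)\,\dd s, \qquad F(v,s)=(1-|\phi+v|^2+V(s))(\phi+v). \]
The term $e^{-it\Delta}\phi-\phi$ belongs to $\mathcal{C}^0(\R;H^1)$ by \eqref{eq:continuity_linear_flow_Zhidkov_Hk} with $k=1$.

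\textbf{Local existence.} The nonlinearity without potential, $(1-|\phi+v|^2)(\phi+v)$, is handled exactly as in \cite{Gallo2008}. One expands it in powers of $v$. The source term $(1-|\phi|^2)\phi$ lies in $H^1$ since $|\phi|^2-1\in L^2$ and $\nabla\phi\in H^3$. The remaining terms are at most cubic in $v$, with coefficients in $X^3$, and are estimated with Strichartz estimates in $L^\infty_tH^1\cap L^q_tW^{1,r}$ for $d\le 3$. The new term is $V(s)(\phi+v)$. Since $d\le3$ and $V(s)\in H^1$, the product estimates give
\[ \|V\phi\|_{H^1}\le C\|V\|_{H^1}\|\phi\|_{X^1\cap L^\infty}, \qquad \|Vv\|_{L^2}\le \|V\|_{L^3}\|v\|_{L^6}. \]
For the gradient, $\nabla(Vv)=v\nabla V+V\nabla v$. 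The term $V\nabla v$ is bounded by $\|V\|_{L^\infty}$ in $H^1$ only if $d=1$. For $d=2,3$ I will instead place $V\nabla v$ and $v\nabla V$ in dual Strichartz spaces, for instance $L^1_tL^2$ using $V\in L^\infty_t L^3$ together with $v\in L^q_tW^{1,r}$, or via Sobolev embedding $H^1\subset L^6$. In all cases $V(\phi+v)$ is a Lipschitz perturbation on bounded sets, with constants depending on $\|V\|_{\mathcal{C}^0_TH^1}$. A contraction argument then gives a unique maximal solution $v\in\mathcal{C}^0([0,T^*);H^1)$, together with the blow-up alternative: if $T^*<T$, then $\|v(t)\|_{H^1}\to\infty$. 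Uniqueness follows from the same Lipschitz estimates.

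\textbf{A priori bound (main obstacle).} As noted in the paper, neither mass nor energy controls $\|v\|_{H^1}$ directly, and the energy is no longer conserved because $V$ depends on time. The plan has three steps.

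First, I justify the balance law
\[ \frac{\dd}{\dd t}\mathcal{E}(u(t),t)=\int\partial_tV(1-|u|^2), \qquad u=\phi+v, \]
by regularization: approximate $v_0$ and $V$ by smoother data, use propagation of regularity, and pass to the limit via continuous dependence. The right-hand side is bounded by $\|\partial_tV\|_{L^2}\,\||u|^2-1\|_{L^2}$. The potential term in $\mathcal{E}$ is bounded by $\|V\|_{L^2}\||u|^2-1\|_{L^2}$, which we absorb by Young's inequality into $\frac14\int(1-|u|^2)^2$. This gives
\[ \mathcal{G}(t):=\int|\nabla u|^2+\tfrac14\int(1-|u|^2)^2 \le C+C\int_0^t(1+\mathcal{G}(s))\,\dd s, \]
so Gronwall bounds $\mathcal{G}$ on $[0,T]$ in terms of $\|V\|_{\mathcal{C}^0_TL^2}$ and $\|\partial_tV\|_{\mathcal{C}^0_TL^2}$. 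In particular $\|\nabla v\|_{L^2}$ is bounded.

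Second, I control $\|v\|_{L^2}$, which is the delicate step. I compute
\[ \frac{\dd}{\dd t}\|v\|_{L^2}^2=2\Re\int\bar v\,\partial_tv. \]
After integrating by parts, the Laplacian contributes $\Re\int -i\bar v\Delta\phi$, which is bounded by $\|v\|_{L^2}\|\Delta\phi\|_{L^2}$. The nonlinear contribution involves $(1-|u|^2)\Im(\bar v\phi)$, bounded by $\|1-|u|^2\|_{L^2}\|v\|_{L^2}\|\phi\|_{L^\infty}$, which is already controlled. The potential term is $\Re(-i\int V|\cdot|^2)$ applied to $\bar v(\phi+v)$, leaving $\int V\,\Im(\bar v\phi)$, bounded by $\|V\|_{L^\infty}$ or by $\|V\|_{L^2}\|v\|_{L^2}\|\phi\|_{L^\infty}$. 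Altogether this gives
\[ \frac{\dd}{\dd t}\|v\|_{L^2}^2\le C(1+\|v\|_{L^2}^2), \]
so Gronwall yields exponential growth at worst on $[0,T]$.

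Combining the two bounds, $\|v(t)\|_{H^1}$ stays bounded on $[0,T^*)$. By the blow-up alternative, $T^*\ge T$, which completes the proof.
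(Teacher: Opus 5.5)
Your proposal is correct and follows the same route as the paper. Local well-posedness comes from a Strichartz/fixed-point argument in which the potential is a harmless linear perturbation. The energy balance law with Young's inequality then bounds $\nabla v$, and the multiplier $\bar v$ gives $\frac{\dd}{\dd t}\|v\|_{L^2}^2\leq C(1+\|v\|_{L^2}^2)$.

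Your Gronwall treatment of the time-integrated $\partial_t V$ term, together with the regularization you use to justify the balance law, is slightly more careful than the paper's direct absorption of that term. The one slip is the bound $\|V\phi\|_{H^1}\leq C\|V\|_{H^1}\|\phi\|_{X^1\cap L^\infty}$: for $d=2,3$ it also needs $\nabla\phi\in L^\infty$, which is available from $\nabla\phi\in H^3$.
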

\begin{proof}
Local existence and uniqueness for solutions to \eqref{eq:GP_v} can be proven
classically applying \cite[Theorem 4.4.6.]{Cazenave2003} thanks to a combination
of Strichartz estimates, Sobolev embeddings and Banach's fixed point theorem, as
the presence of the linear potential $V$ is somehow harmless, provided that $V
\in \mathcal{C}^0(\left[0,T\right]; H^1(\R^d))$ (see for instance
\cite[Section~4]{DeLaire2010}). Thus we focus on extending such solutions
globally on $\left[0,T\right]$.

From the energy balance law of the Ginzburg-Landau energy \eqref{eq:GL} we have
\begin{equation} \label{eq:dissipation}
\mathcal{E}(\phi+v(t),t) + \int_0^t \int_{\R^d} \partial_t V(s)
(|\phi+v(t)|^2-1) \dd s = \mathcal{E}(\phi+v_0).
\end{equation}
From a combination of Cauchy-Scwharz and Young inequality, we write
\[ \left| \int_{\R^d} V(t) (1-|\phi+v(t)|^2) \right| \leq \| V(t) \|_{L^2} \|
  1-|\phi+v(t)|^2 \|_{L^2} \leq 8 \| V(t) \|_{L^2}^2 +  \frac{1}{8} \|
  1-|\phi+v(t)|^2 \|_{L^2}^2. \]
As the same holds with $\partial_t V$ instead of $V$, we infer from \eqref{eq:dissipation} that
\[  \sup_{t \in \left[0,T\right]} \int_{\R^d} | \nabla \phi + \nabla v(t)|^2 +
  \frac{1}{4} \sup_{t \in \left[0,T\right]} \int_{\R^d}   (1-|\phi+v(t)|^2)^2
  \leq C(T,\mathcal{E}(\phi+v_0),\| V \|_{\mathcal{C}^0_T L^2},\| \partial_t V
  \|_{\mathcal{C}^0_T L^2} ). \]
Note that both quantities in the left hand side of previous inequality are non
negative. In particular from the bound on $\|\nabla \phi + \nabla v(t)
\|_{\mathcal{C}^0_T L^2} $, with a similar use of Cauchy-Scwharz and Young
inequality we infer
\[ \sup_{t \in \left[0,T\right]} \int_{\R^d}| \nabla v(t)|^2 \leq
  C(T,\mathcal{E}(\phi+v_0), \| V \|_{\mathcal{C}^0_T L^2},\| \partial_t V
  \|_{\mathcal{C}^0_T L^2}, \| \nabla \phi \|_{L^2}).\]
This provides a bound on $\| \nabla v \|_{\mathcal{C}^0_T L^2}$.

On the other hand we get a bound on $\|1-|\phi+v(t)|^2 \|_{\mathcal{C}^0_T L^2}
$. Multiplying equation \eqref{eq:GP_v} by $\overline{v}$, integrating on
$\R^d$ and taking the imaginary part, we have
\[ \frac{1}{2} \frac{\dd }{\dd t} \| v(t) \|_{L^2}^2 =\Re \int_{\R^d}
  \overline{v} \Delta \phi + \Re \int_{\R^d} \overline{v} (1-|\phi+v(t)|^2+V(t))
  \phi, \]
hence from Cauchy-Schwarz inequality
\[ \frac{1}{2} \frac{\dd }{\dd t} \| v(t) \|_{L^2}^2 \leq  \| \Delta \phi
  \|_{L^2} \| v(t) \|_{L^2}  + C  \| v(t) \|_{L^2}  \]
with $C=C(T,\mathcal{E}(\phi+v_0),\| V \|_{\mathcal{C}^0_T L^2},\| \partial_t V
\|_{\mathcal{C}^0_T L^2}, \|\phi \|_{L^{\infty}} )$. This provides a bound on $\| v \|_{\mathcal{C}^0_T L^2}$, which ends the proof.
\end{proof}

Given additional regularity on the initial condition $v_0$, the potential $V$
and the finite energy solution $\phi$, higher regularity can be propagated
through time for \eqref{eq:GP_v}. Such properties are known as
\textit{persistence of regularity} results in the literature, see for instance
\cite[Proposition 3.11]{Tao2006}. We state and prove such a result for the sake
of completeness, as Theorem \ref{theorem_convergence} and Proposition
\ref{prop:convergence_splitting_v} require higher regularity.

\begin{proposition} \label{prop:high_reg}
  Let $k>d/2$, $v_0 \in H^k(\R^d)$, $\nabla \phi \in H^{k-1}(\R^d)$ with $\phi$
  satisfying \eqref{eq:requirements_phi} and
  $V~\in~\mathcal{C}^0(\left[0,T\right]; H^k(\R^d))$. Then the unique solution
  $v$ to \eqref{eq:GP_v} satisfies $v \in \mathcal{C}^0(\left[0,T\right];
  H^k(\R^d))$.
\end{proposition}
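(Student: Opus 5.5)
We prove persistence of regularity by a Gronwall-type energy estimate in $H^k$ that is \emph{linear} in the highest norm, with coefficients controlled by the $H^1$-level bounds of Proposition~\ref{prop:Cauchy} together with an $L^\infty$ bound. First, local well-posedness in $H^k$ for $k>d/2$ follows from a standard fixed point in $\mathcal{C}^0([0,T_*],H^k(\R^d))$ on the Duhamel formulation
\[ v(t)=e^{-it\Delta}v_0 + \left(e^{-it\Delta}\phi-\phi\right) - i\int_0^t e^{-i(t-s)\Delta}\big(1-|\phi+v|^2+V\big)(\phi+v)(s)\,\dd s . \]
Here $e^{-it\Delta}$ is an isometry on $H^k$, the term $e^{-it\Delta}\phi-\phi$ lies in $H^k$ by \eqref{eq:continuity_linear_flow_Zhidkov_Hk} since $\nabla\phi\in H^{k-1}$, and the nonlinearity is locally Lipschitz on $H^k$ thanks to \eqref{eq:product_Zhidkov} and the algebra property. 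The key point is that $1-|\phi|^2\in H^k$, which holds because $1-|\phi|^2\in L^2$ and its derivatives are products of $\nabla\phi\in H^{k-1}$ with $\phi\in X^k$. The local time $T_*$ depends only on $\|v_0\|_{H^k}$. By uniqueness, this $H^k$ solution coincides with the $H^1$ solution of Proposition~\ref{prop:Cauchy}, so it suffices to exclude blow-up of $\|v(t)\|_{H^k}$ on $[0,T]$.

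For the a priori bound, write $N(v)=(1-|\phi+v|^2+V)(\phi+v)$ and use the Duhamel formula above. For every $t\in[0,T]$,
\[ \|v(t)\|_{H^k}\le \|v_0\|_{H^k}+C\sqrt{T}\|\nabla\phi\|_{H^{k-1}}+\int_0^t\|N(v(s))\|_{H^k}\,\dd s . \]
The main step is a \emph{tame} estimate, obtained from Moser / Kato--Ponce product estimates in $H^k$:
\[ \|N(v)\|_{H^k}\le C\big(\|\phi\|_{X^k},\|V\|_{\mathcal{C}^0_TH^k},\|v\|_{L^\infty}\big)\,\big(1+\|v\|_{H^k}\big). \]
The idea is to expand $N(v)$ into products of $\phi$, $\bar\phi$, $v$, $\bar v$, $V$. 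Each term containing at least one factor in $H^k$ is then bounded by the $H^k$ norm of a single factor times $L^\infty$ norms of the others, with the usual handling of derivatives falling on $\phi$ via $\nabla\phi\in H^{k-1}$. The only term with no $H^k$ factor is $(1-|\phi|^2)\phi$, which is treated as above. Given this estimate and a uniform bound on $\|v\|_{L^\infty}$, Gronwall yields $\sup_{[0,T]}\|v\|_{H^k}<\infty$, and the blow-up alternative gives the result.

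The hard step is controlling $\|v(t)\|_{L^\infty}$ uniformly on $[0,T]$, since $H^1\not\subset L^\infty$ for $d=2,3$. I would handle this inductively in the regularity. For $d=1$, $H^1\subset L^\infty$ and Proposition~\ref{prop:Cauchy} suffices directly. For $d=2,3$, I would first propagate $H^2$ regularity. To do so, estimate $\partial_t v$ in $L^2$ by differentiating \eqref{eq:GP_v} in time, which gives an equation whose nonlinear part is linear in $\partial_t v$ with coefficients involving $|\phi+v|^2$ and $V$. Then combine this with Strichartz estimates, which give $L^\infty$ control of $v$ in an $L^p_t$ sense from the $H^1$ bounds, in the spirit of the classical proof of global $H^2$ regularity for defocusing cubic NLS (Kato's argument). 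Finally, recover $\|\Delta v\|_{L^2}$ from the equation itself, using $\partial_tV\in \mathcal{C}^0_TL^2$, which is available from $V\in\mathcal{C}^1$ in the applications. Once $v\in \mathcal{C}^0_TH^2\subset L^\infty$ for $d\le 3$, the tame estimate above holds with bounded coefficients and the Gronwall argument extends regularity to every $k>d/2$. Alternatively, a logarithmic Brezis--Gallouët inequality applied to the tame estimate yields a double-exponential bound without the $H^2$ detour. I expect this $L^\infty$ control step to be the main obstacle.
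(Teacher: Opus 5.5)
Your proposal is correct in outline and shares the paper's skeleton: the Duhamel formula in $H^k$, a tame bound on the nonlinearity using $1-|\phi|^2\in H^k$, and Gronwall. The difference lies entirely in how $\|v\|_{L^\infty}$ is handled.

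You ask for a \emph{uniform} bound and obtain it by first propagating $H^2$ regularity through Kato's time-differentiation argument. The paper only needs time integrability, since Gronwall gives
\[ \|v(t)\|_{H^k}\le \big(\|v_0\|_{H^k}+C_{T,\phi,V}\big)\exp\Big(C\int_0^t\|v(s)\|_{L^\infty}^2\,\dd s\Big). \]
Finiteness of this integral on $[0,T]$ is exactly what the $H^1$-level Strichartz theory supplies, and this is what the appeal to Cazenave covers. With the $H^1$ bound of Proposition~\ref{prop:Cauchy}, $v\in L^q([0,T];W^{1,r}(\R^d))$ for admissible pairs. Taking $r=4>d$, i.e.\ $(q,r)=(4,4)$ for $d=2$ and $(8/3,4)$ for $d=3$, Sobolev embedding then gives $v\in L^2([0,T];L^\infty)$.

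You already need space-time control of this type to close your estimate on $\partial_t v$, since the terms $|v|^2\partial_t v$ and $\partial_t V\,(\phi+v)$ in the differentiated equation require it. So you could insert $\int_0^T(1+\|v\|_{L^\infty})^2\,\dd s<\infty$ directly into your tame estimate and skip the $H^2$ step altogether. That step does buy a uniform-in-time $H^2$ (hence $L^\infty$) bound, which is slightly more information. The cost is a rigorous justification of the time-differentiated equation and an explicit use of $\partial_t V$, which is not among the hypotheses of this proposition and only enters through Proposition~\ref{prop:Cauchy}. The paper's route is shorter and needs nothing on $\partial_t V$ beyond what the global $H^1$ bound already uses.

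Finally, your Brezis--Gallou\"et alternative works for $d\le 2$ but not for $d=3$. There the logarithmic inequality requires control of $\|v\|_{H^{3/2}}$, which the $H^1$ theory does not provide.
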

\begin{proof}
  Let us first compute from general multivariate Leibniz rule that
  \[   \partial^{\alpha} \left(  1-|\phi|^2\right)=- \sum_{\beta \leq \alpha,
      |\beta| \geq 1} \binom{\alpha}{\beta} \partial^{\beta} \phi \
    \partial^{\alpha-\beta} \overline{\phi} \]
  for any $d$-tuple $\alpha$ such that $|\alpha| \geq 1$, thus we infer as $H^1(\R^d) \subset L^4(\R^d)$ for $1 \leq d \leq 3$ that
  \[ \| 1-|\phi|^2 \|_{H^k} \leq \| 1-|\phi|^2 \|_{L^2} + C_{d,k} \left( \| \nabla
      \phi \|_{H^{k-1}}^2 + \| \nabla \phi \|_{H^{k-1}} \| \phi \|_{L^{\infty}}
    \right)  \]
  where the constant $C_{d,k}>0$ is independent of $\phi$. From Duhamel's
  formula for~\eqref{eq:GP_v}, we write
  \[ v(t) = e^{-it\Delta} v_0 + \left( e^{-it\Delta}\phi - \phi \right) -i
    \int_0^t e^{-i(t-s)\Delta} \left( g(s,v(s)) \right) \dd s, \]
  where $g=g_0+g_1+g_2+g_3$ with
  \[  \left| \begin{aligned}
        & g_0 =(1-|\phi|^2 + V) \phi, \\
        & g_1  = -2\Re(\overline{\phi} v) \phi+ (1-|\phi|^2+V) v, \\
        & g_2 = -|v|^2 \phi - 2\Re (\overline{\phi}v)v , \\
        & g_3 = -|v|^2 v,
      \end{aligned} \right. \]
  we infer by isometry of the linear flow and from both
  \eqref{eq:continuity_linear_flow_Zhidkov} and \eqref{eq:product_Zhidkov} that
  \[  \| v(t) \|_{H^k} \leq \| v_0 \|_{H^k} + C \sqrt{t} \| \nabla \phi
    \|_{H^{k-1}} + C_{\phi,V} \int_0^t\left(1+ \| v(s) \|_{L^\infty}^2 \| v(s)
      \|_{H^k} \right) \dd s,  \]
  where $C_{\phi,V}=C(\| \phi \|_{L^\infty}, \| \nabla \phi \|_{H^{k-1}}, \| V
  \|_{\mathcal{C}^0_T H^k } )>0$. From Gronwall's lemma, we get
  \[ \| v(t) \|_{H^k} \leq ( \| v_0 \|_{H^k} + C_{T,\phi,V} ) \exp \left( C
      \int_0^t \| v(s) \|_{L^{\infty}}^2 \dd s  \right)  \]
  for all $0 \leq t \leq T$, hence $v \in \mathcal{C}^0(\left[0,T\right];
  H^k(\R^d))$ performing similarly to the proof of \cite[Theorem
  5.5.1]{Cazenave2003}.
\end{proof}

\subsection{Stability estimates}
We now give some continuity and stability estimates for the flows
\eqref{eq:flows_splitting_v}. First, thanks to
\eqref{eq:continuity_linear_flow_Zhidkov} and by isometry we have, for any
  $k \in \N$, $t\geq 0$ and any
$\varphi, f,g \in H^k(\R^d)$,
\begin{equation} \label{eq:prop_continuity_A}
  \| \Phi_A^t (\varphi) \|_{H^k} \leq  \| \varphi\|_{H^k} + C \sqrt{t} \|
  \nabla \phi\|_{H^{k-1}} \quad \text{and} \quad  \| \Phi_A^t (f)- \Phi_A^t (g)
  \|_{H^k} =  \| f-g\|_{H^k}.
\end{equation}
We are now going to prove analogous properties for the nonlinear flow $\Phi_B^{t,t_0}$.

\begin{proposition} \label{prop_continuity}
    Let $\varphi \in H^k(\R^d)$ for $k >d/2$, then for all $t_0 \in [0,T]$ and
    $0\leq t \leq T-t_0$, we have
    \[  \| \Phi_B^{t,t_0} (\varphi) \|_{H^2} \leq \exp(C t), \]
    where $C>0$ depends on $\| \varphi\|_{H^k}$, $\| \phi\|_{X^k}$ and $\| V \|_{\mathcal{C}^0_T H^k}$.
\end{proposition}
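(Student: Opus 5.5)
We propose to work from the explicit formula \eqref{eq:flows_splitting_v} rather than from an energy estimate. Write
\[ \Phi_B^{t,t_0}(\varphi) = e^{-i\theta(t)}(\phi+\varphi) - \phi = \left(e^{-i\theta(t)}-1\right)\phi + e^{-i\theta(t)}\varphi, \qquad \theta(t) \coloneqq t\left(1-|\phi+\varphi|^2\right) + \int_{t_0}^{t_0+t} V(s)\,\dd s. \]
The first step is to show that the real phase $\theta(t)$ lies in $H^k(\R^d)$ with $\|\theta(t)\|_{H^k} \leq C_0 t$. Here $C_0$ depends only on $\|\varphi\|_{H^k}$, $\|\phi\|_{X^k}$ and $\|V\|_{\mathcal{C}^0_T H^k}$. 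To see this, expand
\[ 1-|\phi+\varphi|^2 = (1-|\phi|^2) - 2\Re(\overline{\phi}\varphi) - |\varphi|^2 . \]
The term $1-|\phi|^2$ is bounded in $H^k$ by the Leibniz-rule estimate established in the proof of Proposition~\ref{prop:high_reg}. The two remaining terms are handled by \eqref{eq:product_Zhidkov} together with the algebra property. The time integral of $V$ is bounded in $H^k$ by $t\|V\|_{\mathcal{C}^0_T H^k}$.

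The second step is to control the exponentials. Since $e^{-i\theta}-1 = \sum_{m\geq 1} (-i\theta)^m/m!$ and $H^k$ is an algebra for $k>d/2$ (with constant $C_d$), we get
\[ \|e^{-i\theta}-1\|_{H^k} \leq \sum_{m\geq1} \frac{C_d^{m-1}\|\theta\|_{H^k}^m}{m!} \leq C_d^{-1}\left(e^{C_d C_0 t}-1\right). \]
Writing $e^{-i\theta}\varphi = \varphi + (e^{-i\theta}-1)\varphi$ then gives
\[ \|e^{-i\theta}\varphi\|_{H^k} \leq \|\varphi\|_{H^k}\, e^{C_d C_0 t}. \]
For the $\phi$-term, we use the first inequality of \eqref{eq:product_Zhidkov}:
\[ \|(e^{-i\theta}-1)\phi\|_{H^k} \leq C_d\|\phi\|_{X^k}\,\|e^{-i\theta}-1\|_{H^k} \leq \|\phi\|_{X^k}\left(e^{C_d C_0 t}-1\right). \]
Here we rely on the fact that $\phi$ itself is not in $H^k$ but multiplies an $H^k$ function.

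Summing the two contributions gives a bound of the form $(1+\|\varphi\|_{H^k}+\|\phi\|_{X^k})\,e^{Ct}$. Since $H^k\subset H^2$ when $k\geq 2$, this controls the $H^2$ norm in the statement. The prefactor, which depends on the same quantities, can be absorbed into the constant. Strictly speaking, a bound literally of the form $\exp(Ct)$ with value $1$ at $t=0$ cannot hold when $\|\varphi\|_{H^2}>1$. I would therefore state the conclusion as $\|\Phi_B^{t,t_0}(\varphi)\|_{H^k}\leq C_1 e^{Ct}$, or as $\|\varphi\|_{H^k} e^{Ct} + C(e^{Ct}-1)$, which is the form actually needed later for the Lady Windermere argument.

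The main obstacle is the first step: showing that $1-|\phi+\varphi|^2\in H^k$ even though $\phi\notin L^2$. This requires the finite-energy condition $|\phi|^2-1\in L^2$ from \eqref{eq:requirements_phi}, together with $\nabla\phi\in H^{k-1}$, so that the $\phi$-dependence is genuinely measured by $\|\phi\|_{X^k}$ plus $\|1-|\phi|^2\|_{L^2}$. I would make this dependence explicit, or assume it is included in the $\phi$-dependence of the constant.
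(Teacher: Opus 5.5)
Your proof is correct, but it takes a different route from the paper.

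\textbf{The paper's route.} The paper never uses the explicit phase. It writes
$\Phi_B^{t,t_0}(\varphi)=\varphi-i\int_0^t\bigl(1-|\phi+\varphi|^2+V(t_0+s)\bigr)\bigl(\phi+\Phi_B^{s,t_0}(\varphi)\bigr)\,\dd s$,
using conservation of $|\phi+z|^2$. It then bounds the integrand with \eqref{eq:product_Zhidkov} and the algebra property, and closes with Gronwall's lemma. The result is $\|\Phi_B^{t,t_0}(\varphi)\|_{H^k}\leq C^1(\|\varphi\|_{H^k}+t)\exp\bigl(C^2(1+\|\varphi\|_{H^k}^2)t\bigr)$.

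\textbf{Your route.} You instead bound the phase $\theta(t)$ by $C_0t$ in $H^k$. You then sum the series of $e^{-i\theta}-1$ in the Banach algebra $H^k$.

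\textbf{Common ingredient.} Both arguments rest on $1-|\phi+\varphi|^2\in H^k$, with a bound involving $\|1-|\phi|^2\|_{H^k}$. The paper's estimate contains this term explicitly. So the hidden dependence on $\|1-|\phi|^2\|_{L^2}$ that you flag is present there as well.

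\textbf{What each route buys.} Your route gives a sharper and fully explicit bound, $\|\varphi\|_{H^k}e^{Ct}+\|\phi\|_{X^k}(e^{Ct}-1)$, which reduces to $\|\varphi\|_{H^k}$ at $t=0$. It also shows directly that the flow stays in $H^k$. The Gronwall step, by contrast, tacitly presupposes that $s\mapsto\|\Phi_B^{s,t_0}(\varphi)\|_{H^k}$ is finite and integrable. The paper's route does not rely on the subproblem being explicitly solvable. It also has the same Duhamel--Gronwall structure that is reused in Proposition~\ref{prop:stability}.

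\textbf{Your caveats.} They are accurate. What the paper actually proves is a bound of the form $C_1e^{Ct}$ on the $H^k$ norm, not literally $\exp(Ct)$; this controls the $H^2$ norm only when $k\geq2$. That weaker form is all that is used afterwards, e.g.\ in Proposition~\ref{prop:stability}.
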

\begin{proof}
    In view of \eqref{eq:preserved_quantity_Phi_B}, we can write the Duhamel
    formula
    \[    \Phi_B^{t,t_0} (\varphi)  = \varphi - i\int_{0}^{t} (1-|\phi +
      \varphi|^2 +V(t_0+s))(\phi+\Phi_B^{s,t_0}(\varphi)) \dd s \]
    and by taking the Sobolev norm we get
    \begin{align*}
      \|\Phi_B^{t,t_0} (\varphi) \|_{H^k} &\leq  \| \varphi \|_{H^k} + C_{d} t \| \phi
      \|_{X^k} (\|1- |\phi|^2 \|_{H^k} +  \| \phi \|_{X^k} \| \varphi \|_{H^k} + \|
      \varphi \|_{H^k}^2 + \| V \|_{\mathcal{C}^0_T H^k} ) \\
      & \quad +C_{d}  (\|1- |\phi|^2 \|_{H^k} +  \| \phi \|_{X^k} \| \varphi \|_{H^k} + \|
      \varphi \|_{H^k}^2 + \| V \|_{\mathcal{C}^0_T H^k} ) \int_{0}^{t}
      \|\Phi_B^{s,t_0}(\varphi) \|_{H^k} \dd s.
    \end{align*}
    We then infer by Gronwall's lemma that
    \[  \|\Phi_B^{t,t_0} (\varphi) \|_{H^k} \leq C_{d,\phi,V}^1 (\| \varphi \|_{H^k} +
      t) \exp( {C}^2_{d,\phi,V}(1+ \| \varphi \|_{H^k}^2) t )  \]
    with constants $C^1_{d,\phi,V}>0$ and ${C}^2_{d,\phi,V}>0$, which gives the result.
  \end{proof}

The following result shows the stability of the flow $\Phi_B^{t,t_0}$.

\begin{proposition} \label{prop:stability}
Let $M_2>0$, $f,g\in H^2(\R^d)$ such that $\|f\|_{H^2},\|g\|_{H^2} \leq M_2$,
and $t_0\geq 0$. Then, there exists $C(M_2)>0$ such that, for $\tau \leq
  1$,
  \[ \| \Phi_B^{\tau,t_0} (f) - \Phi_B^{\tau,t_0} (g)\|_{H^2} \leq e^{\tau C(M_2)}  \|f - g\|_{H^2}. \]
\end{proposition}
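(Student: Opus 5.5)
We start from the explicit formula for the nonlinear flow \eqref{eq:flows_splitting_v}. Writing $\Theta_{t_0}^\tau = \int_{t_0}^{t_0+\tau} V(s)\,\dd s$, we have
\[
  \Phi_B^{\tau,t_0}(f) - \Phi_B^{\tau,t_0}(g)
  = e^{-i\Theta_{t_0}^\tau}\Big( e^{-i\tau(1-|\phi+f|^2)}(\phi+f) - e^{-i\tau(1-|\phi+g|^2)}(\phi+g)\Big).
\]
We then split the bracket as $e^{-i\tau(1-|\phi+f|^2)}(f-g) + \big(e^{-i\tau(1-|\phi+f|^2)}-e^{-i\tau(1-|\phi+g|^2)}\big)(\phi+g)$.

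The key point is that each multiplier has the form $e^{i\psi}$ with $\psi$ real, $\psi = \tau a + b$. Here $a = |\phi+f|^2-1$ lies in $H^2 \subset L^\infty$, with $\|a\|_{H^2}\le C(M_2)$ by \eqref{eq:product_Zhidkov} and the bound on $\|1-|\phi|^2\|_{H^2}$ from the proof of Proposition~\ref{prop:high_reg}. Likewise $b = -\Theta_{t_0}^\tau$ satisfies $\|b\|_{H^2}\le \tau\|V\|_{\mathcal{C}^0_T H^2}$. We use the elementary multiplier estimate
\[
\|e^{i\psi} h\|_{H^2}\le \|h\|_{H^2} + C\big(\|\nabla\psi\|_{H^1}+\|\nabla\psi\|_{H^1}^2\big)\|h\|_{H^2}.
\]
This holds because $\|e^{i\psi}h\|_{L^2}=\|h\|_{L^2}$ exactly, while the derivatives produce terms such as $\nabla\psi\, h$, $\Delta\psi\, h$ and $(\nabla\psi)^2 h$, each controlled via the algebra/product bounds and Sobolev embeddings for $d\le3$. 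Since $\|\nabla \psi\|_{H^1}\le C(M_2)\tau$ for $\tau\le1$, the multiplier has $H^2\to H^2$ norm at most $1+C(M_2)\tau\le e^{C(M_2)\tau}$. Applied with $h=f-g$, this handles the first term.

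For the second term, we write
\[
e^{-i\tau a_f}-e^{-i\tau a_g} = -i\tau (a_f-a_g)\int_0^1 e^{-i\tau(\theta a_f+(1-\theta)a_g)}\,\dd\theta,
\]
with $a_f-a_g = 2\Re(\overline{\phi}(f-g)) + \Re((\overline{f}+\overline{g})(f-g))$, so that $\|a_f-a_g\|_{H^2}\le C(M_2)\|f-g\|_{H^2}$. Since $\phi+g$ is only in $X^2$, not $H^2$, we multiply the $H^2$ function $a_f-a_g$ by the $X^2$ function $\phi+g$ using \eqref{eq:product_Zhidkov}, and bound the $\theta$-integral of exponentials in $X^2$ (an $L^\infty$ norm equal to one plus derivatives controlled as above). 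Doing this carefully, the product of an $H^2$ function with an $X^2$ function whose phase exponential is in $X^2$ remains in $H^2$. The second term is therefore bounded by $C(M_2)\tau\|f-g\|_{H^2}$, and the common factor $e^{-i\Theta}$ contributes another factor $(1+C\tau)$.

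Combining, $\|\Phi_B^{\tau,t_0}(f)-\Phi_B^{\tau,t_0}(g)\|_{H^2}\le (1+C(M_2)\tau)^2\|f-g\|_{H^2}\le e^{C(M_2)\tau}\|f-g\|_{H^2}$, after renaming the constant. This constant also depends on $\|\phi\|_{X^2}$ and $\|V\|_{\mathcal{C}^0_TH^2}$, which are fixed. The main obstacle is making sure the leading coefficient in front of $\|f-g\|_{H^2}$ is exactly $1$ rather than some constant $C_d$. This is why we must not apply \eqref{eq:product_Zhidkov} to the first term, and instead use the exact $L^2$ isometry of unimodular multiplication, together with the fact that every derivative term carries a factor $\tau$. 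If that bookkeeping proves delicate in the $H^2$ seminorm, a fallback is to use the Duhamel/Gronwall approach of Proposition~\ref{prop_continuity} on the difference $z_f(t)-z_g(t)$. Its time derivative is $-i(1-|\phi+f|^2+V)(z_f-z_g) - i(|\phi+g|^2-|\phi+f|^2)(\phi+z_g)$, which gives $\|z_f(t)-z_g(t)\|_{H^2}\le\|f-g\|_{H^2}+C(M_2)\int_0^t\|z_f-z_g\|_{H^2} + C(M_2)t\|f-g\|_{H^2}$, and Gronwall then yields the claim directly.
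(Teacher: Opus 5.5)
Your proof is correct. Your main argument takes a different route from the paper; the paper's proof is essentially your fallback.

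The paper never uses the closed-form exponential. It writes $\Phi_B^{\tau,t_0}(f)-\Phi_B^{\tau,t_0}(g)$ via the Duhamel formula, with the frozen quantities $\Fc=1-|\phi+f|^2$ and $\Gc=1-|\phi+g|^2$, and keeps $f-g$ with coefficient one. It bounds $\tau\phi(\Fc-\Gc)$ and $(\Fc-\Gc)\Phi_B^{s,t_0}(g)$, the latter using Proposition~\ref{prop_continuity} for $s\le\tau\le1$. The $\Fc$- and $V$-weighted differences are bounded by \eqref{eq:product_Zhidkov}. It then closes with Gronwall to get $(1+\tau C)e^{\tau C}$.

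You instead evaluate the explicit phase at time $\tau$ and rely on three ingredients:
\begin{itemize}
\item the unimodular multiplier bound $\|e^{i\psi}h\|_{H^2}\le\bigl(1+C(\|\nabla\psi\|_{H^1}+\|\nabla\psi\|_{H^1}^2)\bigr)\|h\|_{H^2}$, which is valid by exact $L^2$ isometry for real $\psi$, the Leibniz rule, Minkowski over the derivatives, and $H^1\subset L^4$, $H^2\subset L^\infty$ for $d\le3$;
\item the mean-value identity for $e^{-i\tau a_f}-e^{-i\tau a_g}$;
\item an $X^2$ bound on $e^{i\psi}(\phi+g)$.
\end{itemize}

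Your route is non-iterative. It replaces the Gronwall step and the appeal to Proposition~\ref{prop_continuity} by a direct computation on the explicit phase. It also shows transparently why the leading constant is exactly $1$: every derivative landing on the phase costs a factor $\tau$.

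The paper's route needs no auxiliary multiplier lemma and no check that the phase-modulated $\phi+g$ stays in $X^2$. It uses only the ODE structure (conservation of $|\phi+z|^2$) and the product estimates. This is the same template used for Proposition~\ref{prop_continuity} and for tracking the $1/\eps^2$ dependence in Remark~\ref{rem:influence_eps}.
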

\begin{proof}
As in the proof of Proposition \ref{prop_continuity}, $\Phi_B^{\tau,t_0}$ can
also be defined implicitly by
  \[    \Phi_B^{\tau,t_0}(f) = f - i\int_{0}^{\tau} (\Fc+V(t_0+s)) \left(\phi
      + \Phi_B^{s,t_0}( f) \right)\dd s,\]
where $\Fc = 1-|\phi + f|^2$ is preserved by the flow of $t \mapsto
\Phi_B^{t,t_0}$ from \eqref{eq:preserved_quantity_Phi_B}. Defining similarly
$\Gc =  1-|\phi + g|^2$, it holds
 \begin{equation}\label{eq:phiB_stab}
    \begin{split}
      &\Phi_B^{\tau,t_0}( f) - \Phi_B^{\tau,t_0}( g) \\
      & \quad = f-g - i\int_0^\tau
      \Big( (\Fc + V(t_0+s) ) (\phi + \Phi_B^{s,t_0}( f) ) - (\Gc + V(t_0+s) )(\phi + \Phi_B^{s,t_0}( g) ) \Big)\dd s\\
      &\quad = f - g - i\tau \phi(\Fc - \Gc)
      - i\int_0^\tau \Big(\Fc \Phi_B^{s,t_0} (f) - \Gc \Phi_B^{s,t_0} (g) \Big)\dd s  - i \int_0^\tau V(t_0+s) \Big(\Phi_B^{s,t_0} (f) -
      \Phi_B^{s,t_0} (g) \Big)\dd s .
    \end{split}
  \end{equation}
  The first term $f-g$ in the right hand side of \eqref{eq:phiB_stab} is harmless.
  Regarding the second one, since
  \[ \Fc - \Gc =1-|\phi + f|^2 - 1 + |\phi + g|^2 = \Re\Big( 2\overline{\phi}(g-f) + (f+g)\overline{(g-f)}\Big),  \]
  from \eqref{eq:product_Zhidkov} we infer
  \begin{equation}\label{eq:phiB_stab1}
    \|\phi (\Fc - \Gc)\|_{H^2} \leq C_d \| \phi \|_{X^2} (\| \phi \|_{X^2}+M_2)\|f - g\|_{H^2}.
  \end{equation}
   We now deal with the third term in \eqref{eq:phiB_stab} as follows: we write
  \begin{equation}\label{eq:phiB_stab3}
    \int_0^\tau \Big(\Fc \Phi_B^{s,t_0} (f) - \Gc \Phi_B^{s,t_0} (g) \Big)\dd s =
    \int_0^\tau \Big(\Fc \big(\Phi_B^{s,t_0} (f) - \Phi_B^{s,t_0} (g)\big)
     + \big(\Fc - \Gc)\Phi_B^{s,t_0} (g) \Big)\dd s,
   \end{equation}
   and as before, we bound the two terms inside the integral as
   \begin{multline}\label{eq:phiB_stab4}
    \left\|\int_0^\tau \Fc\big(\Phi_B^{s,t_0} (f) - \Phi_B^{s,t_0} (g)\big)\dd s\right\|_{H^2} \\
    \leq C_d \left(\| 1-|\phi|^2 \|_{H^2} + \| \phi \|_{X^2} M_2 + M_2^2
    \right) \int_0^\tau \| \Phi_B^{s,t_0} (f) - \Phi_B^{s,t_0} (g)\|_{H^2}\dd s
  \end{multline}
  and
  \begin{equation}\label{eq:phiB_stab5}
  \begin{aligned}
    \left\|\int_0^\tau \big(\Fc - \Gc)\Phi_B^{s,t_0} (g)\dd s\right\|_{H^2}
    & \leq C_d(\| \phi \|_{X^2}+M_2)\|f-g\|_{H^2}\int_0^\tau\|\Phi_B^{s,t_0} (g)\|_{H^2}\dd s \\
    & \leq C_d(\| \phi \|_{X^2}+M_2)\|f-g\|_{H^2} C(M_2),
    \end{aligned}
  \end{equation}
  for a constant $C(M_2)>0$, where we used Proposition \ref{prop_continuity} and the fact that $s\leq \tau \leq 1$ in the last step. Finally for the fifth term in \eqref{eq:phiB_stab} we directly infer
    \begin{equation}\label{eq:phiB_stab6}
    \left\| \int_0^\tau V(t_0+s)\big(\Phi_B^{s,t_0} (f) - \Phi_B^{s,t_0} (g)\big)\dd s    \right\| \leq C_d  \| V \|_{\mathcal{C}^0_T H^2}  \int_0^\tau \| \Phi_B^{s,t_0} (f) - \Phi_B^{s,t_0} (g)\|_{H^2}\dd s.
  \end{equation}
  Putting \eqref{eq:phiB_stab} together with \eqref{eq:phiB_stab1}-\eqref{eq:phiB_stab3}-\eqref{eq:phiB_stab4}-\eqref{eq:phiB_stab5}-\eqref{eq:phiB_stab6}, we finally get
  \[
    \|\Phi_B^{\tau,t_0} (f) - \Phi_B^{\tau,t_0} (g)\|_{H^2} \leq
    \|f - g\|_{H^2}
    + \tau C(M_2)\|f - g\|_{H^2}
    + C(M_2) \int_0^\tau \|\Phi_B^{s,t_0}(f) - \Phi_B^{s,t_0}(g)\|_{H^2}\dd s
  \]
  where the constant $C(M_2)$ gathers all the previous constants and only
  depends on $M_2$. Applying Gronwall's lemma, we obtain
  \[
    \|\Phi_B^{\tau,t_0} (f)- \Phi_B^{\tau,t_0} (g)\|_{H^2} \leq \big(1 + \tau C(M_2)\big) e^{\tau C(M_2)}
    \|f - g\|_{H^2}
  \]
  which, after recalling that $1 + {\tau C(M_2)} \leq e^{\tau C(M_2)}$, yields
  the sought-after stability estimate.
\end{proof}

\section{Local error and convergence}  \label{sec:local_error}

Now that we have established stability estimates for both flows $\Phi_A$ and
$\Phi_B$, it remains to study the local error in order to show convergence of
the splitting schemes. For the sake of clarity, we focus first in this section
on the autonomous case, that is $V$ is a time-independent potential. Hence, both
flows $\Phi_A^t$ and $\Phi_B^t$ are independent of the starting time $t_0$.
Extension to the non-autonomous case is analyzed in
Section~\ref{sec:non_autonomous}. As mentioned before, we follow here the
approach from \cite{kochErrorAnalysisHighorder2013,Lubich2008}.

\subsection{A primer on Lie derivatives} \label{sec_Lie_derivatives}

Let $F$ be a vector field with domain in $H^1(\R^d)$. We denote by
$\Phi_F^t(\varphi)$ the solution to the differential equation
\[ \frac{\dd}{\dd t} \psi(t)= F(\psi(t)) \]
with initial data $\psi(0)=\varphi \in H^1(\R^d)$. We also define the \textit{Lie derivative} $D_F$ of $F$ by
\[ (D_F G)(\varphi) = G'(\varphi)[F(\varphi)]  \]
for any other Fréchet differentiable vector field $G$ on $H^1(\R^d)$ and where $G'(\varphi)[\psi]$ denotes the Fréchet derivative of $G$ at $\varphi$ in direction $\psi\in H^1(\R^d)$. In particular from the chain rule we directly get
\[ \frac{\dd}{\dd t}G( \Phi_F^t(\varphi))
= G'(\Phi_F^t(\varphi))[F(\Phi_F^t(\varphi))]
= (D_F G)(\Phi_F^t(\varphi)),  \]
and, by induction,
\[ \frac{\dd^k}{\dd t^k}G( \Phi_F^t(\varphi))= (D_F^k G)(\Phi_F^t(\varphi)).  \]
For every vector field $G$ smooth enough, we then set, by developing the Taylor
series of $G (\Phi_F^t(\varphi))$ at $t=0$,
\begin{equation}\label{eq:Taylor_Lie} G (\Phi_F^t(\varphi))
= \sum_{k=0}^{+\infty} \frac{t^k}{k!} \left(\frac{\dd^k}{\dd t^k}G( \Phi_F^t(\varphi))\right)_{t=0}
= \sum_{k=0}^{+\infty} \frac{t^k}{k!} (D_F^kG)(\varphi)
\eqqcolon \left( \exp(t D_F)G \right)(\varphi).  \end{equation}
In particular taking $G=\Id$, we can rewrite our flow as an exponential, namely
\[ \Phi_F^t(\varphi)= \exp(t D_F) \Id(\varphi). \]
We derive from \eqref{eq:Taylor_Lie} the following derivation rule
\[  \frac{\dd}{\dd t}  \exp(t D_F)G (\varphi) = \Big(\exp(t D_F) D_F G \Big)
  (\varphi),\]
as well as the equality
\begin{equation} \label{eq:FoG_flux}
  \Phi_F^t \circ \Phi_G^s (\varphi) = \exp(s D_G) \exp(t D_F) \Id(\varphi),
\end{equation}
where one has to be careful with sense of composition. We finally introduce the
\textit{Lie bracket}
\begin{equation} \label{eq:commutator_vector_fields}
\left[D_F,D_G  \right]=D_F D_G - D_G D_F=D_{\left[ G,F \right]}
\end{equation}
where we remark that the commutator\footnote{We recall that, for vector fields,
  the commutator is defined as $[F,G](\varphi) = F'(\varphi)[G(\varphi)] -
  G'(\varphi)[F(\varphi)]$, which reduces to $[F,G] = FG - GF$ when $F$ and $G$
  are linear operators.} of the vector fields are reversed, see
Lemma~\ref{lem:app-1}. These identities, as well as other manipulation rules,
are discussed in more details in Appendix~\ref{app:lie}. We also see from
\eqref{eq:FoG_flux} that if two vector fields commute, then their respective
flows commute too and $\Phi_{F+G}^t = \Phi_F^t\circ\Phi_G^t$. From the renowned
Baker-Campbell-Hausdorff (BCH) formula
\cite[Section~III.4]{hairerGeometricNumericalIntegration2002}, it appears that
we need to derive commutator estimates to study the error between $\Phi_{F+G}^t$
and $\Phi_F^t\circ\Phi_G^t$, which is the goal of the next section.

\subsection{Commutator estimates}\label{sec:commutator_est}
Here we recall that, for a fixed $\phi$ with finite energy \eqref{eq:requirements_phi}, we are using the
following vector fields, with domains possibly strict subsets of $H^1(\R^d)$,
\[ A(\varphi)=-i\Delta \varphi - i \Delta \phi  \quad \text{and} \quad B(\varphi)=-i(1-|\phi+\varphi|^2+V)(\phi+\varphi)  \]
associated respectively to the flows
\[  \Phi_A^t(\varphi)=e^{-it\Delta}\varphi + e^{-it\Delta}\phi-\phi  \quad
  \text{and} \quad \Phi_B^{t}(\varphi)= e^{-it(1-|\phi+\varphi|^2+V)}
  (\phi+\varphi) - \phi.  \]
Note also that, since $A$ is affine, it holds $(\Phi_A^t)'(\varphi) \equiv
  e^{-it\Delta}$ as well as
\[
  A'(\varphi)[\psi] = -i\Delta \psi,\quad \text{and} \quad A''(\varphi) \equiv
  0.
\]
We additionally directly compute that
\[
  \begin{split}
    B'(\varphi)[\psi] &= 2i \Re \left( \overline{\psi}(\phi+\varphi)
    \right)(\phi+\varphi) - i \psi (1-|\phi+\varphi|^2+V), \\
    B'' (\varphi) [\psi,\chi] &= 2 i\left( \psi\chi(\overline{\phi+\varphi}) +
      2\Re(\overline\psi\chi)(\phi+\varphi) \right),
  \end{split}
\]
from which we infer the following bounds on the nonlinear vector field using
\eqref{eq:sobolev_algebra} and \eqref{eq:product_Zhidkov}.

\begin{lemma} \label{lemma_nonlinear_operator_estimate}
  For any $\varphi$, $\psi$, $\chi \in H^2(\R^d)$, we have
  \[ \begin{split}
      \| B (\varphi) \|_{ H^2} & \leq C_d \left( \| \phi\|_{X^2} + \|\varphi\|_{H^2} \right) \left(1 + \| V\|_{H^2} + \| \phi\|_{X^2}^2 + \|\varphi\|_{H^2}^2 \right)  \\
      \| B' (\varphi) [\psi] \|_{ H^2} & \leq
       C_d \| \psi\|_{H^2} \left(1 + \| V\|_{H^2} + \| \phi\|_{X^2}^2 + \|\varphi\|_{H^2}^2 \right),\\
      \| B'' (\varphi) [\psi,\chi] \|_{ H^2} & \leq
       C_d \| \psi\|_{H^2} \| \chi\|_{H^2} \left( \| \phi\|_{X^2} + \|\varphi\|_{H^2} \right).
    \end{split} \]
\end{lemma}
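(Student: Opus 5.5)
The plan is to bound each of the three quantities directly from its explicit formula. We expand each product and estimate the factors using the algebra property \eqref{eq:sobolev_algebra} for products of two $H^2$ functions, and \eqref{eq:product_Zhidkov} with $k=2>d/2$ for products of an $X^2$ function with an $H^2$ function. The one subtlety is that $\phi\notin H^2$, so any term involving $\phi$ alone must be handled with care. We always arrange products so that at least one factor lies in $H^2$, or else isolate a factor such as $1-|\phi|^2$. Note that $\Delta\phi\in L^2$ and $1-|\phi|^2\in L^2$ by \eqref{eq:requirements_phi}, and the proof of Proposition~\ref{prop:high_reg} gives $\|1-|\phi|^2\|_{H^2}$ in terms of $\phi$. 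Throughout, $V$ is time-independent and taken in $H^2$.

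\textbf{Second and third bounds.} For $B'(\varphi)[\psi]$, write $\zeta=\phi+\varphi\in X^2$, with $\|\zeta\|_{X^2}\le \|\phi\|_{X^2}+C_d\|\varphi\|_{H^2}$ by \eqref{eq:product_Zhidkov}.
\begin{itemize}
\item The term $2i\Re(\overline{\psi}\zeta)\zeta$ is a product of $\psi\in H^2$ with two $X^2$ factors. Applying \eqref{eq:product_Zhidkov} twice, we must first check that $X^2$ is stable under products in $d\le3$. This follows by Leibniz's rule, $L^\infty$ bounds, and $\nabla\zeta\in H^1\subset L^4$. The result is bounded by $C_d\|\psi\|_{H^2}(\|\phi\|_{X^2}^2+\|\varphi\|_{H^2}^2)$.
\item The term $\psi(1-|\zeta|^2+V)$ is treated by writing $1-|\zeta|^2=(1-|\phi|^2)-2\Re(\overline\phi\varphi)-|\varphi|^2$. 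Each piece, multiplied by $\psi\in H^2$, is controlled in $H^2$: the constant $1$ is fine since $\psi\in H^2$, the $|\phi|^2$ part is handled via the $X^2$ algebra, and the $V$ part by \eqref{eq:sobolev_algebra}. In fact one can simply bound $\|\psi\, w\|_{H^2}\le C\|\psi\|_{H^2}\|w\|_{X^2}$ with $w=1-|\zeta|^2+V$ viewed in $X^2$. This gives the factor $1+\|V\|_{H^2}+\|\phi\|_{X^2}^2+\|\varphi\|_{H^2}^2$, after using $ab\le a^2+b^2$ for the cross terms.
\item The bound on $B''$ follows the same way: each term is trilinear, with $\psi,\chi\in H^2$ and a single factor $\zeta\in X^2$.
\end{itemize}

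\textbf{First bound.} The estimate on $B(\varphi)$ is the main obstacle, since $\zeta\notin H^2$, and the step to watch is that no term is merely in $X^2$. We split
\[ B(\varphi)=-i\bigl[(1-|\phi|^2)\phi + (1-|\phi|^2)\varphi + V\zeta -(2\Re(\overline\phi\varphi)+|\varphi|^2)\zeta\bigr]. \]
\begin{itemize}
\item The last two groups contain an $H^2$ factor ($V$, or $\varphi$) times $X^2$ factors, so they are handled as above.
\item The first two contain $1-|\phi|^2$. This lies in $L^2$ by \eqref{eq:requirements_phi}, and its derivatives are products of derivatives of $\phi$, as in the proof of Proposition~\ref{prop:high_reg}. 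Hence $1-|\phi|^2\in H^2$, with norm controlled by $\phi$.
\end{itemize}
These two terms therefore produce a contribution depending on $\phi$ through quantities beyond $\|\phi\|_{X^2}$, and this must be absorbed into the constant. Either the energy finiteness of $\phi$ is implicitly included in the constant $C_d$, which depends on the fixed $\phi$, or the bound is to be read with that dependence allowed. The stated factor $(\|\phi\|_{X^2}+\|\varphi\|_{H^2})(1+\|V\|_{H^2}+\|\phi\|_{X^2}^2+\|\varphi\|_{H^2}^2)$ then follows by grouping terms.
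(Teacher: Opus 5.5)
Your proof is correct and follows the same route as the paper, which expands $B$, $B'$ and $B''$ and applies \eqref{eq:sobolev_algebra} and \eqref{eq:product_Zhidkov} term by term. Your caveat on the first bound is also right: the term $(1-|\phi|^2)\phi$ (that is, $B(0)$ with $V=0$) requires $\|1-|\phi|^2\|_{L^2}$, which $\|\phi\|_{X^2}$ does not control, so $C_d$ must implicitly depend on the fixed $\phi$ — the paper tracks this $\|1-|\phi|^2\|_{H^2}$ dependence explicitly in the proofs of Propositions~\ref{prop_continuity} and \ref{prop:stability}, and only that one term is affected, since $(1-|\phi|^2)\varphi$ is already controlled via $\|1-|\phi|^2\|_{X^2}\leq 1+C\|\phi\|_{X^2}^2$.
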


We can similarly show the following commutator estimates.

\begin{lemma} \label{lemma_commutator_estimate}
  For any $\varphi$, $\psi \in H^4(\R^d)$, we have
  \[ \begin{split}
      \| \left[ A,B \right] (\varphi) \|_{ H^2} & \leq  C_d \left(\| V\|_{H^4}^2 + \| \phi\|_{X^4}^3+\|\varphi \|_{H^4}^3  \right), \\
      \| \left[ A,B \right]' (\varphi) [\psi] \|_{H^2} & \leq C_d \| \psi \|_{H^4} \left(\| V\|_{H^4}^2 + \| \phi\|_{X^4}^2+\|\varphi \|_{H^4}^2  \right) ,
    \end{split} \]
  and for any $\varphi \in H^6(\R^d)$,
  \[\| \left[ A,\left[ A,B\right] \right]  (\varphi) \|_{ H^2} \leq C_d \left(\| V\|_{H^6}^2 + \| \phi\|_{X^6}^3+\|\varphi \|_{H^6}^3  \right) .\]
\end{lemma}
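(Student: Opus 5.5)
The plan is to compute the commutator $[A,B]$ explicitly and then bound each resulting term with \eqref{eq:sobolev_algebra} and \eqref{eq:product_Zhidkov}. The footnote convention gives $[A,B](\varphi)=A'(\varphi)[B(\varphi)]-B'(\varphi)[A(\varphi)]$, and since $A'(\varphi)=-i\Delta$ this reads
\[
[A,B](\varphi) = -i\Delta B(\varphi) - B'(\varphi)\big[-i\Delta(\phi+\varphi)\big].
\]
Write $w=\phi+\varphi$ and $N=1-|w|^2+V$, so that $B(\varphi)=-iNw$. Expanding $\Delta(Nw)=N\Delta w+2\nabla N\cdot\nabla w+w\Delta N$, the term $-N\Delta w$ cancels against the corresponding part of $-B'(\varphi)[-i\Delta w]$. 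What remains is an expression of the form
\[
[A,B](\varphi) = -2\nabla N\cdot\nabla w - w\,\Delta N + 2\,\Re\big(i\,\overline{\Delta w}\,w\big)\,w,
\]
up to constants. Since $\Delta N=-2|\nabla w|^2-2\Re(\bar w\Delta w)+\Delta V$, the combination $\Re(\bar w\Delta w)$ partially cancels with the last term. In the end $[A,B](\varphi)$ contains only first and second derivatives of $w$ and $V$. Crucially, after the cancellation no derivative of order $>2$ of $w$ appears, so the $H^2$ norm costs four derivatives. This is the source of the $H^4$ requirement.

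Each term is then estimated in $H^2$. The point to keep in mind is that $w=\phi+\varphi\notin H^k$, but $\nabla w=\nabla\phi+\nabla\varphi\in H^{3}$ with $\|\nabla w\|_{H^3}\le\|\phi\|_{X^4}+\|\varphi\|_{H^4}$, and $w\in X^4$. Terms like $\nabla N\cdot\nabla w$ or $|\nabla w|^2 w$ are products in which at least one factor is in $H^2$ (a derivative of $w$ or of $V$) and the others are in $X^2$ or $H^2$. So \eqref{eq:product_Zhidkov} and the algebra property bound them by cubic or quadratic polynomials in $\|\phi\|_{X^4},\|\varphi\|_{H^4},\|V\|_{H^4}$. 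Young's inequality then reduces these to the stated form $\|V\|_{H^4}^2+\|\phi\|_{X^4}^3+\|\varphi\|_{H^4}^3$, with lower-order terms absorbed as the authors implicitly allow. Terms like $w\Delta V$ need $\|w\|_{X^2}\|\Delta V\|_{H^2}\le C(\|\phi\|_{X^2}+\|\varphi\|_{H^2})\|V\|_{H^4}$, which is again handled by Young.

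For the derivative estimate, I would differentiate the explicit formula for $[A,B]$ in the direction $\psi$. The result is a sum of terms, each linear in $\psi$ (with up to two derivatives on $\psi$) times quadratic expressions in $w$, $\nabla w$, $\nabla^2 w$ and $V$. Bounding the $H^2$ norm costs $\|\psi\|_{H^4}$, which yields the second inequality.

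For $[A,[A,B]]$, I would apply the same identity with $B$ replaced by $C=[A,B]$:
\[
[A,C](\varphi)=-i\Delta C(\varphi)-C'(\varphi)[-i\Delta w].
\]
The analogous cancellation of the highest-order term $N\,\Delta^2 w$-type contributions should leave only derivatives of $w$ and $V$ up to order $4$, so the $H^2$ norm requires $H^6$/$X^6$ control. The main obstacle is exactly verifying these cancellations of top-order derivatives in both commutators. Without them one would lose an extra two derivatives, so I would organize the computation by writing $B(\varphi)=-iNw$ with $N$ a function of $w$ and $V$ only, and use the Leibniz structure $[\Delta, m]\,f = 2\nabla m\cdot\nabla f+(\Delta m)f$ for multiplication operators. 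This makes the cancellation transparent: $[A,B]$ is a first-order differential operator in $w$ with coefficients depending on $\nabla N$, and iterating adds only two more derivatives.
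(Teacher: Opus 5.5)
Your proposal is correct and follows the paper's proof. Like the paper, you compute $[A,B]$, $[A,B]'$ and $[A,[A,B]]$ explicitly in terms of $w=\phi+\varphi$ and $V$, then bound each term in $H^2$ via \eqref{eq:sobolev_algebra}--\eqref{eq:product_Zhidkov} and Young's inequality. Absorbing the lower-order monomials is legitimate: $\phi$ is continuous with $|\phi|^2-1\in L^2$, which forces $\|\phi\|_{X^4}\ge\|\phi\|_{L^\infty}\ge 1$.

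The one inaccuracy is your emphasis on top-order cancellations, which are not needed for the derivative count. Each of $\Delta B(\varphi)$ and $B'(\varphi)[\Delta w]$ is already of second order in $w$, and likewise each piece is at most fourth order one level up. In fact the conjugated top-order terms add rather than cancel. Your last term should be $-2i\Im(\bar w\Delta w)w$, which combines with $-w\Delta N$ into $2w^2\overline{\Delta w}$, and $[A,[A,B]]$ retains a multiple of $w^2\overline{\Delta^2 w}$. So the \emph{main obstacle} you identify is not actually an obstacle.
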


\begin{proof}
 We compute
  \begin{align*}
    [A,B](\varphi) &= A'(\varphi)[B(\varphi)] - B'(\varphi)[A(\varphi)] \\
    & =\left( 2 (\phi+\varphi)(\overline{\Delta\phi+\Delta \varphi})  + 2 |\nabla \phi + \nabla \varphi|^2 -\Delta V  \right) (\phi+\varphi) \\
    & \quad + \left( 4 \Re \left( (\overline{\phi+\varphi}) (\nabla \phi + \nabla \varphi)  \right) - 2 \nabla V  \right) \cdot (\nabla \phi + \nabla \varphi),
  \end{align*}
  and by taking the $H^2$-norm, we get the result thanks to
  \eqref{eq:sobolev_algebra} and \eqref{eq:product_Zhidkov}. First note that
  from the previous computations we get
  \[  \begin{aligned}
      \left[  A,B\right]'(\varphi) \left[  \psi \right]  = &  \left( 2 \psi ( \overline{\Delta \phi + \Delta \varphi}) + 2(\phi+\varphi) \overline{\Delta \psi} + 4 \Re \left(\overline{\nabla \psi} \cdot (\nabla \phi + \nabla \varphi) \right)  \right) (\phi+\varphi) \\
      & +\left( 2( \phi+\varphi)(\overline{\Delta \phi+\Delta \varphi})+2|\nabla \phi + \nabla \varphi |^2 -\Delta V \right) \psi\\
      & + 4 \Re \left( \overline{\psi}(\nabla \phi+ \nabla \varphi) + \nabla \psi (\overline{\phi+\varphi})  \right) \cdot (\nabla \phi+ \nabla \varphi) \\
      & + \left(4\Re ((\overline{\phi+\varphi})(\nabla \phi+\nabla \varphi))- 2 \nabla V \right) \cdot \nabla \psi
    \end{aligned}  \]
  for any regular enough vector field $\psi$. Hence, through tedious computations, we infer that
  \[
  \begin{aligned}
    i\left[ A,\left[ A,B\right] \right]  (\varphi)  =\ & iA'(\varphi) \left[ \left[
        A,B\right](\varphi) \right] - i\left[  A,B\right]'(\varphi) \left[
      A(\varphi) \right] \\
    =\ & 4 \left( |\Delta  \phi+ \Delta \varphi |^2 + {4} (\nabla \phi + \nabla
      \varphi) \cdot (\overline{\nabla \Delta (\phi+\varphi)})
      +\overline{\Delta^2 (\phi+\varphi)}(\phi+\varphi)\right)(\phi +
    \varphi)  \\
    &+4 \left( 2 \nabla (\phi + \varphi) \overline{\Delta (\phi + \varphi)} + 4
      \Re \left( \overline{\nabla (\phi + \varphi)} \Delta (\phi +
        \varphi) \right)  - 4 \nabla \Delta V \right) \cdot \nabla (\phi + \varphi) \\
    &+ 4 \left( 2|\nabla\phi + \nabla\varphi|^2 + 2\Re( \overline{(\phi +
        \varphi)} (\Delta \phi + \Delta\varphi)) - \Delta V \right) (\Delta\phi +
    \Delta\varphi) - \Delta^2 V (\phi+\varphi).
     \end{aligned}
   \]
 Once again, taking the $H^2$-norm and using algebra product rules \eqref{eq:sobolev_algebra} and \eqref{eq:product_Zhidkov} gives the expected estimate.
\end{proof}

\subsection{Taylor formulas and local error} \label{sec:local_error_taylor}
We fix $\tau>0$, and we denote the vector field $H=A+B$ on $H^1(\R^d)$ so that
the solution to \eqref{eq:GP_v} after one time step $\tau$ can be written as
\[ v(\tau)= \exp(\tau D_H) \Id(v_0)   \]
with the notation of Section~\ref{sec_Lie_derivatives}. We can also rewrite
Duhamel's formula as (see Lemma~\ref{lem:app2} for more details)
\begin{equation} \label{eq:affine_Duhamel}
\begin{aligned}
  v(\tau) & = \Phi_A^t(v_0) + \int_0^{\tau} e^{-i(\tau-s)\Delta} B(v(s)) \dd s \\
  & =\exp(\tau D_A) \Id(v_0)  +  \int_0^{\tau} \exp(s D_H) D_B \exp((\tau-s)D_A) \Id(v_0) \dd s.
\end{aligned}
\end{equation}
Note that, while the affine flow $\Phi_A^t(v_0)$ appears outside of the
  integral in the first expression, only its derivative appears inside the
  integral. It is thus remarkable that the Lie affine flow $\exp(t D_A)$ appears
  two times in the second expression, and we refer to the proof of Lemma
\ref{lem:app0} for more details. We can now rewrite in Lie derivatives notations the first numerical iteration associated respectively to the Lie and Strang splitting, namely
\begin{equation}\label{eq:Lie}
  v_L^1= \Phi_B^{\tau} \circ \Phi_A^{\tau}(v_0)= \exp(\tau D_A) \exp(\tau D_B)  \Id(v_0)
\end{equation}
and
\begin{equation}\label{eq:Strang}
  v_S^1= \Phi_A^{\tau/2} \circ \Phi_B^{\tau} \circ \Phi_A^{\tau/2}(v_0)= \exp \left(\frac{\tau}{2} D_A \right) \exp(\tau D_B) \exp \left(\frac{\tau}{2} D_A \right) \Id(v_0).
\end{equation}
We can then prove the following property:

\begin{proposition}[Autonomous case] \label{prop:local_error}
  We have:

    \medskip
    \textbf{(Lie splitting)} $\| v(\tau) - v_L^1 \|_{H^2} \leq C_L\tau^2$,
      where $C_L=C(\|v_0\|_{H^4}, \| V\|_{H^4}, \| \phi \|_{X^4}) >0$,

    \medskip
    \textbf{(Strang splitting)} $\| v(\tau) - v_S^1 \|_{H^2} \leq {C}_S \tau^3$,
      where ${C}_S={C}(\|v_0\|_{H^6}, \| V\|_{H^6}, \| \phi \|_{X^6}) >0$.

\end{proposition}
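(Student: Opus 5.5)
We follow the Lubich / Koch–Neuhauser–Thalhammer strategy: write both the exact solution and the splitting step in Lie-derivative form, reduce the local error to a quadrature error, and bound the remainder through the commutator estimates of Lemma~\ref{lemma_commutator_estimate}. The affine character of $A$ only enters through the identities of Appendix~\ref{app:lie}, which we take as given.

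\textbf{Lie splitting.} First expand the numerical step \eqref{eq:Lie}. Apply the Duhamel identity (Lemma~\ref{lem:app2}) to the composition $\exp(\tau D_A)\exp(\tau D_B)\Id$, viewing $\exp(\tau D_B)\Id$ as the nonlinear flow started from $\Phi_A^\tau(v_0)$. This gives
\[ v_L^1 = \exp(\tau D_A)\Id(v_0) + \int_0^\tau \exp(\tau D_A)\exp(s D_B) D_B \Id(v_0)\,\dd s, \]
that is, $v_L^1=\Phi_A^\tau(v_0)+\int_0^\tau B(\Phi_B^s(\Phi_A^\tau(v_0)))\,\dd s$. Next, in the exact Duhamel formula \eqref{eq:affine_Duhamel}, replace $\exp(sD_H)$ by $\exp(sD_A)$ at the cost of a second Duhamel term of size $O(s)$. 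The leading integrand is then $f(s)=\exp(sD_A)D_B\exp((\tau-s)D_A)\Id(v_0)=e^{-i(\tau-s)\Delta}B(\Phi_A^s(v_0))$. Likewise, replace $\exp(sD_B)$ by the identity in the numerical integrand, again at a cost of $O(s)$. The local error then becomes
\[ \int_0^\tau f(s)\,\dd s-\tau f(\tau) + O(\tau^2)\ \text{terms}. \]
This is a rectangle-rule quadrature error, bounded by $\tau^2\sup\|f'\|_{H^2}$. Differentiating gives $f'(s)=e^{-i(\tau-s)\Delta}[A,B](\Phi_A^s(v_0))$, up to sign conventions from \eqref{eq:commutator_vector_fields}. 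By Lemma~\ref{lemma_commutator_estimate}, \eqref{eq:prop_continuity_A} with $k=4$, and the isometry of $e^{-it\Delta}$, we get $\|f'\|_{H^2}\le C(\|v_0\|_{H^4},\|V\|_{H^4},\|\phi\|_{X^4})$.

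\textbf{Controlling the $O(s)$ remainders.} Each remainder is written as $\int_0^s \frac{\dd}{\dd\sigma}(\cdots)\,\dd\sigma$. The integrands involve $B'$ applied to $H(v(\sigma))$ or to $B$ evaluated along the flows. Bounding them requires three ingredients:
\begin{itemize}
\item $H^2$ bounds on $B$, $B'$ and $B''$, given by Lemma~\ref{lemma_nonlinear_operator_estimate};
\item a uniform bound on $v(\sigma)$ in $H^4$, which follows from Proposition~\ref{prop:high_reg} with $k=4$;
\item bounds on $\Phi_B^s$ and $\Phi_A^s$ in $H^4$, given by Proposition~\ref{prop_continuity} and \eqref{eq:prop_continuity_A}.
\end{itemize}
The factor $A(v)=-i\Delta(v+\phi)$ costs two derivatives, so these terms only close in $H^2$ because we control $H^4$ norms. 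Note that the two $O(\tau^2)$ contributions do not cancel individually, and they need not: each is already $O(\tau^2)$.

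\textbf{Strang splitting.} The Strang case uses a second-order expansion. Expand $\exp(sD_H)$ once more in the exact solution, producing a double integral $\int_0^\tau\int_0^s$ with integrand built from $D_B^2$. In the numerical scheme, expand $\exp(\tau D_B)$ to second order. Matching terms, the single integrals are compared by the midpoint rule. Its error is $\tau^3\sup\|f''\|_{H^2}$, with $f''$ expressed through $[A,[A,B]]$, which is controlled in $H^2$ by Lemma~\ref{lemma_commutator_estimate} using $H^6$ regularity. The double integrals are compared by a two-dimensional quadrature: the term $\int_0^\tau\int_0^s g(s,\sigma)$ against $\frac{\tau^2}{2}g(\tau/2,\tau/2)$. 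Its error is bounded by $\tau^3$ times first derivatives of $g$, which involve $[A,B]'$ and $B''$; these are bounded by Lemmas~\ref{lemma_commutator_estimate} and~\ref{lemma_nonlinear_operator_estimate}. The remaining triple-integral terms are directly $O(\tau^3)$ in $H^2$ thanks to $H^6$ control of $v$, which again follows from Proposition~\ref{prop:high_reg}.

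\textbf{Main obstacle.} The delicate step is the bookkeeping in the Strang case. Every operator appearing after $\exp(\cdot D_A)$ must act on the correct argument, and the affine inhomogeneity $-i\Delta\phi$ must be shown to enter only through $A'=-i\Delta$ and through $\Phi_A^s$ of the point. Only then can the isometry and \eqref{eq:prop_continuity_A} be used without losing the constant.
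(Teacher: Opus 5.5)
Your proposal is correct and follows essentially the paper's proof: the same Lie-derivative Duhamel expansion of the exact solution, a right-rectangle (Lie) or midpoint (Strang) quadrature error for $f(s)=e^{-i(\tau-s)\Delta}B(\Phi_A^s(v_0))$ controlled through $[A,B]$ and $[A,[A,B]]$, a first-order two-dimensional quadrature for $g(s,\sigma)$, and direct bounds on the triple-integral and Taylor remainders. The differences are cosmetic: the paper writes the remainders $R_1$, $R_2$, $R_4$, $R_5$ purely in terms of $B$, $B'$, $B''$ along the flows (Lemmas~\ref{lem:app3}--\ref{lem:app4}), so they need only $H^2$ control of $v$ rather than the $H^4$/$H^6$ control you invoke. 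Likewise, $\partial_s g$ and $\partial_\sigma g$ are exactly $[B,A]'$ and $B'$ applied to $[B,A]$, with no separate $B''$ term.
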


\begin{proof}
  We begin the proof by iterating formula \eqref{eq:affine_Duhamel}, writing that
\begin{equation} \label{eq_true_sol_expression_local_error}
  v(\tau)= \exp(\tau D_A) \Id(v_0) +	\int_0^{\tau} \exp(s D_A) D_B \exp((\tau-s)D_A) \Id(v_0) \dd s  + R_1
\end{equation}
where
\[ R_1 \coloneqq \int_0^{\tau} \int_0^s \exp(\sigma D_H) D_B \exp((s-\sigma)D_A ) D_B \exp ( (\tau-s)D_A) \Id(v_0) \dd \sigma \dd s.  \]
  We also perform the second order Taylor expansion
  \begin{equation}\label{eq:Taylor_DB}
    \exp(\tau D_B)\Id(v_0) = v_0 + \tau D_B\Id(v_0)
    + \tau^2 \int_0^1 (1-\theta) \exp(\theta \tau D_B)D_B^2\Id(v_0) \dd \theta.
  \end{equation}

  \medskip
 \textbf{(Lie splitting).}
  We first focus on the Lie scheme. Using such Taylor expansion in the definition
  of $v_L^1$, we get by substracting \eqref{eq:Lie} to
  \eqref{eq_true_sol_expression_local_error} that
  \[ v(\tau) - v_L^1 = R_0^{(L)} + R_1 -R_2^{(L)} \]
  where
  \[ R_0^{(L)} \coloneqq  \int_0^{\tau} \exp(s D_A) D_B \exp((\tau-s)D_A) \Id(v_0) \dd s - \tau \exp(\tau D_A) D_B \Id(v_0) \]
  and
  \[ R_2^{(L)} \coloneqq \tau^2 \int_0^1  (1-\theta) \exp(\tau D_A) \exp(\theta \tau D_B)D_B^2 \Id(v_0) \dd \theta . \]
  We see that $R_0^{(L)}$ corresponds to the quadrature error of the right rectangle rule of the function
  \begin{equation}\label{eq:ffunc}
    f(s)= \exp(s D_A) D_B \exp((\tau-s)D_A) \Id(v_0) \\
  \end{equation}
  applied on the interval $\left[0,\tau\right]$. Such error can be written as
  \[ \int_0^{\tau} f(s) \dd s -\tau f(\tau)=  -\int_0^{\tau}  \theta f'(\theta ) \dd \theta.  \]
  We compute
  \begin{align*}
    f'(s)
    & = D_A \exp(s D_A) D_B \exp((\tau-s)D_A) \Id(v_0) - \exp(s D_A) D_B D_A \exp((\tau-s)D_A) \Id(v_0) \\
    & =\exp(s D_A) \left[ D_A,D_B \right] \exp((\tau-s)D_A) \Id(v_0) \\
    & = \exp(s D_A) D_{\left[ B,A \right]} \exp((\tau-s)D_A) \Id(v_0) \\
    & = e^{-i(\tau-s)\Delta}\left( \left[ B,A \right]
        \left(\Phi_A^s(v_0)\right)\right)
  \end{align*}
  which makes appear the commutator between $A$ and $B$ and where the last
  line is obtained with Lemma~\ref{lem:app0}. We can thus estimate the
  quadrature error term using \eqref{eq:prop_continuity_A}, Lemma
  \ref{lemma_commutator_estimate} and the preservation of Sobolev norms by the
  free Schr\"odinger propagator:
    \[
      \begin{split}
      \|e^{-i(\tau-s)\Delta}\left( \left[ B,A
        \right]\left(\Phi_A^s(v_0)\right)\right)\|_{H^2} &=
      \|\left[ B,A \right]\left(\Phi_A^s(v_0)\right)\|_{H^2} \\ &\leq
      C_d\big(\|V\|_{H^4}^2 + \|\phi\|_{X^4}^3 + \|\Phi_A^s(v_0)\|_{H^4}^3
      \big) \\ &\leq
      C_d\big(\|V\|_{H^4}^2 + \|\phi\|_{X^4}^3 + (\|v_0\|_{H^4} +
      \sqrt{s}\|\phi\|_{X^4})^3 \big)
      \end{split}
    \]
  This yields
  \begin{equation}\label{eq:R0_Lie}
    \| R_0^{(L)} \|_{H^2} \leq C_0 (\tau^2+\tau^{7/2} ) \leq C_0 \tau^2 ,
  \end{equation}
  as $\tau \leq 1$, and where $C_0=C(\|v_0\|_{H^4}, \| V\|_{H^4},\|\phi\|_{X^4})>0$. We now
  estimate the remaining terms $R_1$ and $R_2^{(L)}$. We first note that (see
  Lemma~\ref{lem:app3})
  \begin{multline*}
    \exp(\sigma D_H) D_B \exp((s-\sigma)D_A ) D_B \exp (
    (\tau-s)D_A) \Id(v_0) \\
    = e^{-i(\tau-s)\Delta} B' \left( \Phi_A^{s-\sigma}(v(\sigma)) \right)
    \left[e^{-i(s-\sigma)\Delta} B(v(\sigma)) \right].
  \end{multline*}
  Hence, from \eqref{eq:prop_continuity_A} and
  Lemma~\ref{lemma_nonlinear_operator_estimate} we infer that
  \begin{equation}\label{eq:R1_Lie}
    \| R_1 \|_{H^2} \leq C_1 \tau^2
  \end{equation}
  where $C_1$ only depends on $\|v_0\|_{H^2}$, $\| \phi \|_{X^2}$ and $\|V\|_{H^2}$. For
  $R_2^{(L)}$, we remark, with the help of Lemma~\ref{lem:app4}, that
  \[  \exp(\tau D_A) \exp(\theta \tau D_B)D_B^2\Id(v_0) = B'\big(\Phi_B^{\theta
      \tau}(\Phi_A^\tau(v_0))\big)\big[B(\Phi_B^{\theta
      \tau}(\Phi_A^\tau(v_0))) \big]
  \]
  so that we can infer from \eqref{eq:prop_continuity_A}, Lemma~\ref{lemma_nonlinear_operator_estimate} and
  Proposition \ref{prop_continuity} that
  \begin{equation}\label{eq:R2_Lie}
    \| R_2^{(L)} \|_{H^2} \leq C_2 \tau^2,
  \end{equation}
  where once again $C_2$ only depends on $\|v_0\|_{H^2}$, $\| \phi \|_{X^2}$ and
  $\|V\|_{H^2}$. This proves the local error for the Lie splitting.

  \medskip
  \textbf{(Strang splitting)}
  We turn to the Strang splitting, using Taylor expansion \eqref{eq:Taylor_DB} of $ \exp(\tau D_B)$
  in the definition of $v_S^1$ which gives, by substracting \eqref{eq:Strang} to
  \eqref{eq_true_sol_expression_local_error},
  \[ v(\tau) - v_S^1 = R_0^{(S)} + R_1 -R_2^{(S)} \]
  where
  \[ R_0^{(S)} \coloneqq  \int_0^{\tau} \exp(s D_A) D_B \exp((\tau-s)D_A) \Id(v_0) \dd s - \tau \exp \left(\frac{\tau}{2} D_A \right) D_B \exp \left(\frac{\tau}{2} D_A \right) \Id(v_0) \dd s \]
  and
  \[ R_2^{(S)} \coloneqq \tau^2 \int_0^1  (1-\theta) \exp \left(\frac{\tau}{2} D_A \right) \exp(\theta \tau D_B)D_B^2 \exp \left(\frac{\tau}{2} D_A \right) \Id(v_0) \dd \theta . \]
  The error term $R_0^{(S)}$ is now a quadrature error term corresponding to a
  midpoint rule for the function $f$ in \eqref{eq:ffunc}, that can be written from
  Peano representation kernel as
  \[  \int_0^{\tau} f(s) \dd s -\tau f \left(\frac{\tau}{2} \right)= \int_0^{\tau} K_1(\theta) f''(\theta ) \dd \theta \]
  with
  \[ K_1(\theta)= \left\{
      \begin{aligned}
        & \frac12 \theta^2 \quad & \text{if} \ 0 \leq \theta \leq \frac{\tau}{2}, \\
        & \frac12 (\tau-\theta)^2 & \text{if} \ \frac{\tau}{2} \leq \theta \leq \tau.
      \end{aligned} \right. \]
  We now compute the second derivative of $f$ which writes
  \begin{align*}
    f''(s)& = D_A^2 \exp(sD_A) D_B\exp((\tau-s)D_A)\Id(v_0) - 2D_A \exp(sD_A)D_BD_A\exp((\tau-s)D_A)\Id(v_0) \\
    & \quad + \exp(sD_A) D_B D_A^2 \exp((\tau-s)D_A)\Id(v_0) \\
    & = \exp(sD_A) \left[ D_A,\left[D_A,D_B\right] \right]\exp((\tau-s)D_A)\Id(v_0)\\
    & = \exp(sD_A) D_{[[B,A],A]} \exp((\tau-s)D_A)\Id(v_0)\\
    & = e^{-i(\tau-s)\Delta} \left(\left[ A,\left[A,B\right]\right] (\Phi_A^s(v_0)) \right)
  \end{align*}
  since $ \left[ \left[B,A\right] ,A\right]=\left[ A,\left[A,B\right] \right] $
  and where we used again Lemma~\ref{lem:app0}.
  From Lemma~\ref{lemma_commutator_estimate} and
    \eqref{eq:prop_continuity_A} we then infer
  \[ \| R_0^{(S)} \|_{H^2} \leq C_0\tau^3  \]
  where  $C_0 = C(\| v_0\|_{H^6}$, $\| \phi \|_{X^6}, \| V
  \|_{H^6}) > 0$. To estimate
  $R_1-R_2^{(S)}$, we introduce the function
  \[  g(s,\sigma)=\exp(\sigma D_A) D_B \exp((s-\sigma)D_A) D_B \exp((\tau-s)D_A) \Id(v_0).  \]
  We show next that there is a compensation between the last two remaining error
  terms, writing that
  \begin{align*}
    & R_1-R_2^{(S)}  = \int_0^\tau \int_0^s g(s,\sigma) \dd \sigma \dd s - \frac{\tau^2}{2} g\left(\frac{\tau}{2},\frac{\tau}{2} \right) \\
    & \quad + \int_0^\tau \int_0^s  \left( \exp(\sigma D_H) - \exp(\sigma D_A) \right) D_B \exp((s-\sigma)D_A) D_B \exp((\tau-s)D_A) \Id(v_0) \dd \sigma \dd s \\
    & \quad +\tau^2 \left( \frac12 \exp \left(\frac{\tau}{2} D_A \right) D_B^2 \exp\left(\frac{\tau}{2} D_A \right) \Id(v_0) \right. \\
    & \quad \quad \quad \quad \left. - \int_0^1 (1-\theta) \exp\left(\frac{\tau}{2} D_A\right) \exp(\theta \tau D_B) D_B^2 \exp\left(\frac{\tau}{2} D_A \right) \Id(v_0) \dd \theta \right) \\
    & \quad \coloneqq R_3^{(S)}+R_4^{(S)}+R_5^{(S)}.
  \end{align*}

  As pointed out in \cite[Section~5]{Lubich2008}, $R_3^{(S)}$ is the quadrature
  error of a first-order two-dimensional quadrature formula, thus denoting the
  triangle
  \[ K_\tau=\enstq{(s,\sigma)\in \left[0,\tau \right]^2}{0 \leq s \leq \tau,\ 0 \leq \sigma \leq \tau-s},\]
  we infer the bound by multivariate Taylor expansion
  \[ \left\| \int_0^\tau \int_0^s g(s,\sigma) \dd \sigma \dd s -
      \frac{\tau^2}{2} g\left(\frac{\tau}{2}  ,\frac{\tau}{2} \right)
    \right\|_{H^2} \leq C \tau^3 \left( \sup_{K_\tau} \left\|\frac{\partial
          g}{\partial s}(s,\sigma)  \right\|_{H^2} + \sup_{K_\tau}
      \left\|\frac{\partial g}{\partial \sigma}(s,\sigma)   \right\|_{H^2}
    \right).   \]
  We readily compute, using Lemma~\ref{lem:app3},
  \begin{align*}
    \frac{\partial g}{\partial s}(s,\sigma)  =&  \exp(\sigma D_A) D_B
    \exp((s-\sigma)D_A) D_{[B,A]} \exp((\tau-s)D_A) \Id(v_0) \\
    = &  e^{-i(\tau-s)\Delta} \left(  [B,A]'(\Phi_A^s(v_0)) \left[
        e^{-i(s-\sigma) \Delta} B (\Phi_A^\sigma(v_0)) \right] \right)
  \end{align*}
  and
  \begin{align*} \frac{\partial g}{\partial \sigma}(s,\sigma) & = \exp(\sigma
    D_A) D_{[B,A]} \exp((s-\sigma)D_A) D_B \exp((\tau-s)D_A) \Id(v_0) \\
    & = e^{-i(\tau-s)\Delta} \left(  B'(\Phi_A^s(v_0)) \left[
        e^{-i(s-\sigma) \Delta} [B,A] (\Phi_A^\sigma(v_0)) \right]
    \right).
  \end{align*}
  Hence, from Lemmas \ref{lemma_nonlinear_operator_estimate} and
  \ref{lemma_commutator_estimate} together with
    \eqref{eq:prop_continuity_A}, we get
  \[ \| R_3^{(S)} \|_{H^2} \leq C_3 \tau^3,   \]
  where $C_3 = C(\| v_0 \|_{H^4}, \| \phi \|_{X^4}, \| V \|_{H^4})>0$. To bound $R_4^{(S)}$, we recall by Duhamel's formula that
  \[ \exp(\sigma D_H)\Id(\varphi) =  \exp(\sigma D_A) \Id(\varphi) + \int_0^\sigma
    \exp(\nu D_H)D_B\exp((\sigma-\nu)D_A) \Id(\varphi) \dd \nu \]
  hence, by Lemma~\ref{lem:app3},
    \[
      \begin{split}
        R_4^{(S)}=  & \int_0^\tau \int_0^s  \int_0^\sigma \exp(\nu
        D_H)D_B\exp((\sigma-\nu)D_A) D_B \exp((s-\sigma)D_A) D_B \exp((\tau-s)D_A)
        \Id(v_0) \dd \nu \dd \sigma \dd s \\
        = & \int_0^\tau \int_0^s \int_0^\sigma e^{-i(\tau-s)\Delta}\Big(
        B'\left(\Phi_A^{s-\nu}(v(\nu))\right)\left[e^{-i(s-\sigma)\Delta}B'(\Phi_A^{\sigma-\nu}(v(\nu)))
          [e^{-i(\sigma-\nu)\Delta}B(v(\nu))]\right] \\
        &\phantom{\int_0^\tau \int_0^s \int_0^\sigma e^{-i(\tau-s)\Delta} }
        + B''\left(\Phi_A^{s-\nu}(v(\nu))\right)
        \left[e^{-i(s-\nu)\Delta}B(v(\nu)),
          e^{-i(s-\sigma)\Delta}B(\Phi_A^{\sigma-\nu}(v(\nu)))\right]\Big)
        \dd \nu \dd \sigma \dd s
      \end{split}
    \]
  This leads, with Lemma~\ref{lemma_nonlinear_operator_estimate} and
    \eqref{eq:prop_continuity_A}, to
  \[ \| R_4^{(S)} \|_{H^2} \leq C_4 \tau^3,   \]
  where $C_4=C(\| v_0 \|_{H^2}, \| \phi \|_{X^2}, \| V \|_{H^2})>0$. Finally,
  to estimate $R_5^{(S)}$, by Taylor expansion we write
  \[  \exp(\theta \tau D_B) = \Id + \theta \tau \int_0^1 \exp ( \kappa \theta \tau D_B )D_B \dd \kappa, \]
  thus, using Lemma~\ref{lem:app4},
    \[
      \begin{split}
        R_5^{(S)} &= \tau^3 \int_0^1 \int_0^1 (1-\theta) \theta \exp\left(\frac{\tau}{2}
          D_A \right) \exp ( \kappa \theta \tau D_B )D_B^3 \exp\left(\frac{\tau}{2} D_A
        \right) \Id(v_0) \dd \kappa \dd \theta \\
        &=  \tau^3 \int_0^1 \int_0^1 (1-\theta) \theta
        e^{-i\frac\tau2\Delta}\Big(B''(\eta_{\kappa\theta\tau})
        \big[B(\eta_{\kappa\theta\tau}),B(\eta_{\kappa\theta\tau})\big] +
        B'(\eta_{\kappa\theta\tau})\big[B'(\eta_{\kappa\theta\tau})
        [B(\eta_{\kappa\theta\tau})]\big]\Big) \dd \kappa \dd \theta
      \end{split}
    \]
  where $\eta_{\kappa\tau\theta} =
  \Phi_B^{\kappa\tau\theta}(\Phi_A^{\frac\tau2}(v_0))$.
  Once again, using Lemma~\ref{lemma_nonlinear_operator_estimate} and
    \eqref{eq:prop_continuity_A} we get
  \[ \| R_5^{(S)} \|_{H^2} \leq C_5 \tau^3,   \]
  with $C_5 = C(\| v_0 \|_{H^2}, \| \phi \|_{X^2}, \| V \|_{H^2})>0$,
  which ends the proof.
\end{proof}

\subsection{Non-autonomous systems} \label{sec:non_autonomous}

We now detail how to extend the previous results to non-autonomous systems, that
is when the potential $V$ has an explicit dependency on time. As suggested by
\cite[Section~3.6]{blanesConciseIntroductionGeometric2025}
and \cite[Section~3.5]{blanesSplittingMethodsDifferential2024} in the finite
dimensional case, the main idea is to consider an extended variable
$\tv \coloneqq \binom{v}{t}\in H^1(\R^d)\times\R$ in order to rewrite
non-autonomous systems as autonomous (modified) systems, for which we can
perform the same local error study. Hereafter, we define $Y^k_T = H^k(\R^d)
\times [0,T]$, equipped with the norm $\|\tv\|_{Y^k}^2 \coloneqq \|v\|_{H^2}^2 +
|t|^2$, and quantities with a \emph{tilde} refer to \enquote{autonomized}
quantities, that is with the additional time component.

\medskip
\textbf{Modified autonomous system.} Let $v(t)\in H^k(\R^d)$ be the
solution to \eqref{eq:GP_v} with initial condition $v(t_0) = v_0$.
Then, the extended variable $\tv(t) = \binom{v(t)}{t} \in
H^k(\R^d)\times \R$ is solution to
\begin{equation}\label{eq:mod_GP_v}
  \begin{cases}
    \partial_t \tv(t) = \binom{\partial_t v(t)}{1} =
    \binom{A(v(t))}{0} + \binom{B(v(t),t)}{1} = \tA(\tv(t)) +
    \tB(\tv(t)), \\
    \tv(t_0) = \binom{v_0}{t_0},
  \end{cases}
\end{equation}
where
\[
  \tA(\tv) = \binom{A(v)}{0} \quad\text{and}\quad
  \tB(\tv) = \binom{B(v,t)}{1}.
\]
Recalling \eqref{eq:flows_splitting_v}, the associated flows read, for an
initial state $\tv_0 = \binom{v_0}{t_0}$,
\[
  \tPhi_{\tA}^t(\tv_0) = \binom{\Phi_A^t(v_0)}{t_0} \quad\text{and}\quad
  \tPhi_{\tB}^t(\tv_0) = \binom{\Phi_B^{t,t_0}(v_0)}{t_0+t}.
\]
One can easily check that
\[
  \begin{cases}
    \partial_t \tPhi_{\tA}^t(\tv) = \tA(\tPhi_{\tA}^t(\tv)),\\
    \tPhi_{\tA}^0(\tv) = \tv_0,
  \end{cases}\quad\text{and}\quad
  \begin{cases}
    \partial_t \tPhi_{\tB}^{t}(\tv) = \tB(\tPhi_{\tB}^t(\tv)),\\
    \tPhi_{\tB}^0(\tv) = \tv_0.
  \end{cases}
\]
Moreover, from \eqref{eq:prop_continuity_A} and
Proposition~\ref{prop_continuity}, we have for any $\tvphi =
\binom{\varphi}{t_0}\in Y^k_T$,
\[
  \|\tPhi_{\tA}^t(\tvphi) \|_{Y^k} \leq \|\varphi\|_{H^k} + \sqrt{t}\|\phi\|_{X^k} +
  |t_0|
\]
and
\[
  \|\tPhi_{\tB}^t(\tvphi) \|_{Y^k} \leq \| \Phi_B^{t,t_0}(\varphi) \|_{H^2} + |t_0+t|
  \leq \exp(Ct) + |t_0+t|,
\]
where $C>0$ depends on $\| \varphi\|_{H^k}$, $\| \phi\|_{H^{k}}$ and $\| V \|_{\mathcal{C}^0_T H^k}$.

The Lie and Strang splitting schemes to go from $\tv^n =
\binom{v^n}{t^n}$ to $\tv^{n+1} = \binom{v^{n+1}}{t^{n+1}}$ then
write as follows.
\begin{description}
  \item[Lie] We compute
    \[
      \begin{split}
        \tv^{n+1}_L &= \tPhi_{\tB}^\tau \circ \tPhi_{\tA}^\tau (\tv^n)
        = \tPhi_{\tB}^\tau \binom{\Phi_A^\tau(v^n)}{t^n}
        = \binom{\Phi_B^{\tau,t^n}(\Phi_A^\tau(v^n))}{t^n + \tau}
        = \binom{v^{n+1}_L}{t^{n+1}},
      \end{split}
    \]
    where $v^{n+1}_L = \Phi_B^{\tau,t^n}(\Phi_A^\tau(v^n))$ is the
    (non-autonomous) Lie splitting without the additional time component.
  \item[Strang] We compute
    \[
      \begin{split}
        \tv^{n+1}_S &= \tPhi_{\tA}^{\tau/2} \circ \tPhi_{\tB}^{\tau}
        \circ \tPhi_{\tA}^{\tau/2} (\tv^n)
        = \tPhi_{\tA}^{\tau/2} \circ \tPhi_{\tB}^\tau
        \binom{\Phi_A^{\tau/2}(v^n)}{t^n} \\
        & = \tPhi_{\tA}^{\tau/2}
        \binom{\Phi_B^{\tau,t^n}(\Phi_A^{\tau/2}(v^n))}{t^n + \tau}
        = \binom{\Phi_A^{\tau/2}
          (\Phi_B^{\tau, t^n}(\Phi_A^{\tau/2}(v^n)))}
        {t^n + \tau}
        = \binom{v^{n+1}_S}{t^{n+1}}
      \end{split}
    \]
    where $v^{n+1}_S = \Phi_A^{\tau/2}(\Phi_B^{\tau,
      t^n}(\Phi_A^{\tau/2}(v^n))$ is the (non-autonomous) Strang splitting without the
    additional time component.
\end{description}
As a conclusion, (i) the PDE \eqref{eq:mod_GP_v} is an \emph{autonomous} PDE and
(ii) the Lie and Strang splitting schemes on the associated operators $\tA$ and
$\tB$ are equivalent to the same splitting schemes for the non-autonomous
version. One can thus apply the exact same strategy as the one we used in the
autonomous case in order to derive convergence rates for both schemes, provided
that the error bounds and commutator bounds from
Lemmas~\ref{lemma_nonlinear_operator_estimate}-\ref{lemma_commutator_estimate}
are adapted accordingly.

\medskip
\textbf{Derivatives and bounds for the modified system} The Fréchet derivatives
of $\tA$ and $\tB$ can be computed as follows, for any
$\tvphi = \binom{\varphi}{t}$ and $\tpsi = \binom{\psi}{s}$:
\begin{equation}\label{eq:frechet_modified_flow}
  \tA'(\tvphi)[\tpsi] = \binom{A'(\varphi)[\psi]}{0} \quad\text{and}\quad
  \tB'(\tvphi)[\tpsi] = \binom{\partial_\varphi B(\varphi,t)[\psi] + \partial_t
    B(\varphi,t)s}{0},
\end{equation}
where $\partial_\varphi B(\varphi,t)$ is the Fréchet derivative of $\varphi
\mapsto B(\varphi,t)$, which coincides with the Fréchet derivative of $B$ in the
autonomous case with fixed $t$. Note also that
\[
  \partial_tB(\varphi,t) = -i \partial_tV(t)(\phi+\varphi).
\]

\begin{lemma} \label{lemma_nonlinear_operator_estimate_nonautonom}
  For any $\tvphi$, $\tpsi$, $\tchi \in Y^2_T(\R^d)$, we
  have
  \[ \begin{split}
      \| \tB (\tvphi) \|_{Y^2} & \leq C_d \left( \| \phi\|_{X^2} +
        \|\varphi\|_{H^2} \right) \left(1 + \| V\|_{\mathcal{C}^0_T H^2} + \| \phi\|_{X^2}^2 + \|\varphi\|_{H^2}^2 \right)
      +1\\
      \| \tB' (\tvphi) [\tpsi] \|_{Y^2} & \leq
      C_d \| \psi\|_{H^2} \left(1 + \| V\|_{\mathcal{C}^0_T H^2} + \| \phi\|_{X^2}^2 + \|\varphi\|_{H^2}^2 \right)
      + C_d\| V\|_{{\mathcal{C}^1_T H^2}}(\| \phi \|_{X^2} + \|\varphi\|_{H^2})T \\
      \| \tB'' (\tvphi) [\tpsi,\tchi] \|_{Y^2}
      & \leq C_d \| \psi\|_{H^2} \| \chi\|_{H^2} \left( \| \phi\|_{X^2} +
          \|\varphi\|_{H^2} \right)
        + C_d \| V \|_{C^1_T H^2} (\| \psi\|_{X^2} + \|\chi\|_{H^2} )T  \\
      & \quad + C_d \| V \|_{C^2_T H^2} (\| \phi\|_{X^2} + \|\varphi\|_{H^2} )T^2.
    \end{split} \]
\end{lemma}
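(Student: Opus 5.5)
The plan is to reduce everything to the autonomous bounds of Lemma~\ref{lemma_nonlinear_operator_estimate}, applied at frozen time $t$, plus explicit estimates for the new terms coming from the time component. Throughout, write $\tvphi=\binom{\varphi}{t}$, $\tpsi=\binom{\psi}{s}$, $\tchi=\binom{\chi}{r}$ with $t,s,r\in[0,T]$. Use $\|\binom{w}{a}\|_{Y^2}\le \|w\|_{H^2}+|a|$. Because the time-component bounds are needed for $s,r\in[0,T]$, the factors $|s|,|r|$ are bounded by $T$; this is where the factors $T$ and $T^2$ in the statement come from.

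\textbf{First bound.} Since $\tB(\tvphi)=\binom{B(\varphi,t)}{1}$, we get $\|\tB(\tvphi)\|_{Y^2}\le \|B(\varphi,t)\|_{H^2}+1$. Then apply the first estimate of Lemma~\ref{lemma_nonlinear_operator_estimate} with $V$ replaced by $V(t)$, and bound $\|V(t)\|_{H^2}\le\|V\|_{\mathcal{C}^0_TH^2}$.

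\textbf{Second bound.} By \eqref{eq:frechet_modified_flow}, the time component of $\tB'(\tvphi)[\tpsi]$ vanishes. Its $H^2$ component splits into two pieces:
\begin{itemize}
\item $\partial_\varphi B(\varphi,t)[\psi]$, which is controlled by the second estimate of Lemma~\ref{lemma_nonlinear_operator_estimate} at frozen time $t$;
\item $\partial_tB(\varphi,t)s=-i\,s\,\partial_tV(t)(\phi+\varphi)$, whose $H^2$ norm is at most $C_d T\|\partial_tV(t)\|_{H^2}(\|\phi\|_{X^2}+\|\varphi\|_{H^2})$ by \eqref{eq:product_Zhidkov} and \eqref{eq:sobolev_algebra}. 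Since $\|\partial_t V(t)\|_{H^2}\le \|V\|_{\mathcal{C}^1_TH^2}$, this gives the extra term in the statement.
\end{itemize}

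\textbf{Third bound.} Differentiate $\tvphi\mapsto \partial_\varphi B(\varphi,t)[\psi]+s\,\partial_tB(\varphi,t)$ once more in the direction $\tchi$. This produces four terms:
\begin{itemize}
\item $\partial_\varphi^2B(\varphi,t)[\psi,\chi]$, which equals $B''(\varphi)[\psi,\chi]$ from the autonomous computation, since the $V$-term in $B$ is linear in $\varphi$. It is controlled by the third estimate of Lemma~\ref{lemma_nonlinear_operator_estimate}.
\item The two mixed terms $r\,\partial_t\partial_\varphi B(\varphi,t)[\psi]=-i\,r\,\partial_tV(t)\psi$ and $s\,\partial_\varphi\partial_tB(\varphi,t)[\chi]=-i\,s\,\partial_tV(t)\chi$. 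Each is bounded by $C_dT\|V\|_{\mathcal{C}^1_TH^2}$ times $\|\psi\|_{H^2}$ or $\|\chi\|_{H^2}$ respectively.
\item The pure time term $s\,r\,\partial_t^2B(\varphi,t)=-i\,s\,r\,\partial_t^2V(t)(\phi+\varphi)$, which is bounded by $C_dT^2\|V\|_{\mathcal{C}^2_TH^2}(\|\phi\|_{X^2}+\|\varphi\|_{H^2})$.
\end{itemize}
Summing these four bounds gives the third estimate.

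The only delicate points are bookkeeping. One must check that $\tB''$ is well defined, which requires $V\in\mathcal{C}^2_TH^2$; this is exactly the regularity assumed in the Strang case, and the Lie case only uses the first two bounds. One must also read the statement's $\|\psi\|_{X^2}$ as $\|\psi\|_{H^2}$, and replace $|s|,|r|$ by $T$ using $\tpsi,\tchi\in Y^2_T$. No genuine obstacle is expected beyond a careful expansion of the second derivative, where the mixed terms must not be forgotten.
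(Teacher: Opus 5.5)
Your proposal is correct and matches the paper's proof essentially line by line. The time component contributes $1$ (resp.\ $0$), the $\varphi$-derivatives are handled by Lemma~\ref{lemma_nonlinear_operator_estimate} at frozen time, and the new terms $-i\,s\,\partial_tV(t)(\phi+\varphi)$, $-i\,\partial_tV(t)(r\psi+s\chi)$ and $-i\,sr\,\partial_t^2V(t)(\phi+\varphi)$ are bounded using $|s|,|r|\le T$.

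Your reinterpretation of $\|\psi\|_{X^2}$ as $\|\psi\|_{H^2}$ is unnecessary. Applying \eqref{eq:product_Zhidkov} with $\psi$ as the Zhidkov factor and $\partial_tV(t)\in H^2$ gives the statement as written. That version is in fact slightly sharper, since $\|\psi\|_{X^2}\le C\|\psi\|_{H^2}$.
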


\begin{proof}
  The first bound simply follows from
  \[
    \|\tB(\tvphi)\|_{Y^2} \leq \|B(\varphi,t)\|_{H^2} + 1
  \]
  and the use of the first bound in
  Lemma~\ref{lemma_nonlinear_operator_estimate}, with $\| V\|_{\mathcal{C}^0_T H^2}$
  to bound the potential term.
  As for the second bound, we directly get from \eqref{eq:frechet_modified_flow}
  and $\tpsi = \binom{\psi}{s}$
  \[
    \| \tB' (\tvphi) [\tpsi] \|_{Y^2} = \| \partial_\varphi B(\varphi,t)[\psi] +
    \partial_t B(\varphi,t)s \|_{H^2} \leq
    \| \partial_\varphi B(\varphi,t)[\psi] \|_{H^2} + \|\partial_t B(\varphi,t)
    \|_{H^2}|s|
  \]
  and the result follows as before from the second bound of
  Lemma~\ref{lemma_nonlinear_operator_estimate} together with
  \[
    \|\partial_t B(\varphi,t)\|_{H^2}|s| \leq C_d\|\partial_t V\|_{\mathcal{C}^0_TH^2}
    (\| \phi \|_{X^2} + \|\varphi\|_{H^2})T.
  \]
  The bound on $\tB''$ similarly follows noticing that, for
  $\tpsi = \binom{\psi}{s}$ and $\tchi = \binom{\chi}{r}$,
    \[  \begin{split}
    \tB'' (\tvphi) [\tpsi,\tchi]  & = \binom{
      \partial_\varphi^2 B(\varphi,t)[\psi,\chi]
      + \partial_\varphi(\partial_tB(\varphi,t)){[r\psi+s\chi]}
      + \partial_t^2 B(\varphi,t) sr}{0} \\
        & = \binom{\partial_\varphi^2 B(\varphi,t)[\psi,\chi]
        - i \partial_t V(t){(r\psi+s\chi)}
        -i \partial_t^2 V(t)(\phi +\varphi) sr}{0}
    \end{split}  \]
  and taking the $H^2$ norm of the first component together with the third bound
  of Lemma~\ref{lemma_nonlinear_operator_estimate}.
\end{proof}

\begin{lemma} \label{lemma_commutator_estimate_nonautonom}
  For any $\tvphi$, $\tpsi \in Y^4_T(\R^d)$, we have
  \[ \begin{split}
      \| [ \tA,\tB ] (\tvphi) \|_{ Y^2} & \leq
      C_d \left( \| V\|_{\mathcal{C}^0_T H^4}^2 + \| \phi\|_{X^4}^3+\|\varphi \|_{H^4}^3  \right), \\
      \| [ \tA,\tB ]' (\tvphi) [\tpsi] \|_{Y^2} & \leq
      C_d \|\psi\|_{H^4} \left(\| V\|_{\mathcal{C}^0_TH^4}^2
        + \| \phi\|_{X^4}^2+\|\varphi \|_{H^4}^2 \right)
        + C_d \|V\|_{\mathcal{C}^1_TH^4}(\|\phi\|_{X^3} + \|\varphi\|_{H^3})T,
    \end{split} \]
  and for any $\tvphi \in Y^6_T$,
  \[\| [\tA, [\tA,\tB] ]  (\tvphi) \|_{ Y^2} \leq  C_d \left(\| V\|_{\mathcal{C}_T^0 H^6}^2 + \| \phi\|_{X^6}^3+\|\varphi \|_{H^6}^3  \right) .  \]
\end{lemma}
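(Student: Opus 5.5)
The plan is to reduce everything to the autonomous computations of Lemma~\ref{lemma_commutator_estimate}, with $t$ frozen, and to track the extra terms coming from the time component. First, for $\tvphi=\binom{\varphi}{t}$, use \eqref{eq:frechet_modified_flow}: since $\tA'(\tvphi)[\tpsi]$ ignores the time component and $\tA(\tvphi)$ has zero time component, we get
\[
[\tA,\tB](\tvphi)=\tA'(\tvphi)[\tB(\tvphi)]-\tB'(\tvphi)[\tA(\tvphi)]
=\binom{A'(\varphi)[B(\varphi,t)]-\partial_\varphi B(\varphi,t)[A(\varphi)]}{0}
=\binom{[A,B(\cdot,t)](\varphi)}{0}.
\]
The term $\partial_tB\cdot s$ drops out because the time component of $\tA(\tvphi)$ vanishes. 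So $[\tA,\tB](\tvphi)$ is exactly the autonomous commutator with $V=V(t)$ frozen. The first bound then follows from Lemma~\ref{lemma_commutator_estimate}, after bounding $\|V(t)\|_{H^4}\le \|V\|_{\mathcal{C}^0_TH^4}$.

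Second, differentiate this expression in $\tvphi$ along $\tpsi=\binom{\psi}{s}$. The $\varphi$-derivative gives $[A,B(\cdot,t)]'(\varphi)[\psi]$, which Lemma~\ref{lemma_commutator_estimate} controls by $C_d\|\psi\|_{H^4}(\cdots)$. The $t$-derivative only hits the potential terms in the explicit formula for $[A,B]$, namely $-\Delta V(\phi+\varphi)-2\nabla V\cdot(\nabla\phi+\nabla\varphi)$. It therefore equals $s\big(-\Delta\partial_tV\,(\phi+\varphi)-2\nabla\partial_tV\cdot\nabla(\phi+\varphi)\big)$. Its $H^2$ norm is bounded via \eqref{eq:product_Zhidkov} and \eqref{eq:sobolev_algebra} by $C_d\|\partial_tV\|_{H^4}\big(\|\phi\|_{X^3}+\|\varphi\|_{H^3}\big)|s|$, and $|s|\le T$ gives the second bound.

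Third, for the double commutator, apply the same mechanism once more:
\[
[\tA,[\tA,\tB]](\tvphi)=\tA'(\tvphi)\big[[\tA,\tB](\tvphi)\big]-[\tA,\tB]'(\tvphi)\big[\tA(\tvphi)\big].
\]
Since both $[\tA,\tB](\tvphi)$ and $\tA(\tvphi)$ have zero time component, the $\partial_t$ contribution computed in the second step is multiplied by $s=0$ and disappears. Hence $[\tA,[\tA,\tB]](\tvphi)=\binom{[A,[A,B(\cdot,t)]](\varphi)}{0}$, and the third estimate of Lemma~\ref{lemma_commutator_estimate}, with $\sup_t\|V(t)\|_{H^6}$, concludes.

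The main obstacle is being careful that the vanishing time components really kill all the $\partial_tV$ terms in the first and third estimates, with no hidden contribution through the Fréchet derivative of $\tA$. Because $\tA$ does not depend on $t$ at all, this should be immediate. The regularity bookkeeping for the $\partial_tV$ term in the second estimate is then routine.
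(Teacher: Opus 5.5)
Your proposal is correct and follows essentially the same route as the paper. Both identify $[\tA,\tB](\tvphi)$ and $[\tA,[\tA,\tB]](\tvphi)$ with the frozen-time autonomous commutators, since the $\partial_t B$ contributions vanish against the zero time component of $\tA(\tvphi)$; both then bound the extra term $s\,\big(-\Delta\partial_tV\,(\phi+\varphi)-2\nabla\partial_tV\cdot\nabla(\phi+\varphi)\big)$ in the derivative in the same way. The only cosmetic difference is that you read this term off the explicit formula for $[A,B]$, whereas the paper computes it as $[A,\partial_tB(\cdot,t)](\varphi)$.
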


\begin{proof}
  The first commutator bound follows after noting that
  \[
    [\tA,\tB](\tvphi) = \tA'(\tvphi)[\tB(\tvphi)] - \tB'(\tvphi)[\tA(\tvphi)]
    = \binom{A'(\varphi)[B(\varphi,t)] -
      \partial_{\varphi}B(\varphi,t)[A(\varphi)]}{0} =
    \binom{[A,B(\cdot,t)](\varphi)}{0}
  \]
  and using the first bound from Lemma~\ref{lemma_commutator_estimate}. From
  this last expression we directly infer that, for $\tpsi = \binom{\psi}{s}$,
  \[
    \|[ \tA,\tB ]' (\tvphi) [\tpsi]\|_{Y^2}
    = \|\partial_\varphi ([A,B(\cdot,t)])(\varphi)[\psi] +
    \partial_t ([A,B(\cdot,t)])(\varphi)s\|_{H^2}
  \]
  Noting that the Fréchet derivative of $\varphi \mapsto -i\partial_t
  V(t)(\phi+\varphi)$ is just the multiplication by $-i\partial_t V(t)$, we
  compute
  \[
    \begin{split}
      \partial_t ([A,B(\cdot,t)])(\varphi) &= [A,\partial_t B(\cdot,t)](\varphi)
      = [A,-i\partial_tV(t)(\phi+\cdot)](\varphi) \\
      &= A'(\varphi)[-i\partial_tV(t){(\phi+\varphi)}] -
      (-i\partial_tV(t))A(\varphi) \\
      &= -\partial_t(\Delta V(t))(\phi+\varphi) - 2\partial_t(\nabla V(t))\cdot
      (\nabla \phi + \nabla \varphi),
    \end{split}
  \]
  we obtain, with the second bound of Lemma~\ref{lemma_commutator_estimate},
  \[ \begin{split}
      \|[ \tA,\tB ]' (\tvphi) [\tpsi]\|_{Y^2} &\leq
      \|\partial_\varphi ([A,B(\cdot,t)])(\varphi)[\psi] \|_{H^2} +
      \|\partial_t ([A,B(\cdot,t)])(\varphi)s\|_{H^2} \\
      & \leq C_d \|\psi\|_{H^4} \left(\| V\|_{\mathcal{C}^0_TH^4}^2
        + \| \phi\|_{X^4}^2+\|\varphi \|_{H^4}^2 \right)
        + C_d \|V\|_{\mathcal{C}^1_TH^4}(\|\phi\|_{X^3} + \|\varphi\|_{H^3})T,
    \end{split}
  \]
  which yields the second bound. Finally, from \eqref{eq:frechet_modified_flow}, we compute
  \[ [\tA, [\tA,\tB]
    ]  (\tvphi)= \binom{[A, [A,B(\cdot,t)] ](\varphi)}{0}, \]
  from which we obtain the last bound using once again the third bound from
  Lemma~\ref{lemma_commutator_estimate}.
\end{proof}

We now move to the study of the local error for both Lie and Strang splitting
schemes.

\medskip
\textbf{(Lie splitting)}
Let $\tv_0 = \binom{v_0}{t_0} \in Y^4_T$. Since $\| \tv(\tau) - \tv_L^{1}
\|_{Y^2_T} = \| v(\tau) - v_L^{1} \|_{H^2}$, it remains to show, as in the
autonomous case, that
\[
  \| \tv(\tau) - \tv_L^{1} \|_{Y^2_T} \leq \tC_L \tau^2,
\]
where $\tC_L=C(\|v_0\|_{H^4}, \| V\|_{\mathcal{C}^0_TH^4}, \| \phi \|_{X^4},T) >0$. Following the proof of Proposition~\ref{prop:local_error}, it holds
\[
  \tv(\tau) - \tv_L^1 = \tR_0^{(L)} + \tR_1 - \tR_2^{(L)}
\]
where
\[
  \begin{split}
    \tR_0^{(L)} &= \int_0^{\tau} \exp(s D_{\tA}) D_{\tB} \exp((\tau-s)D_{\tA})
    \Id(\tv_0) \dd s - \tau \exp(\tau D_{\tA}) D_{\tB} \Id(\tv_0), \\
    \tR_1       &= \int_0^{\tau} \int_0^s \exp(\sigma D_{\tH}) D_{\tB}
    \exp((s-\sigma)D_{\tA} ) D_{\tB} \exp ( (\tau-s)D_{\tA}) \Id(\tv_0) \dd
    \sigma \dd s,\\
    \tR_2^{(L)} &= \tau^2 \int_0^1  (1-\theta) \exp(\tau D_{\tA}) \exp(\theta \tau
    D_{\tB})D_{\tB}^2 \Id(\tv_0) \dd \theta.
  \end{split}
\]
Hence, using \eqref{eq:R0_Lie}-\eqref{eq:R1_Lie}-\eqref{eq:R2_Lie} with
Lemmas~\ref{lemma_nonlinear_operator_estimate_nonautonom} and
\ref{lemma_commutator_estimate_nonautonom} to bound the integrands in each of
these terms, we get
\[
  \begin{split}
    \|\tR_0^{(L)}\|_{Y^2_T} &\leq \tC_0 \tau^2 \quad\text{with}\quad
    \tC_0 = C(\|v_0\|_{H^4}, \|\phi\|_{X^4}, \| V\|_{\mathcal{C}^0_TH^4}, T)>0, \\
    \|\tR_1\|_{Y^2_T}       &\leq \tC_1 \tau^2 \quad\text{with}\quad
    \tC_1 = C(\|v_0\|_{H^2}, \| \phi \|_{X^2}, \|V\|_{\mathcal{C}^1_TH^2}, T)>0, \\
    \|\tR_2^{(L)}\|_{Y^2_T} &\leq \tC_2 \tau^2 \quad\text{with}\quad
    \tC_2 = C(\|v_0\|_{H^2}, \| \phi \|_{X^2}, \|V\|_{\mathcal{C}^1_TH^2}, T)>0.
  \end{split}
\]

\medskip
\textbf{(Strang splitting)} We now assume that $\tv_0 = \binom{v_0}{t_0} \in Y^6_T$, and we write that
  \[ v(\tau)-v_s = \tR_0^{(S)} + \tilde{R}_3^{(S)} + \tilde{R}_4^{(S)} + \tilde{R}_5^{(S)}, \]
  where $\tR_0^{(S)}$, $\tilde{R}_3^{(S)}$, $\tilde{R}_4^{(S)}$ and $\tilde{R}_5^{(S)}$ are defined as in the proof of Proposition~\ref{prop:local_error}
  with $\tA$ and $\tB$ instead of $A$ and $B$. We then infer from Lemmas~\ref{lemma_nonlinear_operator_estimate_nonautonom} and
  \ref{lemma_commutator_estimate_nonautonom} that
  \[
    \begin{split}
      \|\tR_0^{(S)}\|_{Y^2_T} &\leq \tC_0 \tau^3 \quad\text{with}\quad
      \tC_0 = C(\|v_0\|_{H^6}, \|\phi\|_{X^6}, \| V\|_{\mathcal{C}^0_TH^6}, T)>0, \\
      \|\tR_3^{(S)}\|_{Y^2_T} &\leq \tC_3 \tau^3 \quad\text{with}\quad
      \tC_3 = C(\|v_0\|_{H^4}, \| \phi \|_{X^4}, \|V\|_{\mathcal{C}^1_TH^4}, T)>0, \\
      \|\tR_4^{(S)}\|_{Y^2_T} &\leq \tC_4 \tau^3 \quad\text{with}\quad
      \tC_4 = C(\|v_0\|_{H^2}, \| \phi \|_{X^2}, \|V\|_{\mathcal{C}^2_TH^2}, T)>0, \\
      \|\tR_5^{(S)}\|_{Y^2_T} &\leq \tC_5 \tau^3 \quad\text{with}\quad
      \tC_5 = C(\|v_0\|_{H^2}, \| \phi \|_{X^2}, \|V\|_{\mathcal{C}^2_TH^2}, T)>0.
    \end{split}
  \]
\medskip
To wrap things up, projecting on the first component of $Y^2_T$ we just showed the non-autonomous version of
Proposition~\ref{prop:local_error}:
\begin{proposition}[Non-autonomous case] \label{prop:local_error_nonautonom}
  We have

    \medskip
    \textbf{(Lie splitting)} $\| v(\tau) - v_L^1 \|_{H^2} \leq C_L\tau^2$,
    where $C_L=C(\|v_0\|_{H^4}, \| V\|_{\mathcal{C}^0_TH^4},\| V\|_{\mathcal{C}^1_TH^2}, \| \phi \|_{X^4}, T),$

    \medskip
        \textbf{(Strang splitting)}  $\| v(\tau) - v_S^1 \|_{H^2} \leq {C}_S \tau^3$,     where ${C}_S={C}(\|v_0\|_{H^6}, \| V\|_{\mathcal{C}^0_T H^6},
       \| V\|_{\mathcal{C}^1_T H^4},
    \| V\|_{\mathcal{C}^2_T H^2}, \| \phi \|_{X^6}, T)$.

\end{proposition}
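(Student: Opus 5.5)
The plan is to deduce the statement from the autonomized system \eqref{eq:mod_GP_v}. The underlying observation is that the extended Lie and Strang iterations $\tv_L^1$, $\tv_S^1$ coincide componentwise with $\binom{v_L^1}{t_0+\tau}$ and $\binom{v_S^1}{t_0+\tau}$, as computed above. The exact extended solution is $\tv(\tau)=\binom{v(t_0+\tau)}{t_0+\tau}$. Hence the time components of the errors cancel, and
\[
\|\tv(\tau)-\tv_L^1\|_{Y^2}=\|v(\tau)-v_L^1\|_{H^2},\qquad \|\tv(\tau)-\tv_S^1\|_{Y^2}=\|v(\tau)-v_S^1\|_{H^2}.
\]
It therefore suffices to bound the local error of the \emph{autonomous} problem $\partial_t\tv=\tA(\tv)+\tB(\tv)$ in $Y^2_T$. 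For this I would repeat verbatim the Lie-derivative argument of Proposition~\ref{prop:local_error}, with $(A,B,H)$ replaced by $(\tA,\tB,\tH)$.

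\textbf{Algebraic step.} I first check that the abstract identities used there remain valid for the extended vector fields: the affine Duhamel formula \eqref{eq:affine_Duhamel}, and the representation formulas of Lemmas~\ref{lem:app0}, \ref{lem:app3} and \ref{lem:app4}. This holds because $\tA$ is still affine, with $\tA'(\tvphi)[\tpsi]=\binom{-i\Delta\psi}{0}$ independent of $\tvphi$ and $\tA''\equiv0$, and because $(\tPhi^t_{\tA})'=\mathrm{diag}(e^{-it\Delta},1)$ is an isometry on $Y^k$. Then:
\begin{itemize}
\item For Lie, the same splitting gives $\tR_0^{(L)}+\tR_1-\tR_2^{(L)}$. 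The term $\tR_0^{(L)}$ is a right-rectangle quadrature error whose integrand derivative is $e^{-i(\tau-s)\Delta}[\tB,\tA](\tPhi_{\tA}^s(\tv_0))$.
\item For Strang, the decomposition is $\tR_0^{(S)}+\tR_3^{(S)}+\tR_4^{(S)}+\tR_5^{(S)}$, with $\tR_0^{(S)}$ a midpoint error involving $[\tA,[\tA,\tB]]$.
\end{itemize}

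\textbf{Analytic step.} I bound each integrand using Lemmas~\ref{lemma_nonlinear_operator_estimate_nonautonom} and \ref{lemma_commutator_estimate_nonautonom} in place of Lemmas~\ref{lemma_nonlinear_operator_estimate} and \ref{lemma_commutator_estimate}, together with the continuity bounds $\|\tPhi_{\tA}^t(\tvphi)\|_{Y^k}\le\|\varphi\|_{H^k}+\sqrt t\|\phi\|_{X^k}+|t_0|$ and the analogous bound for $\tPhi_{\tB}^t$ coming from Proposition~\ref{prop_continuity}. The exact solution's intermediate values $v(\sigma)$ are controlled in $H^2$ (respectively $H^4$, $H^6$) by Proposition~\ref{prop:high_reg}. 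The time components are bounded by $T$. Counting powers of $\tau$ exactly as before then gives:
\begin{itemize}
\item $\tau^2$ from $\int_0^\tau\theta\,\dd\theta$, the double integral in $\tR_1$, and the prefactor $\tau^2$ in $\tR_2^{(L)}$, for Lie;
\item $\tau^3$ from the Peano kernel $K_1$, the two-dimensional first-order quadrature on $K_\tau$, the triple integral in $\tR_4^{(S)}$, and the extra $\theta\tau$ in $\tR_5^{(S)}$, for Strang.
\end{itemize}

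\textbf{Tracking constants.} The constants inherit the dependencies in the two lemmas. For Lie, $\mathcal{C}^1_TH^2$ enters through $\partial_tB$ inside $\tB'$. For Strang:
\begin{itemize}
\item $\mathcal{C}^1_TH^4$ enters through $\partial_t[A,B]$ in $[\tA,\tB]'$, which appears in $\partial_s g$ for $\tR_3^{(S)}$;
\item $\mathcal{C}^2_TH^2$ enters through $\partial_t^2B$ in $\tB''$, which appears in $\tR_4^{(S)}$ and $\tR_5^{(S)}$.
\end{itemize}

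\textbf{Expected obstacle.} The main point requiring care is $\tR_3^{(S)}$. There the $s$-derivative of $g$ hits the non-autonomous commutator, and $[\tA,\tB]'$ applied to an extended direction $\tpsi=\binom{\psi}{s}$ produces the extra $\partial_t V$ terms. I also need $g$ to be differentiable in $(s,\sigma)$ with values in $Y^2$; this is where $V\in\mathcal{C}^1_TH^4$ and $v_0\in H^4$ are used. The remaining terms are routine once the Lie-derivative identities are accepted for the extended fields.
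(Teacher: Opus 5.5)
Your proposal is correct and follows essentially the same route as the paper. You autonomize via $\tv=\binom{v}{t}$, note that the extended Lie and Strang steps reproduce the non-autonomous schemes with cancelling time components, and rerun the error decomposition of Proposition~\ref{prop:local_error} using Lemmas~\ref{lemma_nonlinear_operator_estimate_nonautonom}--\ref{lemma_commutator_estimate_nonautonom} in place of Lemmas~\ref{lemma_nonlinear_operator_estimate}--\ref{lemma_commutator_estimate}; your extra observations (that $(\tPhi^t_{\tA})'=\mathrm{diag}(e^{-it\Delta},1)$ so the appendix identities transfer, and that only directions with nonzero time component generate the $\partial_t V$ and $\partial_t^2 V$ terms) simply make explicit what the paper leaves implicit.
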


\subsection{Convergence estimates}  \label{sec:convergence}
We now have all the elements to turn to the proof of our main result.

\begin{proofof}{Proposition}{\ref{prop:convergence_splitting_v}}

  \medskip
  \textbf{(Lie splitting)} We prove by induction that $(v_L^n)_{n\in\N}$ is
  uniformly bounded with respect to $\tau$ in $H^2(\R^d)$, which will induce the
  convergence result. More precisely, we show that if we denote
  $m_4=\|v\|_{\mathcal{C}^0_T H^4}$, then for all $ 0 \leq n \leq N$, we have
  $v^n \in B_{m_4+1}$, where $B_{m_4+1}$ denotes the centered ball of radius
  $1+m_4$ in $H^2(\R^d)$. The initialization is direct as $v^0_L=v(0)$. Let us
  now assume that $v^k_L \in B_{m_4+1}$ for all $0\leq k \leq n$. Using
  respectively the local error estimate from Proposition~\ref{prop:local_error}
  in the autonomous case or Proposition~\ref{prop:local_error_nonautonom} in
    the non-autonomous case, together with the stability estimate from Proposition
  \ref{prop:stability} with $M_2 = 1+m_4$, we write that
  \begin{align*}
    \| v(t_{n+1})-v^{n+1}_L \|_{H^2} & \leq \|v(t_{n+1}) -
    \Phi_L^{\tau,t_n}(v(t_n)) \|_{H^2} + \| \Phi_L^{\tau,t_n}(v(t_n)) -
    \Phi_L^{\tau,t_n}(v^n_L) \|_{H^2} \\
    & \leq C(m_4) \tau^2 + e^{\tau C(M_2)} \| v(t_n) -  v^n_L \|_{H^2} \\
    & \leq C(m_4) \tau^2 \sum_{k=0}^n e^{\tau C(M_2)k}
  \end{align*}
  where the last inequality is obtained by a recursive argument. Note that $C(m_4)$ and $C(M_2)$ are constants which depend respectively on $m_4$ and $M_2$ among other parameters, but which are uniform with respect to $\tau$ and $n$. As a sum of terms
  of a geometric sequence and as $(n+1)\tau \leq  T$, this gives, using
    $1+\tau C(M_2) \leq e^{\tau C(M_2)}$,
  \begin{equation} \label{eq_recursive_convergence}
    \|v(t_{n+1})-v^{n+1}_L \|_{H^2}
    \leq C(m_4) \tau^2 \frac{e^{C(M_2)T}-1}{e^{\tau C(M_2)} - 1}
    \leq C(m_4) \tau \frac{e^{C(M_2)T}-1}{C(M_2)}.
  \end{equation}
   Hence, for $\tau \leq \tau_L \coloneqq \frac{C(M_2)}{C(m_4)(e^{C(M_2)T}-1)}$ we get
  \[ \| v^{n+1}_L \|_{H^2} \leq \| v(t_{n+1})\|_{H^2}+\|v(t_{n+1})-v^{n+1}_L
    \|_{H^2}  \leq m_4+1.\]
  This yields $v^{n+1} \in B_{m_4+1}$ which ends the induction proof. The convergence
  result is a direct consequence of equation \eqref{eq_recursive_convergence}.

  \medskip
  \textbf{(Strang splitting)} Assuming $v^n \in B_{m_6+1}$ with
  $m_6=\|v\|_{\mathcal{C}^0_T H^6}$, the exact same way we infer that
  \begin{align*}
    \| v(t_{n+1})-v^{n+1}_S \|_{H^2} & \leq \|v(t_{n+1}) -
    \Phi_S^{\tau,t_n}(v(t_n)) \|_{H^2} + \| \Phi_S^{\tau,t_n}(v(t_n)) -
    \Phi_S^{\tau,t_n}(v^n_S) \|_{H^2} \\
    & \leq C(m_6) \tau^3 + e^{\tau C(M_2)} \| v(t_n) -  v^n_S \|_{H^2} \\
    & \leq C(m_6) \tau^2 \frac{e^{C(M_2)T}-1}{C(M_2)}.
  \end{align*}
  Similarly, for $\tau^2 \leq {\tau}_S^2 \coloneqq
  \frac{C(M_2)}{C(m_6)(e^{C(M_2)T}-1)}$ we
  get $v^{n+1}_S \in B_{m_6+1}$, which concludes the bootstrap and
  gives the appropriate convergence rate.
\end{proofof}

\bigskip
We are now able to conclude the proof of our main result.

\begin{proofof}{Theorem}{\ref{theorem_convergence}}
  With the help of Lemma~\ref{lem:equivalence_splittings} one recursively computes
    for the Lie splitting, from $u_0 = \phi + v_0$,
    \[
      \begin{split}
        u_\mathcal{L}^{n+1} &= \Phi_\mathcal{L}^{\tau,t_n}(u_\mathcal{L}^n)
        = \Phi_\mathcal{B}^{\tau,t_n} \circ \Phi_\mathcal{A}^\tau (u^n_\mathcal{L})
        = \Phi_\mathcal{B}^{\tau,t_n} \circ \Phi_\mathcal{A}^\tau (\phi + v_L^n)
        = \Phi_\mathcal{B}(\phi + \Phi_A^\tau(v_L^n)) \\
        &= \phi + \Phi_B^{\tau,t_n}\circ\Phi_A^\tau (v_L^n) = \phi +
        \Phi_L^{\tau,t_n}(v_L^n) = \phi + v_L^{n+1}.
      \end{split}
    \]
    Therefore, we simply write for the Lie scheme that
  \[ \| u(t_n)-u^n_{\mathcal{L}} \|_{X^2} =  \| (\phi+ v(t_n))-(\phi + v^n_L)
    \|_{X^2} \leq \| v(t_n)- v^n_L \|_{H^2} \]
  as $H^2(\R^d) \subset L^{\infty}(\R^d)$ for $1 \leq d \leq 3$ . The result follows from
  Proposition \ref{prop:convergence_splitting_v}. The same holds for the Strang
  splitting.
\end{proofof}

\begin{remark} \label{rem:influence_eps}
   We now take $\eps>0$ instead of $\eps=1$ in \eqref{eq:GP}. Following the
     computations of Section~\ref{sec:stability} and \ref{sec:local_error}, the
     stability estimates of Proposition~\ref{prop:stability} then writes
    \[ \| \Phi_B^{\tau,t_0}(f) - \Phi_B^{\tau,t_0}(g) \|_{H^2} \leq e^{\tau C(M_2)/\eps^2} \| f - g \|_{H^2},  \]
    while all estimates in Lemmas \ref{lemma_nonlinear_operator_estimate} and
    \ref{lemma_commutator_estimate} are multiplied by $1/\eps^2$ on their right
    hand side. Mimicking the proof of the convergence estimates as above yields
    \begin{equation} \label{eq:bounds_epsilon}
      \| v(t_{n}) - v^{n} \|_{H^2} \leq C \tau^\alpha e^{\tilde{C} T/\eps^2}
    \end{equation}
    for some constants $C$, $\tilde{C}>0$ independent of $\eps$, where $\alpha=1$ for
    the Lie scheme \eqref{eq:Lie_split_u} (that is $v^n=v^n_L$ in \eqref{eq:bounds_epsilon})
    and $\alpha=2$ for the Strang scheme \eqref{eq:Strang_split_u} (that is $v^n=v^n_S$
    in \eqref{eq:bounds_epsilon}). In particular, we observe that the bound
    \eqref{eq:bounds_epsilon} diverges exponentially fast in the singular limit
    $\eps \to 0$.
\end{remark}

\section{Mass and energy} \label{sec:consered_quantities}

\subsection{Preservation of the generalized mass}
One of the very interesting feature of splitting schemes for nonlinear
Schrodinger-type equations is that they inherently preserve the mass (namely the
number of particles of the physical system), which is usually the $L^2$-norm of
the solution. Of course, such quantity makes no sense in the case of equation
\eqref{eq:GP}, as for $u=\phi+v$ the quantity
\[  \mathcal{M}(u)=\int_{\R^d}(1-|u|^2)  = \int_{\R^d} (1-|\phi|^2
  -2\Re(\overline{\phi} v)-|v|^2)  \]
may not be defined for $v \in H^k(\R^d)$ with any $k \in \N$ and $\phi$
satisfying \eqref{eq:requirements_phi}.

However, one can define a notion of \textit{generalized mass} as follows (see
also \cite{BethuelGravejatSautSmets2010,DeLaire2010}). Let $\eta \in
\mathcal{C}_0^{\infty}(\R)$ such that $|\eta(x)| \leq 1$ for $|x| \leq 1$ and
$\eta(x)=0$ for $|x| \geq 2$, with $\| \eta'\|_{L^{\infty}},\|
\eta''\|_{L^{\infty}} \leq 2$, and define for any $R>0$ and $x_0 \in \R^d$ the
function
\[ \eta_{x_0,R}(x) = \eta \left( \frac{|x-x_0|}{R}   \right)   \]
for $x \in \R^d$. One can then define the  quantities
\[ m^+(u)= \inf_{x_0 \in \R^d} \limsup_{R \to \infty} \int_{\R^d} (1-|u|^2)
  \eta_{x_0,R} \quad \text{and} \quad m^-(u)= \inf_{x_0 \in \R^d} \liminf_{R \to
    \infty} \int_{\R^d} (1-|u|^2) \eta_{x_0,R}. \]
Then if $1-|u|^2 \in L^1(\R^d)$, we have $m^+(u)=m^-(u)$ and one can define the
generalized mass $\mathcal{M}(u) \coloneqq m^+(u)=m^-(u)$. It is then well-known (see e.g.
\cite[Theorem 7.7]{DeLaire2010}) that if $u_0$ has finite conserved generalized mass, then
for all $t\in \R$, $u(t)$ has finite generalized mass. We prove the following
conservation of the generalized mass result for our splitting scheme.

\begin{proposition}\label{prop:mass_preserv}
  With the same assumptions as in Theorem \ref{theorem_convergence} with $\tau
  \leq \tau_{\mathcal{L}}$ or $\tau_{\mathcal{S}}$, and the additional assumption that $u_0$ has a finite
  generalized mass $\mathcal{M}(u_0)$, we have $\mathcal{M}(u^n)=\mathcal{M}(u_0)$ for all $0 \leq n \leq N$,
  for either the Lie-Trotter splitting scheme $u^n=u^n_{\mathcal{L}}$
  \eqref{eq:Lie_split_u} or the Strang splitting scheme $u^n=u^n_{\mathcal{S}}$
\eqref{eq:Strang_split_u}.
\end{proposition}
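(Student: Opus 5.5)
It suffices to show that each elementary substep, namely the affine Schrödinger flow $\Phi_\mathcal{A}^t$ and the nonlinear flow $\Phi_\mathcal{B}^{t,t_0}$, preserves the generalized mass (together with the property $1-|u|^2\in L^1(\R^d)$). The conclusion $\mathcal{M}(u^n)=\mathcal{M}(u_0)$ then follows by induction on $n$ for both compositions \eqref{eq:Lie_split_u} and \eqref{eq:Strang_split_u}. The condition $\tau\le\tau_\mathcal{L}$ or $\tau_\mathcal{S}$ is used only to guarantee, via Proposition~\ref{prop:convergence_splitting_v} and Lemma~\ref{lem:equivalence_splittings}, that every intermediate iterate has the form $\phi+w$ with $w$ bounded in $H^2(\R^d)$, so all the quantities below are well defined.

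\textbf{Nonlinear step.} This step is immediate from the explicit formula. We have $\Phi_\mathcal{B}^{\tau,t_0}(\zeta)=e^{-i\tau(1-|\zeta|^2)}e^{-i\int V}\zeta$, a pointwise multiplication by a unimodular factor, so $|\Phi_\mathcal{B}^{\tau,t_0}(\zeta)|=|\zeta|$ pointwise (this is also \eqref{eq:preserved_quantity_Phi_B}). Hence $1-|u|^2$ is unchanged and $m^\pm$ are unchanged.

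\textbf{Linear step.} Write $\xi(t)=e^{-it\Delta}\zeta$, with $\zeta=\phi+w$ and $w\in H^2(\R^d)$. The function $\xi(t)$ solves $i\partial_t\xi=\Delta\xi$, and pointwise
\[
\partial_t(1-|\xi|^2)=-2\Re(\overline{\xi}\partial_t\xi)=2\Im(\overline{\xi}\Delta\xi)\cdot(-1)=-2\,\mathrm{div}\,\Im(\overline{\xi}\nabla\xi),
\]
up to sign. Testing against $\eta_{x_0,R}$ and integrating by parts gives
\[
\Big|\frac{\dd}{\dd t}\int(1-|\xi|^2)\eta_{x_0,R}\Big|\le \frac{2}{R}\|\eta'\|_{L^\infty}\int_{R\le|x-x_0|\le2R}|\xi||\nabla\xi| \le \frac{C}{R}\|\xi\|_{L^\infty}\,\|\nabla\xi\|_{L^2(\{|x-x_0|\ge R\})}\,R^{d/2}.
\]
This Cauchy–Schwarz bound is not good enough in general. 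The better approach is to note that $\xi$ is bounded in $X^2$ (by \eqref{eq:continuity_linear_flow_Zhidkov_Hk} and Sobolev embedding) and that $\nabla\xi\in L^2$ uniformly in $t\in[0,\tau]$. Integrating in time over $[0,\tau]$, the difference $\int(1-|\xi(\tau)|^2)\eta_{x_0,R}-\int(1-|\zeta|^2)\eta_{x_0,R}$ equals $\int_0^\tau\int \Im(\overline{\xi}\nabla\xi)\cdot\nabla\eta_{x_0,R}$. The current $\Im(\overline{\xi}\nabla\xi)$ lies in $L^2$, since $\xi\in L^\infty$ and $\nabla\xi\in L^2$. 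It remains to show that this flux term tends to $0$ as $R\to\infty$; it is then harmless in both the $\limsup$ and the $\liminf$, so $m^\pm$ are preserved.

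\textbf{Main obstacle.} The delicate point is controlling the flux $\int\Im(\overline\xi\nabla\xi)\cdot\nabla\eta_{x_0,R}$. Since $\|\nabla\eta_{x_0,R}\|_{L^2}\sim R^{d/2-1}$, a crude bound gives decay only for $d=1$. I would therefore follow \cite[Theorem 7.7]{DeLaire2010}: decompose $\xi=\phi_t+w_t$, and use that $\Im(\overline{\xi}\nabla\xi)$ is in fact in $L^1$ away from a compact set modulo a divergence-free or decaying part. Alternatively, I would invoke directly that result for the linear flow (the case $V=0$ without nonlinearity is covered by the same argument), which gives that a finite generalized mass is conserved along $e^{-it\Delta}$. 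In a second approach, I would compare $\xi(t)$ with the exact flow of \eqref{eq:GP} over short times. If this also fails, I would fall back on citing \cite{DeLaire2010} for the linear Schrödinger part, which is exactly a special case of their conservation theorem.
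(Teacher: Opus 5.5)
\textbf{The linear step is not proved.} Your treatment of the nonlinear substep, and the reduction to the free flow $\Phi_\mathcal{A}^t$, match the paper. But the linear step, which is the whole content of the proposition, is never actually proved. After observing that the crude bound $R^{d/2-1}\|\nabla\xi\|_{L^2(\{|x-x_0|\geq R\})}$ is insufficient, you only list fallbacks, and none of them closes the argument.
\begin{itemize}
\item The claim that the current is in $L^1$ ``modulo a divergence-free or decaying part'' is unsubstantiated.
\item \cite{DeLaire2010} is a statement about the Gross--Pitaevskii flow, not about $e^{-it\Delta}$ on $\phi+H^2(\R^d)$. Saying the linear case is ``covered by the same argument'' is precisely what has to be shown.
\item Comparing with the exact flow cannot help. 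Smallness of a perturbation $\delta$ in $H^2$ only gives $|\int\Re(\overline{\phi}\delta)\eta_{x_0,R}|\leq C\|\delta\|_{L^2}R^{d/2}$, which is not uniform in $R$.
\end{itemize}
You also announce that $1-|u|^2\in L^1(\R^d)$ is propagated. This is neither proved nor needed: it suffices that the localized integrals $\int(1-|u|^2)\eta_{x_0,R}$ before and after a step differ by $o(1)$ as $R\to\infty$.

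\textbf{The missing idea.} Use the fact that $w(t)=\xi(t)-\phi$ itself, not only $\nabla w(t)$, is bounded in $L^2(\R^d)$ on $[0,\tau]$ by \eqref{eq:continuity_linear_flow_Zhidkov}. Then split the current rather than estimating $\Im(\overline{\xi}\nabla\xi)$ as a whole. Set $\Omega_{x_0,R}=\{R<|x-x_0|<2R\}$.
\begin{itemize}
\item The terms $2\Im\int\overline{w}(\nabla w+\nabla\phi)\cdot\nabla\eta_{x_0,R}$ are bounded by $C\|w\|_{L^2(\Omega_{x_0,R})}(1+\|\nabla w\|_{L^2})$. This uses $\|\nabla\eta_{x_0,R}\|_{L^\infty}\leq 2/R$ rather than the $L^2$ norm of $\nabla\eta_{x_0,R}$.
\item The term $2\Im\int\overline{\phi}\nabla w\cdot\nabla\eta_{x_0,R}$ is integrated by parts to put the gradient on $\overline{\phi}\nabla\eta_{x_0,R}$. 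This gives $C\|w\|_{L^2(\Omega_{x_0,R})}$, since $\|\Delta\eta_{x_0,R}\|_{L^2}\lesssim R^{(d-4)/2}$ is bounded for $d\leq 3$.
\item After integrating in time, both contributions vanish as $R\to\infty$ by dominated convergence applied to $\int_0^t\int_{\Omega_{x_0,R}}|w|^2$.
\item Only the time-independent flux $2\Im\int\overline{\phi}\nabla\phi\cdot\nabla\eta_{x_0,R}$ remains. The paper bounds it by H\"older, using the scale-invariant $\|\nabla\eta_{x_0,R}\|_{L^d}$ and $\|\nabla\phi\|_{L^{d/(d-1)}(\Omega_{x_0,R})}$ (with $L^2$ norms when $d=1$).
\end{itemize}

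\textbf{On your dimensional concern.} It is justified only for this last term. Your crude bound in fact already suffices for $d\leq 2$, where $\|\nabla\eta_{x_0,R}\|_{L^2}$ is bounded. For $d=3$, the paper's bound needs $\nabla\phi\in L^{3/2}$ away from a ball. This holds for \eqref{eq:3D_soliton} but is not implied by \eqref{eq:requirements_phi}, and it cannot be bypassed. Take $\phi=e^{i\theta(|x|)}$ with $\theta'(r)=r^{-8/5}$ for $r\geq1$ and $\theta$ constant near $0$. Then $\phi$ satisfies \eqref{eq:requirements_phi} and $\mathcal{M}(\phi)=0$, yet for $x_0=0$ the flux equals $8\pi R^{2/5}\int_1^2 s^{2/5}\eta'(s)\,\dd s$, which is of order $R^{2/5}$. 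So a single linear substep already makes the localized masses diverge unless an extra decay assumption on $\nabla\phi$ is imposed.
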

\begin{proof}
  Note that since $| \Phi_{\mathcal{B}}^t(\zeta)|=|\zeta|$ for all $t \in \R$,
  it suffices to show the result for $\Phi_{\mathcal{A}}^t(\xi)$, hence for a
  solution to the linear equation $i \partial_t u =\Delta u$ with initial
  condition $u(0)=\xi$. As usual, we write $u=\phi+w$ with $w(0)=\xi-\phi \in
  H^2(\R^d)$. In particular $w$ is solution to the affine Schrödinger equation
  $i\partial_t w = \Delta w + \Delta \phi$, and thanks to
  \eqref{eq:continuity_linear_flow_Zhidkov} we have
  \begin{equation} \label{eq:bounds_linear_eq}
    \| w(t) \|_{L^2} \leq \| w_0 \|_{L^2}+ C \sqrt{t} \| \nabla \phi \|_{L^2}
    \quad \text{and} \quad   \| \nabla w(t) \|_{L^2} \leq \| \nabla w_0
    \|_{L^2}+ C \sqrt{t} \| \Delta \phi \|_{L^2}
  \end{equation}
  for all $t \geq 0$. We compute by integration by parts that
  \begin{multline*}
    \frac{\dd}{\dd t} \int_{\R^d}(1-|u(t)|^2) \eta_{x_0,R}  = -2 \Re \int_{\R^d}
    \overline{u(t)} \partial_t u(t) \eta_{x_0,R} \\
    = 2 \Im \int_{\R^d} \overline{w(t)}  (\nabla w(t)+\nabla \phi)  \nabla
    \eta_{x_0,R}   + 2 \Im \int_{\R^d}  \overline{\phi} \nabla w(t)  \nabla
    \eta_{x_0,R}+ 2 \Im \int_{\R^d}  \overline{\phi} \nabla \phi  \nabla
    \eta_{x_0,R}\\
    \eqqcolon J_1(t) + J_2(t)+ J_3(t).
  \end{multline*}
  We then notice that $\| \Delta \eta_{x_0,R} \|_{L^2} \leq C
  R^{\frac{d-4}{2}}$, implying that $\| \nabla \eta_{x_0,R} \|_{L^{\infty}}$
  and $\| \Delta \eta_{x_0,R} \|_{L^2}$ are uniformly bounded in $x_0$ and $R$.
  Writing
  \[ \Omega_{x_0,R} \coloneqq \enstq{x \in \R^d}{R<|x-x_0|<2R} , \]
  we infer from Cauchy-Schwarz inequality that
  \[  \left| J_1(t) \right|  \leq  C_\phi \| w(t) \|_{L^2(\Omega_{x_0,R})}
    \left(1+ \| \nabla w(t) \|_{L^2} \right). \]
  By integration by parts we infer
  \[  J_2(t)=2 \Im \int_{\R^d} w(t) \left( \nabla \overline{\phi} \nabla \eta_{x_0,R} + \overline{\phi} \Delta \eta_{x_0,R} \right)  \]
  which leads to
  \[  \left| J_2(t) \right|  \leq C_\phi \| w(t) \|_{L^2(\Omega_{x_0,R})}. \]
  Finally, noticing that the choice of $\eta$ implies that the quantity $\|
  \nabla \eta_{x_0,R} \|_{L^d}$ is uniformly bounded in $x_0$ and $R$ in any dimension,
  as well as $\| \nabla \eta_{x_0,R} \|_{L^2}$ in dimension one, Hölder
  inequality implies that
  \[ \left| J_3(t) \right| \leq \left\{  \begin{aligned}
        & \| \phi \|_{L^{\infty}} \| \nabla \phi \|_{L^2(\Omega_{x_0,R})} \| \nabla \eta_{x_0,R} \|_{L^2} & \quad \text{for } d=1, \\
        & \| \phi \|_{L^{\infty}} \| \nabla \phi \|_{L^{d/(d-1)}(\Omega_{x_0,R})} \| \nabla \eta_{x_0,R} \|_{L^d} & \quad \text{for } 2\leq d \leq 3.
      \end{aligned} \right. \]
  Gathering these inequalities and integrating in time we then have
  \begin{multline} \label{eq:bound_generalized_mass}
    \left| \int_{\R^d}(1-|u(t)|^2) \eta_{x_0,R} - \int_{\R^d}(1-|u(0)|^2) \eta_{x_0,R} \right| \\
    \leq C_\phi t \| \nabla \phi \|_{L^{d^*}(\Omega_{x_0,R})} + C_\phi \int_0^t  \| w(s) \|_{L^2(\Omega_{x_0,R})} \left( 1+ \| \nabla w(s) \|_{L^2} \right) \dd s
  \end{multline}
  with the convention $d^*=2$ if $d=1$ and $d^*=d/(d-1)$ if $2 \leq d \leq 3$. From \eqref{eq:bounds_linear_eq} we then infer by Cauchy-Schwarz inequality that
  \[   \int_0^t  \| w(s) \|_{L^2(\Omega_{x_0,R})} \left( 1+ \| \nabla w(s) \|_{L^2} \right) \dd s  \leq C_\phi (1+t) \left( \int_0^t \int_{\Omega_{x_0,R}} | w(s,x) |^2 \dd x\dd s \right)^{1/2}.  \]
  We conclude from the dominated convergence theorem in \eqref{eq:bound_generalized_mass} that
  \[ \lim_{R \to \infty} \left( \int_{\R^d}(1-|u(t)|^2) \eta_{x_0,R} - \int_{\R^d}(1-|u(0)|^2) \eta_{x_0,R}  \right) = 0  \]
  and the result follows from the definitions of $m^+(u_0)$, $m^-(u_0)$ and $\mathcal{M}(u_0)$.
\end{proof}

\subsection{Near-conservation of energy}
In the case of a time-independent potential (namely $\partial_t V=0$), the Ginzburg-Landau energy \eqref{eq:GL} is a constant of motion under the flow of \eqref{eq:GP}. We show that this property is nearly conserved by the splitting schemes \eqref{eq:Lie_split_u} and \eqref{eq:Strang_split_u}.

\begin{proposition}\label{prop:nrj_preserv}
  Under the same assumptions as in Theorem \ref{theorem_convergence} with $\tau
  \leq \tau_0$, and denoting $m_2,M_2>0$ two constants such that
  $\sup_{0 \leq t \leq T}  \|u(t)\|_{X^2}\leq m_2$ and
  $\sup_{0 \leq n \leq N} \|u^n\|_{X^2} \leq M_2$, we have
  \[
    |\mathcal{E}(u(t_n)) - \mathcal{E}(u^n)| \leq C(m_2,M_2) \tau^\alpha,
  \]
  for all $0 \leq n \leq N$, with $\alpha=1$ for the Lie-Trotter splitting scheme $u^n=u^n_{\mathcal{L}}$ \eqref{eq:Lie_split_u} and with $\alpha=2$ for the Strang splitting scheme $u^n=u^n_{\mathcal{S}}$ \eqref{eq:Strang_split_u}.
\end{proposition}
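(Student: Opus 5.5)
The plan is to show that the energy \eqref{eq:GL} (with $\partial_t V=0$) is locally Lipschitz on the affine space $\phi+H^2(\R^d)$ for the $X^2$ metric, and then to combine this with the convergence estimates of Theorem~\ref{theorem_convergence}. Write $u(t_n)=\phi+v(t_n)$ and $u^n=\phi+v^n$, with $v^n=v^n_L$ or $v^n_S$; this is possible by Lemma~\ref{lem:equivalence_splittings}. Set $w=v(t_n)-v^n\in H^2(\R^d)$. We then estimate separately the three contributions to $\mathcal{E}(u(t_n))-\mathcal{E}(u^n)$: the gradient part, the Ginzburg--Landau potential part, and the part involving $V$.

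\textbf{Gradient term.} We write
\[ |\nabla u(t_n)|^2-|\nabla u^n|^2=\Re\big(\nabla w\cdot\overline{(\nabla u(t_n)+\nabla u^n)}\big). \]
By Cauchy--Schwarz, its integral is bounded by $\|\nabla w\|_{L^2}\big(\|\nabla u(t_n)\|_{L^2}+\|\nabla u^n\|_{L^2}\big)\le 2\max(m_2,M_2)\|w\|_{H^2}$, since the $X^2$ norm controls $\|\nabla\cdot\|_{L^2}$.

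\textbf{Nonlinear term.} We set $\rho_1=1-|u(t_n)|^2$ and $\rho_2=1-|u^n|^2$, so that
\[ \rho_1^2-\rho_2^2=(\rho_1-\rho_2)(\rho_1+\rho_2), \qquad \rho_2-\rho_1=\Re\big(w\,\overline{(u(t_n)+u^n)}\big). \]
This gives $\|\rho_1-\rho_2\|_{L^2}\le (m_2+M_2)\|w\|_{L^2}$, using the $L^\infty$ part of the $X^2$ norm. The remaining factor $\|\rho_1+\rho_2\|_{L^2}$ is not controlled by the $X^2$ norm alone, and this is the main obstacle. We handle it by writing
\[ \rho_1=1-|\phi|^2-2\Re(\overline\phi v(t_n))-|v(t_n)|^2, \]
and similarly for $\rho_2$. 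Then \eqref{eq:requirements_phi} gives $1-|\phi|^2\in L^2$, and the other pieces are bounded in $L^2$ by $\|\phi\|_{L^\infty}\|v\|_{L^2}+\|v\|_{L^\infty}\|v\|_{L^2}$. The bootstrap of Proposition~\ref{prop:convergence_splitting_v} keeps $v(t_n)$ and $v^n$ in a fixed $H^2$-ball, so these bounds are uniform in $n$ and $\tau$. The constant therefore depends implicitly on $\phi$, $v_0$ and $V$ in addition to $m_2$ and $M_2$; we absorb this into $C(m_2,M_2)$, which is legitimate under the assumptions of Theorem~\ref{theorem_convergence}. The same decomposition handles the potential term: $\int V(\rho_1-\rho_2)$ is bounded by $\|V\|_{L^2}(m_2+M_2)\|w\|_{L^2}$.

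\textbf{Conclusion.} Combining the three estimates gives
\[ |\mathcal{E}(u(t_n))-\mathcal{E}(u^n)|\le C\|v(t_n)-v^n\|_{H^2}. \]
Proposition~\ref{prop:convergence_splitting_v} then yields the bound $C\tau$ for the Lie scheme and $C\tau^2$ for the Strang scheme, for $\tau$ below the thresholds $\tau_L$, $\tau_S$, which play the role of $\tau_0$. The delicate point is that the error must be measured in $H^2$, and not only in the Zhidkov norm: the $X^2$ distance controls the $L^2$ norm of $w$ only because $w$ lies in $H^2$.
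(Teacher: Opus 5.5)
Your proposal is correct and follows essentially the same route as the paper: term-by-term comparison with Cauchy--Schwarz for the kinetic part, the factorization $\rho_1^2-\rho_2^2=(\rho_1-\rho_2)(\rho_1+\rho_2)$ combined with the decomposition $u=\phi+v$ to get $\rho_i\in L^2$, the analogous bound for the potential term, and finally the $H^2$ estimates of Proposition~\ref{prop:convergence_splitting_v}. Your remark that the constant also depends on $\phi$, $v_0$, $V$ (through $L^2$ bounds on $v$) is accurate and is implicit in the paper's $C(\phi,m_2,M_2)$. Your closing sentence is loosely phrased, though: the $X^2$ norm never controls $\|w\|_{L^2}$, and what you (correctly) rely on is the $H^2$ convergence estimate itself.
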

\begin{proof}
  We compare $\mathcal{E}(u(t_n))$ and $\mathcal{E}(u^n)$ term by term. For the kinetic part, we simply write that
  \begin{align*}
    \left| \int_{\R^d} | \nabla u(t_n)|^2 - \int_{\R^d} | \nabla u^n|^2   \right|  &= \left|\int_{\R^d} \nabla u(t_n) \overline{\left( \nabla u(t_n) - \nabla u^n  \right)} + \int_{\R^d} \overline{\nabla u^n} \left( \nabla u(t_n) - \nabla u^n  \right)  \right| \\
    &  \leq \left( \| \nabla u(t_n) \|_{L^2} + \| \nabla u^n \|_{L^2} \right) \| \nabla u(t_n) - \nabla u^n \|_{L^2}
    \leq C (m_2+M_2) \tau^{\alpha}
  \end{align*}
  by the Cauchy-Schwarz inequality and applying Theorem
  \ref{theorem_convergence}. For the nonlinear part, writing that $u=\phi+v$ as
  before with $v \in H^2(\R^d)$, we first remark that
  \begin{align*}
    \| |u|^2-1 \|_{L^2} & = \| |\phi|^2-1 + 2 \Re( \overline{\phi} v) + |v|^2 \|_{L^2}
    \leq \| |\phi|^2-1 \|_{L^2} + \|\phi \|_{L^{\infty}} \| v \|_{L^2} + \| v
    \|_{L^{\infty}} \| v \|_{L^2}
  \end{align*}
  from which we deduce $|u|^2-1 \in L^2(\R^d)$ from \eqref{eq:requirements_phi}, since
  $H^2(\R^d) \subset L^\infty(\R^d)$ for $d \leq 3$. A similar bound holds
  for $u^n$ from Lemma~\ref{lem:equivalence_splittings}. This allows us to write
  that
  \begin{align*}
    &  \left| \int_{\R^d} \left(1- |u(t_n)|^2 \right)^2 - \int_{\R^d} \left(1- |u^n|^2 \right)^2 \right|
    \leq \left| \int_{\R^d} \left( |u(t_n) |^2 -|u^n|^2 \right) \left( 1-|u(t_n) |^2 + 1 - |u^n |^2  \right) \right| \\
    & \quad \leq \left( \| u(t_n) \|_{L^{\infty}} + \| u^n \|_{L^{\infty}}  \right) \left( \| 1 - |u(t_n)|^2 \|_{L^2} + \| 1 - |u^n|^2 \|_{L^2}  \right) \| v(t_n)-v^n \|_{L^2}  \leq C(\phi,m_2,M_2) \tau^{\alpha}
  \end{align*}
  as $u(t_n)-u^n=v(t_n)-v^n$ and using Proposition \ref{prop:convergence_splitting_v}. Finally for the potential part we compute
  \begin{align*}
    &\left| \int_{\R^d} V (1-|u(t_n)|^2) - \int_{\R^d} V (1-|u^n|^2)  \right|  = \left| \int_{\R^d} V u(t_n) \overline{(v(t_n)-v^n)} + \int_{\R^d} V \overline{u^n} (v(t_n)-v^n)  \right| \\
    &  \quad \leq  \| V \|_{L^2} \| v(t_n) - v^n \|_{L^2} \left( \| u(t_n) \|_{L^{\infty}} + \| u^n \|_{L^{\infty}} \right)  \leq C(\|V\|_{L^2},m_2,M_2) \tau^{\alpha}
  \end{align*}
  thanks to Proposition \ref{prop:convergence_splitting_v}, which ends the proof.
\end{proof}

\section{Numerical results} \label{sec:numerics}

We perform in this section a number of numerical tests, first on a one-dimensional dark
soliton to illustrate the convergence rates of Lie and Strang splitting schemes,
and then on the two-dimensional case with a time-dependent potential to highlight the
nucleation of quantum vortices.

\subsection{1D dark soliton}

In dimension $d=1$, as mentioned previously, an explicit solution to
\eqref{eq:GP} with $\varepsilon=1$ and $V=0$ is
given, for any $|c| < \sqrt 2$, by
\begin{equation*}
  u(t,x)=\phi_c(x+ct), \quad \phi_c(x)=\sqrt{\frac{2-c^2}{2}} \tanh \left(
    \frac{\sqrt{2-c^2}}{2} x  \right) + i \frac{c}{\sqrt{2}}.
\end{equation*}
This explicit non-trivial solution can thus be used to illustrate the convergence rate of both
splitting schemes on $u$ \eqref{eq:Lie_split_u}-\eqref{eq:Strang_split_u}
by comparing the numerical solution $u^n$, obtained from the
initial condition $u^0 = \phi_c$ to the explicit solution $u(t^n) =
\phi_c(\cdot + ct_n)$. In what follows, we choose $c=1.3$. The next simulation is
performed with finite differences in a box $[-L,L]$, with $L=20$ and with
Dirichlet boundary conditions given by the values of $\phi_c$ at $\pm \infty$.
We present in Figure~\ref{fig:dark_soliton} both the numerical approximation
$u^n$ and the reference solution at time $t=0$ and $t=T=2$. At least visually,
the two solutions seem to coincide.

\begin{figure}[h!]
  \includegraphics[width=0.59\linewidth]{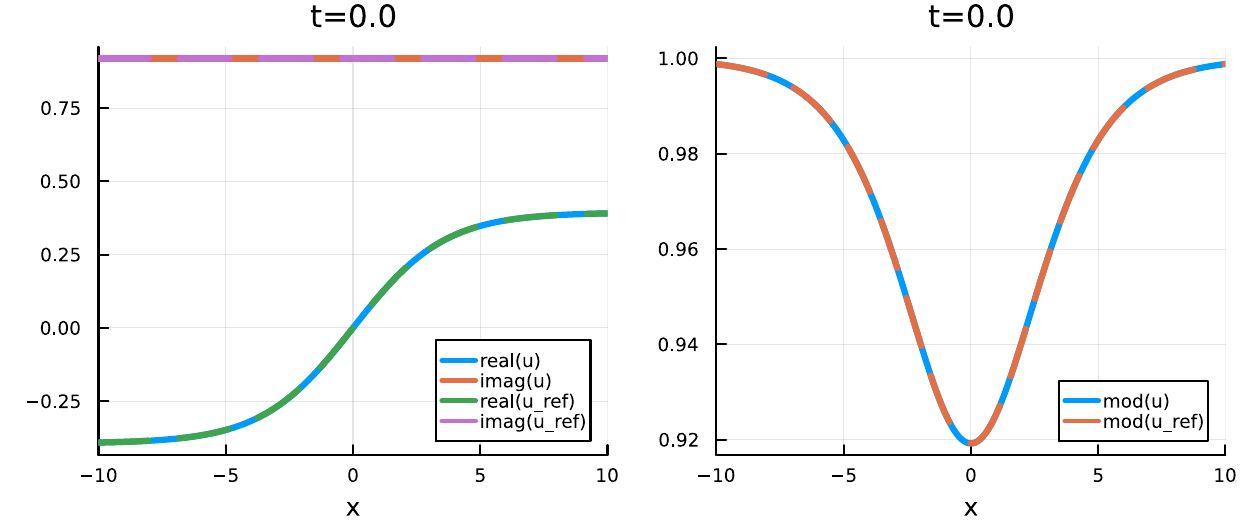}  
  \includegraphics[width=0.59\linewidth]{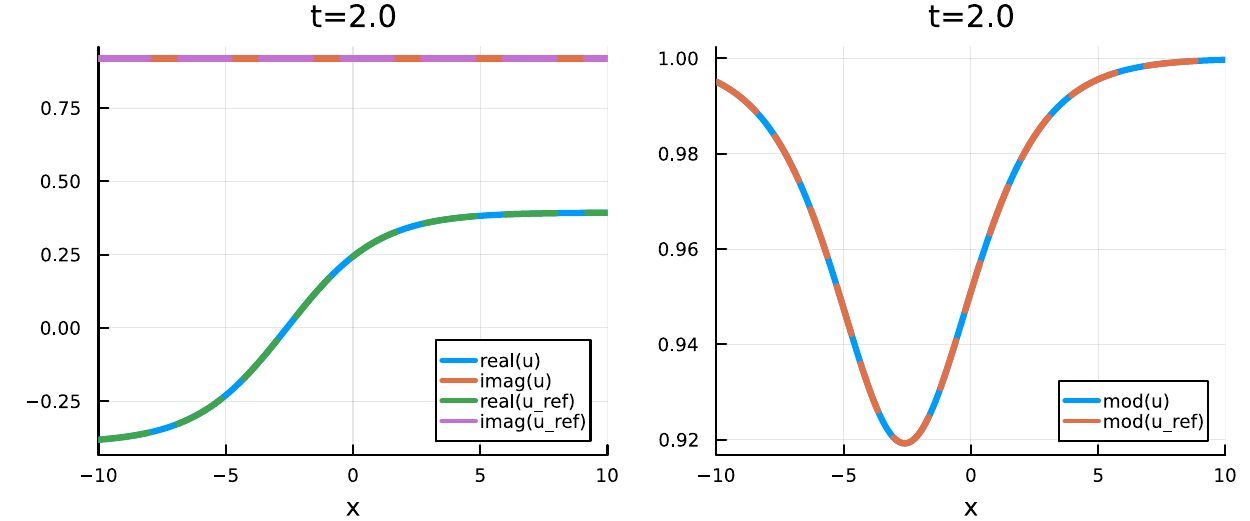}
  \caption{Exact (\texttt{u\_{ref}}) and approximate (\texttt{u}) solutions to
    \eqref{eq:GP} in one dimension, obtained with the Strang splitting scheme,
    at time $t=0$ (top) and $t=2$ (bottom). The spatial domain has been
    restricted to $[-10,10]$ for plot clarity.}
  \label{fig:dark_soliton}
\end{figure}

Next, we illustrate the convergence rates of the Lie and Strang splitting
schemes claimed in Theorem~\ref{theorem_convergence}.  For this experiment, we
set $L=60$ large enough to neglect side effects. First, we display in
Figure~\ref{fig:1D_psv} the near-preservation of the energy together with the
preservation of the mass as time evolves. The energy is not preserved, with a
drift at the final time $T=1$ smaller with $\tau$, while the mass seems to
be effectively preserved, up to the numerical accuracy of $10^{-10}$.

In Figure~\ref{fig:1D_cvg}, we plot the convergence of the $X^2$ error at final
time $T=1$, \emph{i.e.} $\| u^N - u(T)\|_{X^2}$, with respect to
$\tau$, as well as the energy error, \emph{i.e.} $|\mathcal{E}(u^N) -
\mathcal{E}(u(T))|$. As expected, for the $X^2$ norm, the Lie scheme exhibits a
first order convergence while the Strang scheme is second order. However, the
energy seems to be preserved within an order given by twice the order of the
error in the $X^2$ norm. This super-convergence in $\tau$ of the energy can be
explained by the initial condition being the dark soliton $u_0(x) = \phi_c(x)$,
as its particular symmetries allow for a compensation between the kinetic and
the nonlinear part of the Ginzburg-Landau energy. Indeed, we performed the same
experiments with initial condition $u_0(x) = \phi_c(x) - \frac12\exp(-x^2)$ and
a reference solution computed with $\tau_{\rm ref} = 5\cdot10^{-5}$. This time,
the $X^2$ error behaves similarly while the energy at $T=1$ is preserved with
the expected order (namely at first order for the Lie splitting scheme and at
second order for the Strang splitting scheme), see Figure~\ref{fig:1D_sharp},
showing that the result from Proposition~\ref{prop:nrj_preserv} is sharp.

\begin{figure}[h!]
  \includegraphics[width=0.8\linewidth]{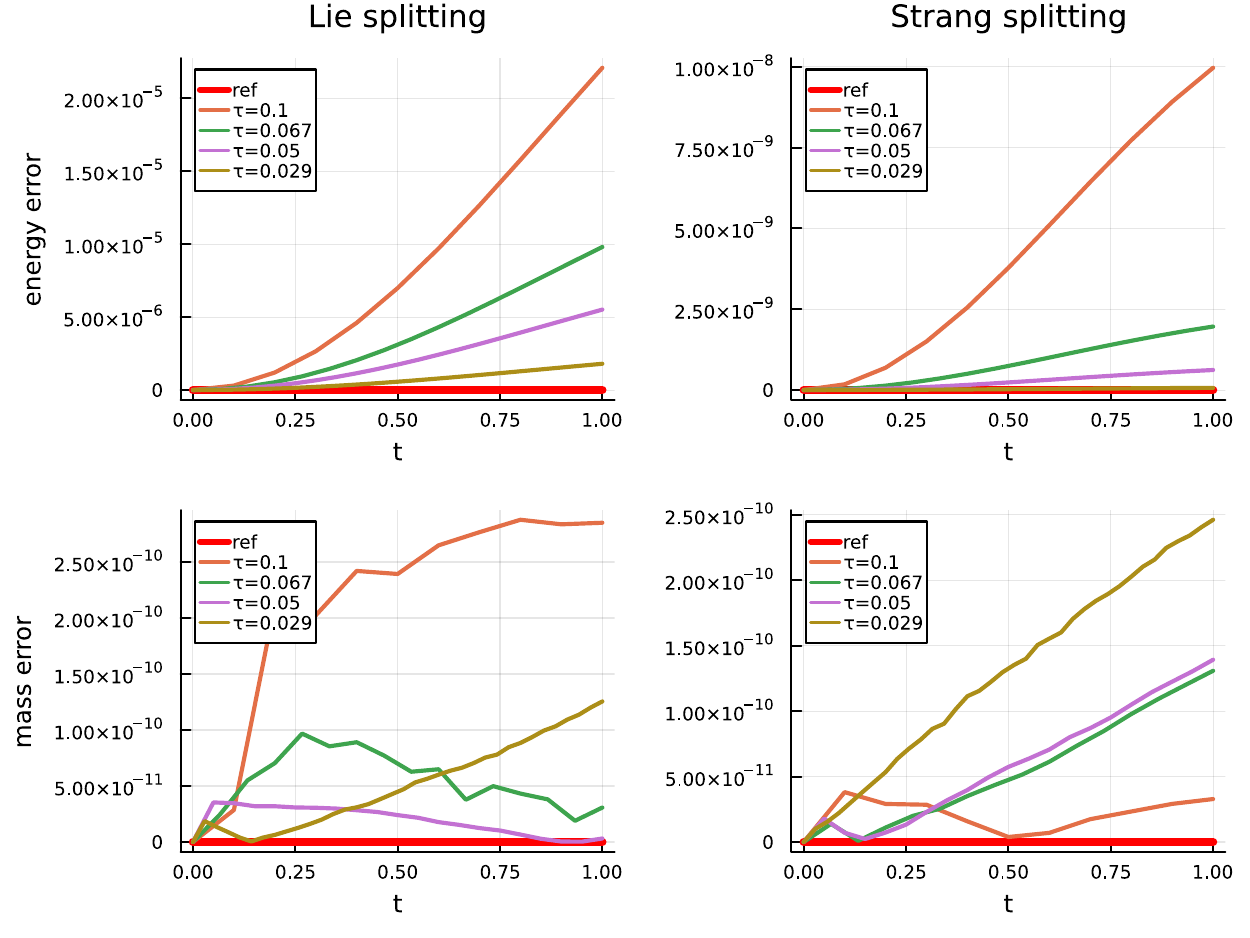}
  \caption{Visual near-preservation of the energy (top) and preservation of the
    mass -- up to numerical accuracy -- for both the Lie (left) and Strang (right)
    splitting schemes.}
  \label{fig:1D_psv}
\end{figure}

\begin{figure}[p!]
  \vspace{2cm}
  \includegraphics[width=0.8\linewidth]{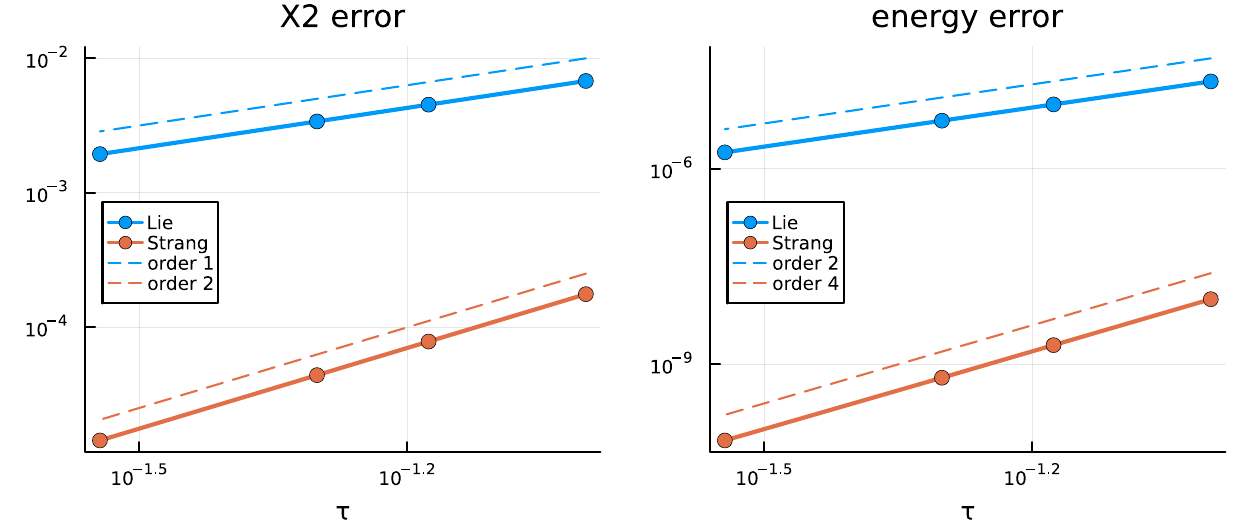}
  \caption{Case where $u_0(x) = \phi_c(x)$ is given by a dark soliton.
    (Left) Convergence in $\tau$ of both Lie and Strang splitting
    schemes. The error is computed as $\|u^N - u(T)\|_{X^2}$, where $u^N$ is the
    approximation at time $T$ and $u(T)$ is the exact solution.
    (Right) Super-convergence in $\tau$ of the energy for both
    Lie and Strang splitting schemes. The error is computed as
    $|\mathcal{E}(u^N) - \mathcal{E}(u(T))|$.}
  \label{fig:1D_cvg}
\end{figure}
\begin{figure}[p!]
  \vspace{2cm}
  \includegraphics[width=0.8\linewidth]{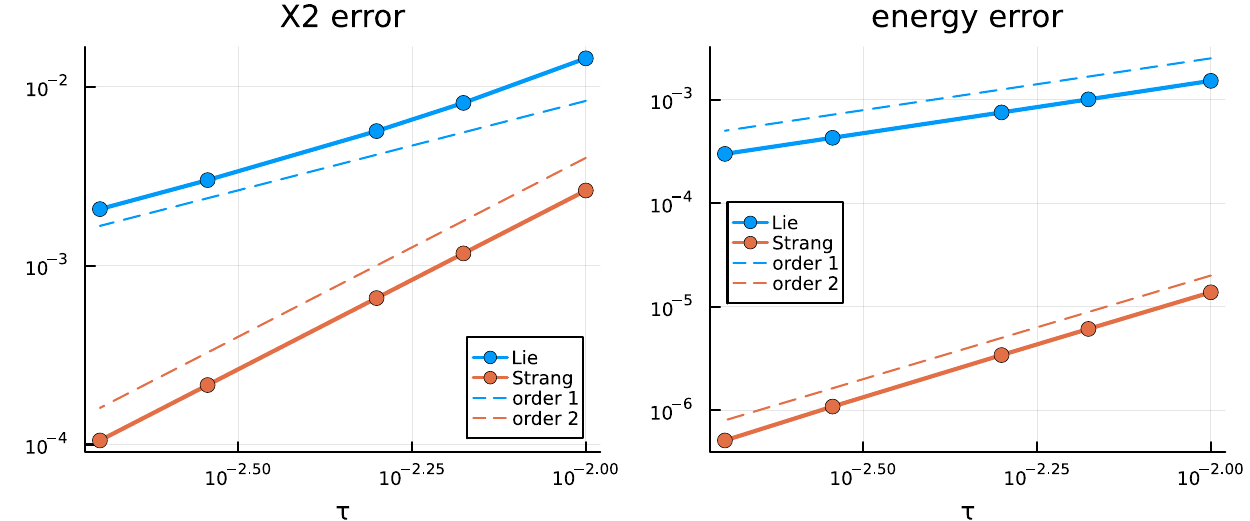}
  \caption{Case where $u_0(x) = \phi_c(x) - \frac12\exp(-x^2)$.
    (Left) Convergence in $\tau$ of both Lie and Strang splitting
    schemes. The error is computed as $\|u^N - u(T)\|_{X^2}$, where $u^N$ is the
    approximation at time $T$ and $u(T)$ is the exact solution.
    (Right) Convergence in $\tau$ of the energy for both
    Lie and Strang splitting schemes. The error is computed as
    $|\mathcal{E}(u^N) - \mathcal{E}(u(T))|$. This time, we observe the expected
   order.}
  \label{fig:1D_sharp}
\end{figure}

\clearpage
\subsection{2D vortex nucleation}
We now place our work within its framework of physical relevance, which is the
theory of Bose-Einstein condensation and quantum turbulence, and present an
application of our numerical scheme in such a setting. More precisely, we now investigate the vortex nucleation process for quantum superfluids whose dynamics
is governed by \eqref{eq:GP} in dimension two. We consider two different
time-dependent potentials $V$, with fixed positive constants for the amplitude
$V_0$, the velocity $a$, the localization $\gamma$ and the span $r_0$ of the
potential, writing $x=(x_1,x_2)\in \R^2$:
\begin{description}
	\item[Case (i)] a linearly moving Gaussian obstacle
	\begin{equation} \label{eq:potential_linear}
	V(t,x)= V_0\exp \left(-\frac{\gamma^2}{2} \left((x_1-at)^2+ x_2^2 \right) \right) ;
	\end{equation}
	\item[Case (ii)] a rotating stirring Gaussian obstacle
		\begin{equation} \label{eq:potential_rotating}
 V(t,x)= V_0\exp \left(-\frac{\gamma^2}{2} \left((x_1-r_0\cos(at))^2+(x_2-r_0\sin(at))^2)\right)  \right).
 		\end{equation}
\end{description}
Physically, we consider a scaled Laplace operator $\frac{1}{2m} \Delta$ in
\eqref{eq:GP} with atomic mass $m>0$, where $m \gg 1$ in order to restrain
dispersive effects and we use large nonlinear constant $1/\eps^2 \gg 1$ to
enhance vortex nucleation.

In both cases, we consider a large spatial periodic domain
$\T^2_L=\left[-L,L\right]^2$ of size $L=5$ to avoid any artificial boundary
effects, discretized with $\mathcal{N}$ points in both directions and integrated in
space by a Fast Fourier Transformation procedure. Time integration is performed
using the Strang splitting \eqref{eq:Strang_split_u} with time step $\tau$.
Both simulations starts from an initial state $u_0$ representing the quantum
fluid at rest, given by the computation of the global minimizer of the energy
\eqref{eq:GL} with potential $V(t=0,x)$, to reduce strong oscillations at the
start of the dynamics due to repulsive effects. Such energy minimizer is
approached by a standard gradient flow, analogously to the methods described in
\cite[Section~3]{BaoCai2013}, and solved with the LBFGS algorithm. Specific
discretization and physical parameters of each simulations are displayed in
Table \ref{table:parameters}. We then provide plots of the density and of the
phase of the numerical approximation of the solution $(u^n_{\mathcal{S}})_{0
  \leq n \leq N}$ at several times, respectively in
Figure~\ref{fig:vortices_linear_obstacle} (Case~(i)) and in
Figure~\ref{fig:vortices_rotating_obstacle} (Case~(ii)).

\begin{table}[ht]
  \centering
  \begin{tabular}{cccccccccc}
    \toprule
    & $\mathcal{N}$ & $\tau$   & $T$   & $\eps$ & $m$ &  $V_0$ & $a$ & $\gamma$ &  $r_0$ \\
    \midrule
    Case (i)   & $2^{12}$ & $10^{-4}$ & $2$ & 0.2     & 15  & $50$ & 1 & 10 & $\emptyset$ \\
    Case (ii)  & $2^{11}$ &  $4\cdot10^{-4}$ & $8$ & 0.2 & 15  & $50$ & 1 & 10 & $0.5$ \\
    \bottomrule
  \end{tabular}
  \medskip
  \caption{Numerical and physical parameters for both cases.}
  \label{table:parameters}
\end{table}

We do observe vortex nucleation for both cases in the chosen physical regimes.
In Figure~\ref{fig:vortices_linear_obstacle}, after a transient state where we
only see wave propagation in front of the potential, we observe the
sought-after nucleation of vortex pairs behind the defect, in a similar way than
in \cite[Figure 3]{Glorieux2025}. Some vortex/anti-vortex pairs first form in a conic-shaped trail of the condensate (see time $t=0.8$ in Figure~\ref{fig:vortices_linear_obstacle}), but after some time we observe that vortex/anti-vortex pairs also nucleate symmetrically with respect to the $x_1$ axis
(see time $t=2.0$ in Figure~\ref{fig:vortices_linear_obstacle}), as already highlighted by previous
studies \cite{HuepeBrachet00}.
In Figure~\ref{fig:vortices_rotating_obstacle}, vortex pairs periodically
nucleate and go straight \emph{outside} of the disc of radius $r_0$, once they
have emerged in the quantum superfluid. Vortex pairs also nucleate
\emph{inside} the disc, and start to interact together (one can observe exchange
of vortices between vortex/anti-vortex pairs). This is reminiscent of the
periodic formation of traveling waves for the one-dimensional Gross-Pitaevskii
flow past an obstacle \cite{Hakim1997}, and also observed in a two-dimensional
settings \cite{HuepeBrachet00}.

\begin{figure}[p!]
  \includegraphics[width=\linewidth]{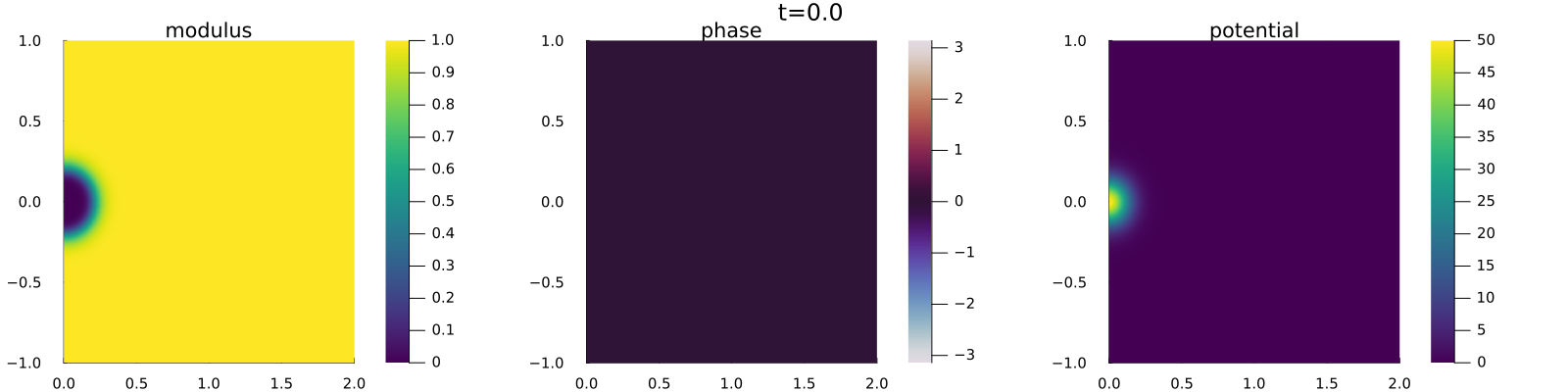}
  \includegraphics[width=\linewidth]{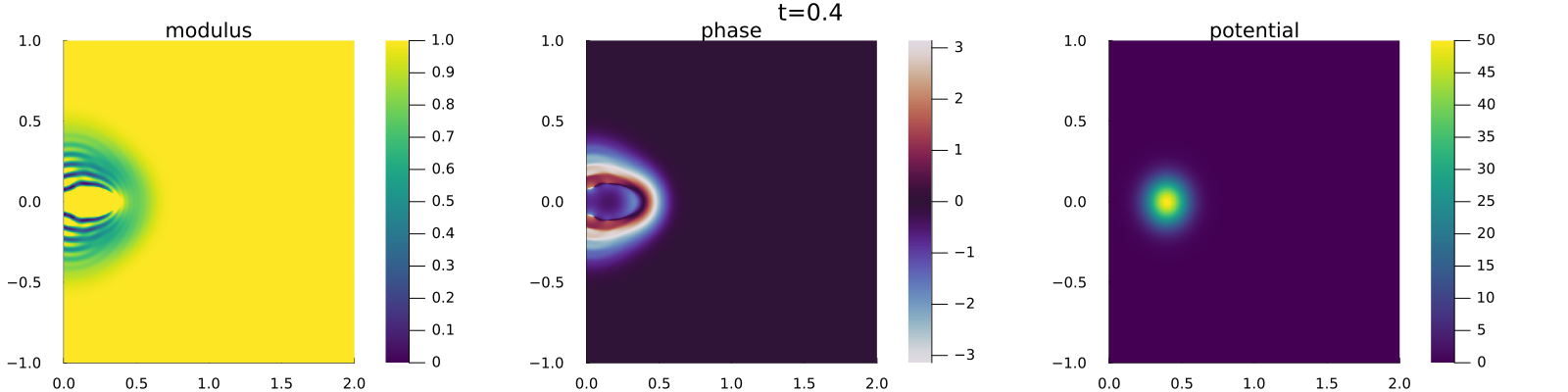}
  \includegraphics[width=\linewidth]{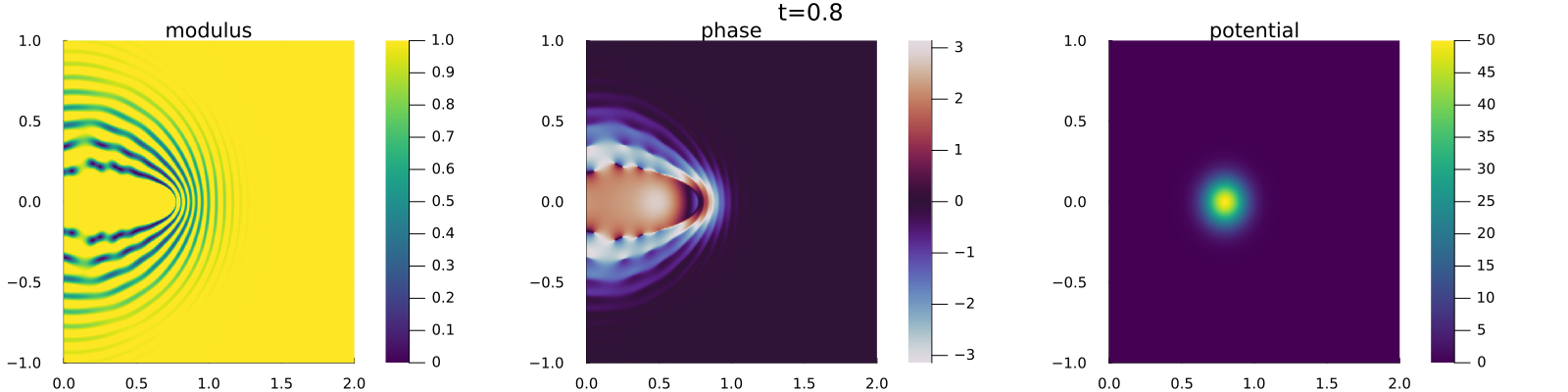}
  \includegraphics[width=\linewidth]{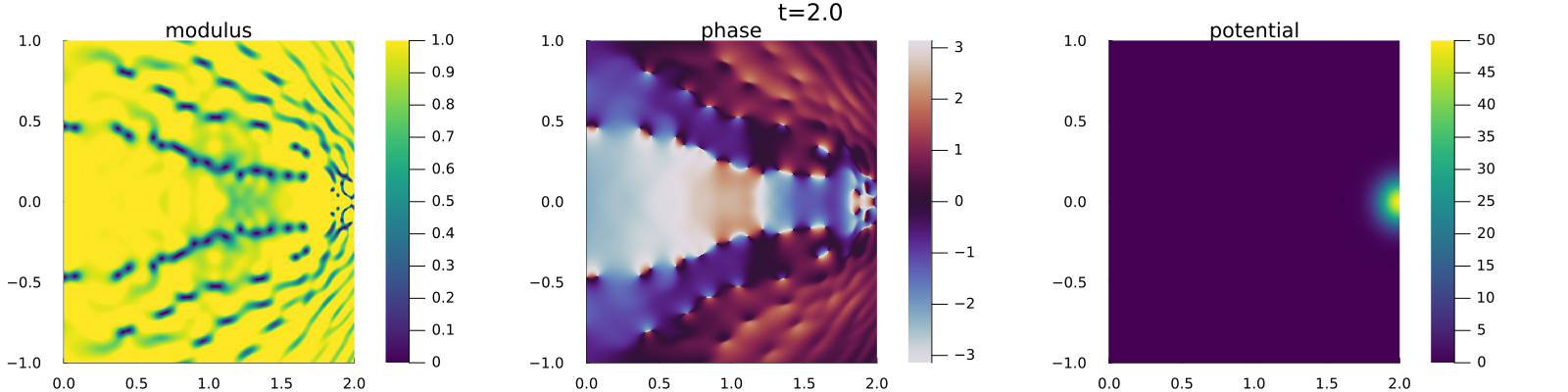}
  \includegraphics[width=\linewidth]{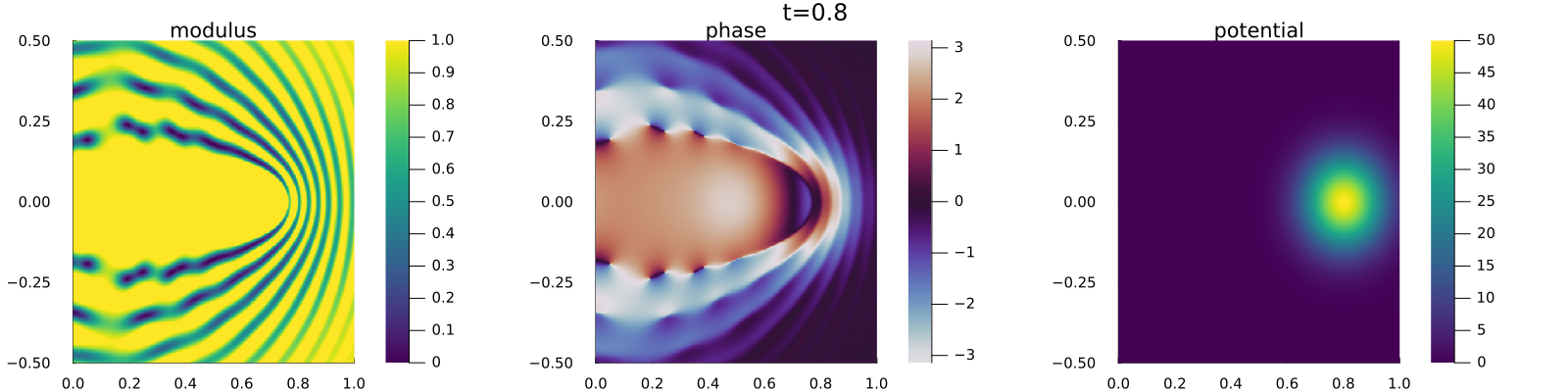}
  \caption{Numerical simulation by the Strang splitting scheme \eqref{eq:Strang_split_u}
    of the solution $u$ to \eqref{eq:GP} with a linearly moving
    Gaussian obstacle \eqref{eq:potential_linear} (Case~(i)) and parameters
    given in Table \ref{table:parameters}. (Left) density $|u|^2$ -- (Middle)
    phase -- (Right) Potential $V(t,\cdot)$, displayed at different times $t=0,
    0.4, 0.8, 2.0$, with a zoom on the cone at $t=0.8$ (bottom line).}
  \label{fig:vortices_linear_obstacle}
\end{figure}

\begin{figure}[p!]
  \includegraphics[width=\linewidth]{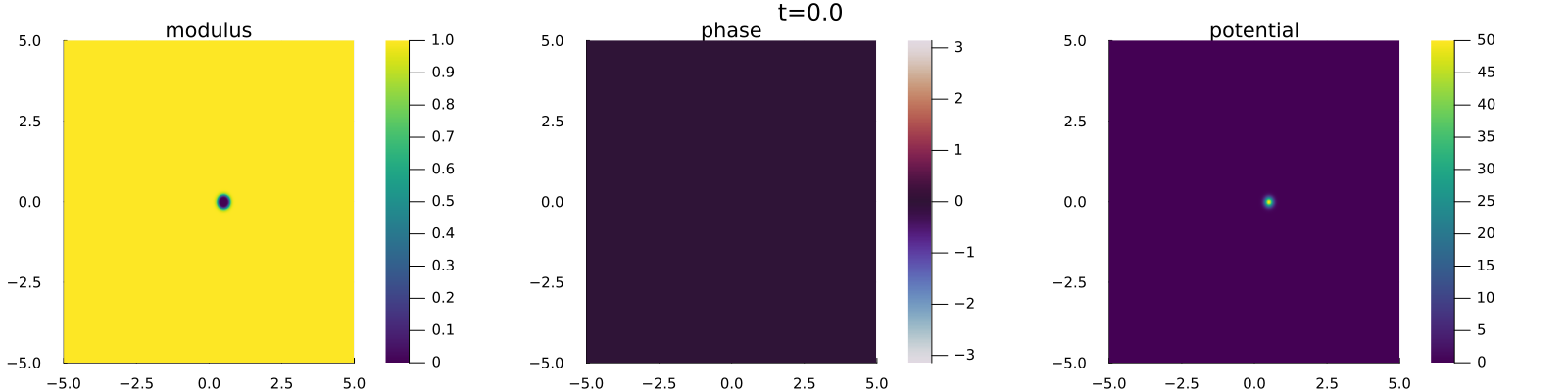}
  \includegraphics[width=\linewidth]{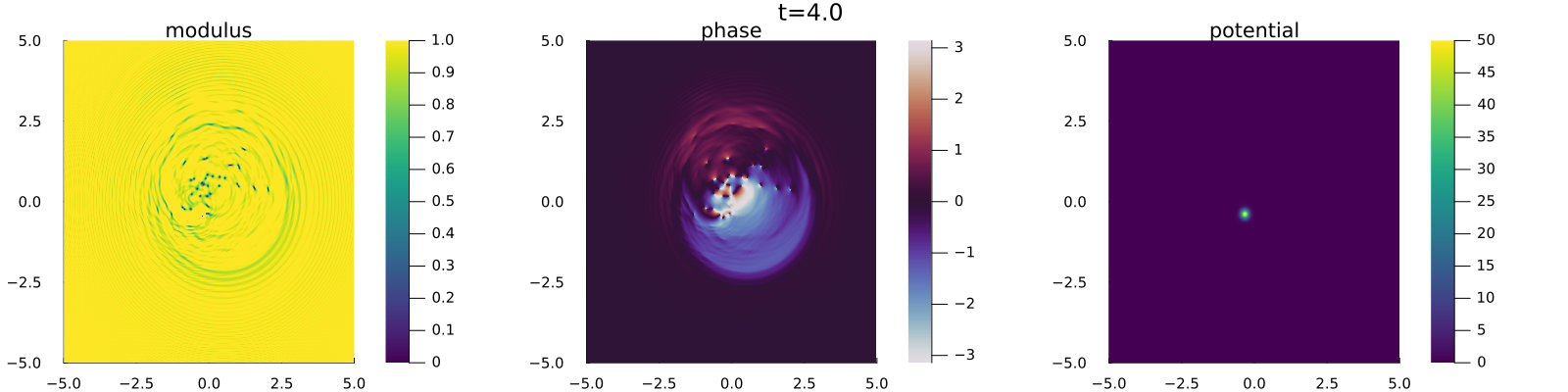}
  \includegraphics[width=\linewidth]{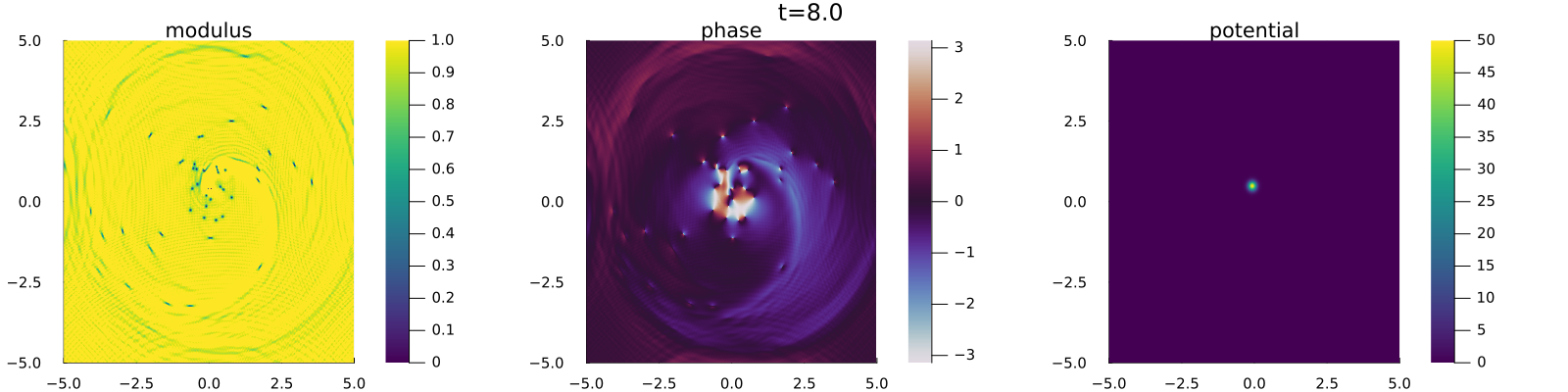}
  \includegraphics[width=\linewidth]{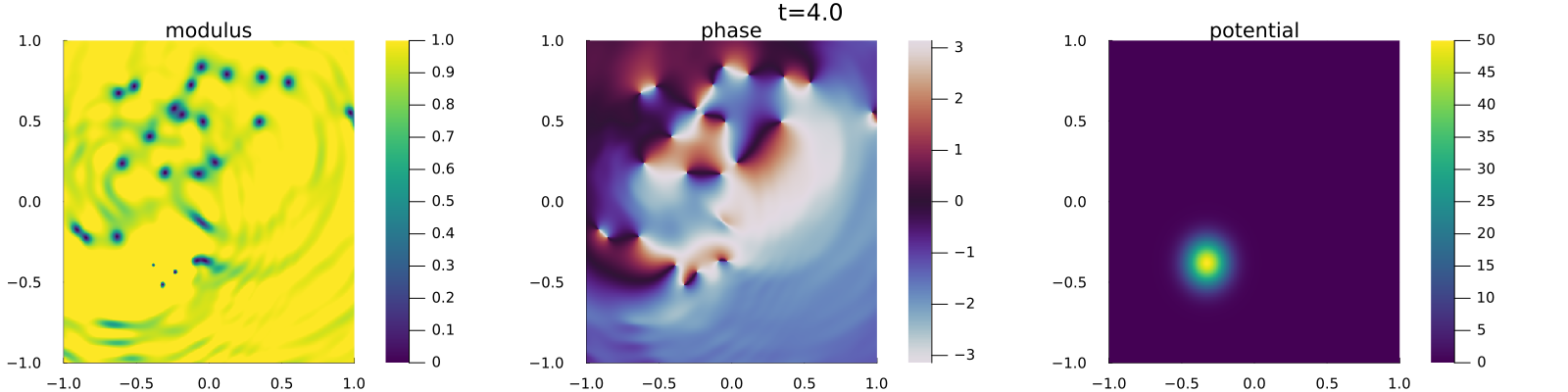}
  \includegraphics[width=\linewidth]{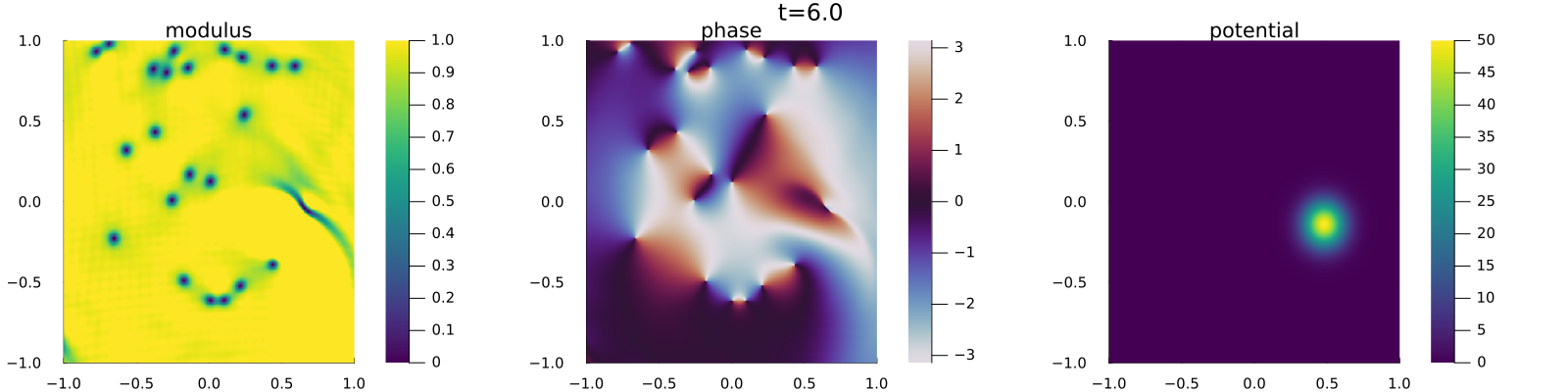}
  \caption{Numerical simulation by the Strang splitting scheme \eqref{eq:Strang_split_u}
    of the solution $u$ to \eqref{eq:GP} with a rotating Gaussian
    obstacle \eqref{eq:potential_rotating} (Case~(ii)) and
    parameters given in Table \ref{table:parameters}. (Left) density $|u|^2$ --
    (Middle) phase -- (Right) Potential $V(t,\cdot)$, displayed at different
    times $t=0, 4.0, 8.0$, with a zoom around the origin at $t=4.0, 6.0$ (two
    bottom lines).}
  \label{fig:vortices_rotating_obstacle}
\end{figure}

\clearpage
\section*{Data availability}

All the simulations have been performed with a custom \texttt{Julia} code, and
can be reproduced by downloading and running the scripts available at
\begin{center}
  \url{https://plmlab.math.cnrs.fr/gkemlin/splitting-gle}
\end{center}

\section*{Acknowledgements}

The authors wish to express their gratitude to André De Laire for several
helpful comments concerning traveling waves and the Cauchy theory of the
Gross-Pitaesvkii equation, and to Erwan Faou for enlightening us about the
superconvergence of the splitting schemes applied to the dark soliton. Q.C.
acknowledges the support of the CDP C2EMPI, together with the French State under
the France-2030 programme, the University of Lille, the Initiative of Excellence
of the University of Lille, the European Metropolis of Lille for their funding
and support of the R-CDP-24-004- C2EMPI project. This work was granted access to
HPC resources of \enquote{Plateforme MatriCS} within University of Picardie
Jules Verne. \enquote{Plateforme MatriCS} is co-financed by the European Union
with the European Regional Development Fund (FEDER) and the Hauts-De-France
Regional Council among others.

\appendix
\section{Technical result on the free Schrödinger flow on Zhidkov
  spaces}\label{app:zhidkov}

\begin{lemma}
  We have the identity
  \[ \frac{e^{-id\pi/4}}{\pi^{d/2}}\lim_{\delta  \to 0}\int_{\R^d} e^{(i-\delta
      )|z|^2} \dd z = 1.    \]
\end{lemma}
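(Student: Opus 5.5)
The identity is a product of one-dimensional Gaussian integrals, so the plan is to reduce to $d=1$ and evaluate a complex Gaussian. Fix $\delta>0$. Since $\Re(i-\delta)=-\delta<0$, the integrand $e^{(i-\delta)|z|^2}$ is absolutely integrable on $\R^d$. Writing $|z|^2=\sum_j z_j^2$, Fubini's theorem factorizes
\[
\int_{\R^d} e^{(i-\delta)|z|^2}\,\dd z=\left(\int_{\R} e^{(i-\delta)s^2}\,\dd s\right)^{d}.
\]
It then suffices to show that $\int_\R e^{-a s^2}\dd s=\sqrt{\pi/a}$ for $a=\delta-i$, using the principal branch of the square root.

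To justify the complex Gaussian formula, consider $a$ in the half-plane $\{\Re a>0\}$. Both sides are holomorphic there: the left side because differentiation under the integral is dominated locally uniformly by $s^2e^{-\Re(a)s^2}$, the right side because the principal branch is holomorphic away from $(-\infty,0]$. For $a>0$ real the two sides agree by the classical Gaussian integral. By the identity theorem they therefore agree on all of $\{\Re a>0\}$. Alternatively, one could rotate the contour, but analytic continuation is cleaner.

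It remains to pass to the limit $\delta\to0$. Since $\delta-i\to -i$ and $-i\notin(-\infty,0]$, continuity of the principal square root gives $\sqrt{\delta-i}\to\sqrt{-i}=e^{-i\pi/4}$. Hence
\[
\lim_{\delta\to0}\int_{\R^d}e^{(i-\delta)|z|^2}\,\dd z=\left(\frac{\sqrt{\pi}}{e^{-i\pi/4}}\right)^{d}=\pi^{d/2}e^{id\pi/4},
\]
and multiplying by $e^{-id\pi/4}/\pi^{d/2}$ yields $1$.

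The main point requiring care is branch consistency. The principal branch is used throughout, the path $a=\delta-i$ stays in the right half-plane, and its limit $-i$ avoids the branch cut, so the limit is taken inside the region where the formula holds. The $d$-th power is taken after the one-dimensional limit, so no multivalued issues arise.
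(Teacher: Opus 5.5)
Your proof is correct, but it evaluates the one-dimensional integral by a different route than the paper.

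After the same reduction to $d=1$, the paper squares $I(\delta)=\int_\R e^{(i-\delta)x^2}\,\dd x$. It computes $I(\delta)^2=\pi/(\delta-i)$ exactly by polar coordinates in $\R^2$, using the explicit antiderivative of $r e^{(i-\delta)r^2}$, and then simply takes ``the square root with positive real part''. That computation is fully elementary. However, it only determines $I(\delta)$ up to a sign, and the paper asserts the sign choice rather than proving it. To close it one would argue, for instance, as follows:
\begin{itemize}
\item $\delta\mapsto I(\delta)/\sqrt{\pi/(\delta-i)}$ is continuous on $(0,\infty)$ with values in $\{\pm1\}$, hence constant;
\item the constant is $+1$ because $I(\delta)\sim\sqrt{\pi/\delta}>0$ as $\delta\to\infty$.
\end{itemize}

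Your analytic-continuation argument builds this branch selection in. The formula $\int_\R e^{-as^2}\,\dd s=\sqrt{\pi/a}$ is anchored on the positive real axis and propagated to $\{\Re a>0\}$ by the identity theorem. You also check explicitly that the limit point $-i$ avoids the branch cut, a step the paper passes over by identifying $I$ with its limit as $\delta\to0$. The cost is invoking holomorphy of a parameter integral and the identity theorem instead of a two-line explicit computation.
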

\begin{proof}
  Denote
  \[  I=\int_\R e^{(i-\delta )x^2} \dd x,   \]
  for all $\delta >0$. Then, making a radial change of coordinates we get
  \[  I^2 =\int_{\R^2} e^{(i-\delta )(x^2+y^2)} \dd x \dd y = 2\pi
    \int_0^{+\infty} r e^{(i-\delta )r^2} \dd r = \frac{\pi}{\delta  - i}. \]
  The quantity $I^2$ has two complex square roots, and taking the one with
  positive real part we infer $I=e^{i\pi/4} \sqrt{\pi}$. The result follows for
  $d=1$, and for $d\geq 1$ by tensorization.
\end{proof}

\section{Technical results on Lie derivatives}\label{app:lie}

We prove in this appendix a number of technical results and manipulation rules
for Lie derivatives that are used throughout the study of the local error terms
in Section~\ref{sec:local_error_taylor}. Note that most of the results are
obtained with careful applications of the chain rule and that analogous
calculations can be found, with different notations, in \cite[Appendix
A]{kochErrorAnalysisHighorder2013}.
Let us first recall that the main novelty here is that, in our case, the vector field $A$
is affine: $A(\varphi) = - i \Delta\varphi - i\Delta\phi$.
Thus the flow $\Phi_A^t$ is affine too and the
Fréchet derivative of $\varphi\mapsto \Phi_A^t(\varphi)$ is given by
$(\Phi_A^t)'(\varphi)[\psi] = e^{-it\Delta}\psi$, which we write, in short,
$(\Phi_A^t)'(\varphi) \equiv e^{-it\Delta}$.

\begin{lemma}\label{lem:app-1}
  For any two vector fields $F$ and $G$ on $H^1(\R^d)$,
  \[
    [D_F,D_G] = D_{[G,F]}.
  \]
  Moreover, if we have two vector fields $A$ and $G$ where $A$ is affine
  such that $A'(\varphi) \equiv -i\Delta$, then we have in addition
  \[
    \begin{split}
      (D_GA)'(\varphi)[\psi] &= -i\Delta( G'(\varphi)[\psi] ),\\
      (D_G A)''(\varphi) [ \psi,\chi] &= -i\Delta(
      G''(\varphi)[\psi,\chi] ).\\
    \end{split}
  \]
\end{lemma}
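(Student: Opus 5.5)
The plan is to work directly from the definition of the Lie derivative, $(D_F G)(\varphi) = G'(\varphi)[F(\varphi)]$, and to use the chain rule for Fréchet derivatives in three steps.

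\textbf{Commutator identity.} Fix a smooth enough vector field $K$ and compute $D_F D_G K$. Since $D_G K(\varphi) = K'(\varphi)[G(\varphi)]$, differentiating in the direction $F(\varphi)$ gives, by the product/chain rule,
\[
  (D_F D_G K)(\varphi) = K''(\varphi)[G(\varphi),F(\varphi)] + K'(\varphi)\big[G'(\varphi)[F(\varphi)]\big].
\]
Swapping the roles of $F$ and $G$ gives $(D_G D_F K)(\varphi) = K''(\varphi)[F(\varphi),G(\varphi)] + K'(\varphi)\big[F'(\varphi)[G(\varphi)]\big]$. By symmetry of the second Fréchet derivative (Schwarz), the $K''$ terms cancel in the difference. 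We are left with
\[
  K'(\varphi)\big[G'(\varphi)[F(\varphi)] - F'(\varphi)[G(\varphi)]\big] = K'(\varphi)\big[[G,F](\varphi)\big] = (D_{[G,F]}K)(\varphi),
\]
using the footnote convention $[G,F](\varphi) = G'(\varphi)[F(\varphi)] - F'(\varphi)[G(\varphi)]$. This proves $[D_F,D_G] = D_{[G,F]}$.

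\textbf{First derivative of $D_G A$.} By definition, $(D_G A)(\varphi) = A'(\varphi)[G(\varphi)] = -i\Delta G(\varphi)$, because $A'(\varphi) \equiv -i\Delta$ is independent of $\varphi$. So $D_G A = (-i\Delta)\circ G$, the composition of a fixed linear operator with $G$. Differentiating this composition with the chain rule gives $(D_G A)'(\varphi)[\psi] = -i\Delta\big(G'(\varphi)[\psi]\big)$. Here it is essential that the term $A''(\varphi)[G(\varphi),\psi]$, which would normally appear, vanishes since $A''\equiv 0$.

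\textbf{Second derivative of $D_G A$.} Differentiating once more in the direction $\chi$, again pulling the fixed linear map $-i\Delta$ through the derivative, gives $(D_G A)''(\varphi)[\psi,\chi] = -i\Delta\big(G''(\varphi)[\psi,\chi]\big)$.

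\textbf{Main obstacle.} The only delicate point is functional-analytic. The operator $-i\Delta$ is unbounded, so commuting it with Fréchet differentiation requires $G$ to be differentiable as a map into a space on which $\Delta$ acts continuously (e.g. $H^{k+2}\to H^k$). The same regularity is needed for the symmetry of $K''$. I would state the identities for vector fields that are $\mathcal{C}^2$ from $H^{s+2}$ to $H^{s+2}$ (or on a suitable domain). In that setting $\Delta : H^{s+2}\to H^s$ is bounded linear and the chain rule applies verbatim, which matches the regularity used in Lemmas \ref{lemma_nonlinear_operator_estimate} and \ref{lemma_commutator_estimate}.
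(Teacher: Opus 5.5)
Your proposal is correct and follows essentially the same route as the paper: expand $D_FD_G$ and $D_GD_F$ on a test field via the chain rule, cancel the second-derivative terms by symmetry, and for affine $A$ use $A''\equiv 0$. Writing $D_GA=-i\Delta\circ G$ is just a repackaging of that last step, and your remark on the unboundedness of $\Delta$ and the needed $\mathcal{C}^2$ regularity is a rigor refinement the paper leaves implicit.
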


\begin{proof}
  These formulas come from the following calculations: for any vector field $A$ on
  $H^1(\R^d)$, it holds, for any $\varphi,\psi\in H^1(\R^d)$,
  \[
    \begin{split}
      (D_GA)(\varphi) &= A'(\varphi)[G(\varphi)], \\
      (D_GA)'(\varphi)[\psi] &= A''(\varphi)[\psi, G(\varphi)] + A'(\varphi)\left[ G'(\varphi)[\psi] \right],\\
      (D_G A)''(\varphi) [ \psi,\chi] & = A'''(\varphi)[\psi,\chi,G(\varphi)]
      + A'(\varphi)\left[G''(\varphi)[\psi,\chi]\right] \\
      & + A''(\varphi)\left[\psi,G'(\varphi)[\chi]\right]
      + A''(\varphi)\left[\chi,G'(\varphi)[\psi]\right]
    \end{split}
  \]
  where $A''(\varphi)$ is the second Fréchet derivative of $A$ at $\varphi$, which
  we recall to be a symmetric bilinear form on $H^1(\R^d) \times H^1(\R^d)$, and
  $A'''(\varphi)$ is the third Fréchet dérivative of $A$ at $\varphi$, which we
  recall to be a symmetric trilinear form on $H^1(\R^d)\times H^1(\R^d) \times
  H^1(\R^d)$.
  For the commutator, we thus have
  \[
    \begin{split}
      (D_FD_GA)(\varphi) &= (D_GA)'(\varphi)[F(\varphi)] =
      A''(\varphi)[F(\varphi), G(\varphi)] + A'(\varphi)\left[
        G'(\varphi)[F(\varphi)] \right],\\
      (D_GD_FA)(\varphi) &= (D_FA)'(\varphi)[G(\varphi)] =
      A''(\varphi)[G(\varphi), F(\varphi)] + A'(\varphi)\left[
        F'(\varphi)[G(\varphi)] \right].\\
    \end{split}
  \]
  Then, using the symmetry of the second Fréchet derivative $A''(\varphi)$, the
  associated terms cancel when computing the commutator, and we get
  \[
    \begin{split}
      ([D_F, D_G]A)(\varphi) &= (D_FD_GA)(\varphi) - (D_GD_FA)(\varphi)
      = A'(\varphi)\left[ G'(\varphi)[F(\varphi)] \right] - A'(\varphi)\left[ F'(\varphi)[G(\varphi)] \right]\\
      &= A'(\varphi)\big[ G'(\varphi)[F(\varphi)] - F'(\varphi)[G(\varphi)] \big]
      = A'(\varphi)\left[[G,F](\varphi)\right] = \left(D_{[G,F]}A\right)(\varphi)
    \end{split}
  \]
  and the first result follows.
  If $A$ is affine such that $A'(\varphi) \equiv -i\Delta$, we have in addition
  $A''\equiv0$ and $A'''\equiv0$ and thus
  \[
    (D_GA)'(\varphi)[\psi] = -i\Delta( G'(\varphi)[\psi] ) \text{ and }
    (D_G A)''(\varphi) [ \psi,\chi] = -i\Delta( G''(\varphi)[\psi,\chi] ).
  \]
\end{proof}

\begin{lemma}\label{lem:app0}
  Let $A,B,C$ be three vector flows on $H^1(\R^d)$ such that
  $(\Phi_A^t)'(\varphi)\equiv e^{-it\Delta}$. Then, for
  any $u_0\in H^1(\R^d)$, and any $s,t\geq0$,
  \[
    \exp(sD_C)D_B\exp(tD_A)\Id(u_0) = e^{-it\Delta}(B(\Phi_C^s(u_0))) .
  \]
\end{lemma}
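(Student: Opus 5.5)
The plan is to unfold the definitions of the Lie operators in \eqref{eq:Taylor_Lie}, keeping track of the order of composition, and to use that $\Phi_A^t$ is affine. The operator $\exp(tD_A)$ acts on a vector field $G$ by $(\exp(tD_A)G)(\varphi)=G(\Phi_A^t(\varphi))$. Hence $\exp(tD_A)\Id$ is the map $\varphi\mapsto\Phi_A^t(\varphi)$.

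Next we apply $D_B$. By definition of the Lie derivative, $D_B G(\varphi)=G'(\varphi)[B(\varphi)]$, so
\[
\bigl(D_B\exp(tD_A)\Id\bigr)(\varphi)=(\Phi_A^t)'(\varphi)[B(\varphi)]=e^{-it\Delta}B(\varphi).
\]
Here we used the standing hypothesis $(\Phi_A^t)'(\varphi)\equiv e^{-it\Delta}$, which holds for the affine flow, as recalled at the beginning of the appendix. This is the only place where the affine structure enters. The inhomogeneous term $e^{-it\Delta}\phi-\phi$ does not depend on $\varphi$, so it disappears on differentiation.

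Finally we apply $\exp(sD_C)$ to the vector field $G(\varphi)=e^{-it\Delta}B(\varphi)$. By the same evaluation rule, $(\exp(sD_C)G)(u_0)=G(\Phi_C^s(u_0))$. This gives $e^{-it\Delta}B(\Phi_C^s(u_0))$, which is the claim.

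To make the evaluation rule rigorous without relying on convergence of the exponential series, I will take $(\exp(sD_C)G)(\varphi)\coloneqq G(\Phi_C^s(\varphi))$ as the meaning of the notation. This is consistent with \eqref{eq:Taylor_Lie} whenever the series converges. It also satisfies the derivation rule $\frac{\dd}{\dd s}\exp(sD_C)G=\exp(sD_C)D_CG$, by the chain rule.

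The main point of care, rather than a real obstacle, is the reversed order of composition noted after \eqref{eq:FoG_flux}. The leftmost operator $\exp(sD_C)$ is applied last to the vector field, but its flow is evaluated first on the point $u_0$. One also needs enough regularity that $B(\varphi)$ lies in a space where $e^{-it\Delta}$ and the Fréchet derivative make sense, for instance $\varphi\in H^2$ with $B$ smooth as in Lemma~\ref{lemma_nonlinear_operator_estimate}. This regularity is assumed throughout.
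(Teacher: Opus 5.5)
Your proposal is correct and follows the paper's proof. Both arguments first use the affine structure to get $D_B\exp(tD_A)\Id(\varphi)=(\Phi_A^t)'(\varphi)[B(\varphi)]=e^{-it\Delta}B(\varphi)$, and then evaluate this vector field at $\Phi_C^s(u_0)$ via \eqref{eq:Taylor_Lie}. Taking $(\exp(sD_C)G)(\varphi)=G(\Phi_C^s(\varphi))$ as the definition, instead of relying on convergence of the series, is a harmless and slightly cleaner way to justify that last step.
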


\begin{proof}
  Since $(\Phi_A^t)'(\varphi) \equiv e^{-it\Delta}$ for any $\varphi\in
  H^1(\R^d)$, one has
  \[
    D_B \exp(tD_A)\Id(u_0) = D_B\Phi_A^t(u_0) = e^{-it\Delta}(B(u_0)).
  \]
  Applying \eqref{eq:Taylor_Lie}, we then get
  \[
    \exp(sD_C) D_B \exp(tD_A) \Id(v_0) = e^{-it\Delta}(B(\Phi_C^s(u_0))).
  \]
\end{proof}

\begin{lemma}\label{lem:app1}
  Let $A$ and $G$ be two vector fields on $H^1(\R^d)$. Then,
  \[
    (\exp(tD_A)G)'(\varphi)[\psi] =
    G'(\Phi_A^t(\varphi))\big[ (\Phi_A^t)'(\varphi)[\psi]\big].
  \]
  If $A$ is affine such that $(\Phi_A^t)'(\varphi)\equiv
  e^{-it\Delta}$, then it holds
  \[
    \begin{split}
      (\exp(tD_A)G)'(\varphi)[\psi] & = G'(\Phi_A^t(\varphi))[ e^{-it\Delta}\psi], \\
      (\exp(tD_A)G)''(\varphi)[\psi,\chi]&= G''(\Phi_A^t(\varphi))[ e^{-it\Delta}\psi, e^{-it\Delta}\chi].
    \end{split}
  \]
  In particular, if $G$ is linear too (for instance, $G = {\rm Id}$), the second
  order Fréchet derivative vanishes.
\end{lemma}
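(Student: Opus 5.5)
The statement to prove is Lemma~\ref{lem:app1}. The approach is to use the definition \eqref{eq:Taylor_Lie} to identify $\exp(tD_A)G$ with the composition $G\circ\Phi_A^t$, and then apply the chain rule for Fréchet derivatives.

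\textbf{First identity.} By \eqref{eq:Taylor_Lie}, $(\exp(tD_A)G)(\varphi) = G(\Phi_A^t(\varphi))$ for every $\varphi$. This identification is the only point where the exponential notation must be related to an honest map $H^1(\R^d)\to H^1(\R^d)$. Differentiating in $\varphi$ in direction $\psi$, the chain rule gives
\[
(\exp(tD_A)G)'(\varphi)[\psi] = G'(\Phi_A^t(\varphi))\big[(\Phi_A^t)'(\varphi)[\psi]\big],
\]
which is the first claim. This requires $G$ to be Fréchet differentiable at $\Phi_A^t(\varphi)$ and $\varphi\mapsto\Phi_A^t(\varphi)$ to be differentiable, which I take as standing assumptions.

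\textbf{Affine case.} In this case $\Phi_A^t(\varphi) = e^{-it\Delta}\varphi + e^{-it\Delta}\phi - \phi$. Its derivative is the constant bounded linear map $e^{-it\Delta}$, so the first formula of the affine case follows by substitution. For the second derivative, write
\[
(\exp(tD_A)G)'(\varphi)[\psi] = G'(\Phi_A^t(\varphi))[e^{-it\Delta}\psi]
\]
and differentiate once more in $\varphi$ in direction $\chi$. The argument $e^{-it\Delta}\psi$ does not depend on $\varphi$, and $\Phi_A^t$ has vanishing second derivative. Hence the only contribution is $G''(\Phi_A^t(\varphi))[e^{-it\Delta}\chi, e^{-it\Delta}\psi]$, and the symmetry of $G''$ lets us reorder the two arguments. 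Equivalently, one can apply the general second-order chain rule $(G\circ\Phi)'' = G''(\Phi)[\Phi'\cdot,\Phi'\cdot] + G'(\Phi)[\Phi''[\cdot,\cdot]]$ and note that $\Phi''\equiv0$.

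\textbf{Final remark.} If $G$ is linear, then $G''\equiv 0$, so the second-order expression vanishes. There is no real obstacle here; the only delicate point is the first step's justification that $\exp(tD_A)G$ coincides with $G\circ\Phi_A^t$ as a map. I would take this identification as the definition, valid for the solution flow, rather than rely on convergence of the formal series.
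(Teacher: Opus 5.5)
Your proposal is correct and matches the paper's proof. Like the paper, you identify $\exp(tD_A)G$ with $G\circ\Phi_A^t$ via \eqref{eq:Taylor_Lie} and apply the chain rule. In the affine case you use $(\Phi_A^t)'\equiv e^{-it\Delta}$ and $(\Phi_A^t)''\equiv 0$, so that only $G''$ survives in the second derivative, and it vanishes when $G$ is linear.
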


\begin{proof}
  The results of the Lemma~follow from
  \[
    (\exp(tD_A)G)(\varphi) = G(\Phi_A^t(\varphi))
  \]
  by applying the chain rule
  \[
    (\exp(tD_A)G)'(\varphi)[\psi]
    = G'(\Phi_A^t(\varphi))\big[(\Phi_A^t)'(\varphi)[\psi]\big]
    = G'(\Phi_A^t(\varphi))[ e^{-it\Delta}\psi],
  \]
  where the first equality holds for any $A$ and the second only holds since $A$
  is affine. Similarly, since $(\Phi_A^t)''(\varphi) \equiv 0$ if $A$ is affine,
  only the second derivative of $G$ remains  and
  \[
    (\exp(tD_A)G)''(\varphi)[\psi,\chi]
    = G''(\Phi_A^t(\varphi))[ e^{-it\Delta}\psi, e^{-it\Delta}\chi].
  \]
\end{proof}

\begin{lemma}\label{lem:app2}
  Let $v$ be the solution to the Cauchy problem
  \[
    \begin{cases}
      v'(t) = H(v(t))\\
      v(0) = v_0\in H^1(\R^d),
    \end{cases}
  \]
  where $H = A + B$ is a vector field on $H^1(\R^d)$, with $A$ and $B$ defined
  in Section~\ref{sec:commutator_est}. Then, we have the
  following Duhamel's formula, expressed with the flows of $A$ and $B$
  or, more formally, with Lie derivatives:
  \[
    \begin{split}
      v(t) & = \Phi_A^t(v_0) + \int_0^{t} e^{-i(t-s)\Delta} B(v(s)) \dd s \\
      & =\exp(t D_A) \Id(v_0)  +  \int_0^{t} \exp(s D_H) D_B \exp((t-s)D_A) \Id(v_0) \dd s.
    \end{split}
  \]
\end{lemma}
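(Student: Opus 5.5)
We need to prove Lemma \ref{lem:app2}: the Duhamel formula in both its classical form and its Lie-derivative form.

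\textbf{First identity.} We differentiate $s\mapsto \Phi_A^{t-s}(v(s))$ on $[0,t]$; this is the standard variation-of-constants argument adapted to the affine flow. Since $\Phi_A^{\sigma}(\varphi)=e^{-i\sigma\Delta}\varphi+e^{-i\sigma\Delta}\phi-\phi$, the chain rule together with $(\Phi_A^\sigma)'\equiv e^{-i\sigma\Delta}$ gives
\[
\frac{\dd}{\dd s}\Phi_A^{t-s}(v(s)) = -A\big(\Phi_A^{t-s}(v(s))\big) + e^{-i(t-s)\Delta}\big(A(v(s))+B(v(s))\big).
\]
The $A$-terms cancel by the affine identity $A(\Phi_A^{\sigma}(\varphi)) = e^{-i\sigma\Delta}A(\varphi)$. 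To check it, note that
\[
A(\Phi_A^{\sigma}(\varphi))=-i\Delta\big(e^{-i\sigma\Delta}\varphi+e^{-i\sigma\Delta}\phi-\phi\big)-i\Delta\phi=e^{-i\sigma\Delta}(-i\Delta\varphi-i\Delta\phi),
\]
because $\Delta$ commutes with $e^{-i\sigma\Delta}$ on $X^k$. What remains is $\frac{\dd}{\dd s}\Phi_A^{t-s}(v(s)) = e^{-i(t-s)\Delta}B(v(s))$. Integrating from $0$ to $t$ yields $v(t)-\Phi_A^t(v_0)=\int_0^t e^{-i(t-s)\Delta}B(v(s))\,\dd s$.

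For rigor we work with $v\in\mathcal{C}^0([0,t],H^4)\cap \mathcal{C}^1([0,t],H^2)$, which is available by Proposition \ref{prop:high_reg}. We then extend to $H^1$ data by density and continuity of both sides in $H^1$, using Strichartz/Cazenave-type continuous dependence. This regularity justification is the only delicate step, and I expect it to be routine.

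\textbf{Second identity.} We identify each term. First, $\exp(tD_A)\Id(v_0)=\Phi_A^t(v_0)$ by \eqref{eq:Taylor_Lie}. For the integrand, apply Lemma \ref{lem:app0} with $C=H$, $s$ in place of $s$, and $t-s$ in place of $t$:
\[
\exp(sD_H)D_B\exp((t-s)D_A)\Id(v_0)=e^{-i(t-s)\Delta}B(\Phi_H^s(v_0))=e^{-i(t-s)\Delta}B(v(s)),
\]
since $\Phi_H^s(v_0)=v(s)$. Thus the two integrals coincide pointwise in $s$, and the second formula follows from the first.

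One caveat concerns the formal use of the series \eqref{eq:Taylor_Lie}. In Lemma \ref{lem:app0} it is only used through the composition identity $(\exp(sD_C)G)(u_0)=G(\Phi_C^s(u_0))$. That identity should be taken as the definition of $\exp(sD_C)G$, so no convergence of the series is needed.
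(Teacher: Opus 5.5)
Your proof is correct and is essentially the paper's argument run in the opposite order. The paper differentiates $s\mapsto G\big(\Phi_A^{t-s}(\Phi_H^s(v_0))\big)$ for a general vector field $G$ and cancels the $A$-terms using the flow identity $(\Phi_A^{t-s})'(u)[A(u)]=A(\Phi_A^{t-s}(u))$, which it derives from the group property and which you instead verify directly from the affine structure; it then identifies the derivative with $\exp(sD_H)D_B\exp((t-s)D_A)G$ via Lemma~\ref{lem:app1}, and only afterwards takes $G=\Id$ and recovers the classical formula through Lemma~\ref{lem:app0}, whereas you derive the classical formula first and then convert it with Lemma~\ref{lem:app0}, and your added regularity/density justification is a detail the paper leaves implicit.
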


\begin{proof}
  The first formula is the standard Duhamel's formula for nonlinear
  Schr\"odinger equations. As for the second expression,
  using the notations and rules introduced in Section~\ref{sec_Lie_derivatives},
  one can compute, for any vector field $G$,
  \[
    \begin{split}
      &\frac{\dd}{{\dd}s} \Big(\big(\exp(sD_H)\exp((t-s)D_A) G\big)(v_0)\Big)
      = \frac{\dd}{{\dd}s}\Big( G\big(\Phi_A^{t-s}(\Phi_H^s(v_0))\big)
      \Big) \\
      =&\  G'\big(\Phi_A^{t-s}(\Phi_H^s(v_0))\big)
      \Big[-A(\Phi_A^{t-s}(\Phi_H^s(v_0)))
      + (\Phi_A^{t-s})'(\Phi_H^s(v_0))[H(\Phi_H^s(v_0))] \Big].
    \end{split}
  \]
  Next, note that, for any $u_0\in H^1(\R^d)$,
  \[
    \begin{split}
      (\Phi_A^{t-s})'(u_0)[A(u_0)] &= \Big(\frac{\dd}{{\dd}\tau}
      \Phi_A^{t-s}(\Phi_A^\tau(u_0))\Big)_{\tau=0}
      = \Big(\frac{\dd}{{\dd}\tau}\Phi_A^{t-s+\tau}(u_0)\Big)_{\tau=0} \\
      &= \Big(\frac{\dd}{{\dd}\tau}
      \Phi_A^{\tau}(\Phi_A^{t-s}(u_0))\Big)_{\tau=0} = A(\Phi_A^{t-s}(u_0))
    \end{split}
  \]
  from which we deduce that, with $u_0 = \Phi_H^s(v_0)$,
  \[
    \begin{split}
      & -A(\Phi_A^{t-s}(\Phi_H^s(v_0)))
      + (\Phi_A^{t-s})'(\Phi_H^s(v_0))[H(\Phi_H^s(v_0))] \\ =&
      \ -A(\Phi_A^{t-s}(\Phi_H^s(v_0))) +
      (\Phi_A^{t-s})'(\Phi_H^s(v_0))\Big[A(\Phi_H^s(v_0)) + B(\Phi_H^s(v_0))\Big] \\ =&
      \ (\Phi_A^{t-s})'(\Phi_H^s(v_0))[B(\Phi_H^s(v_0))].
    \end{split}
  \]
  Thus,
  \[
    \frac{\dd}{{\dd}s} \Big(\big(\exp(sD_H)\exp((t-s)D_A) G\big)(v_0)\Big)
    =  G'\big(\Phi_A^{t-s}(\Phi_H^s(v_0))\big)
    \Big[(\Phi_A^{t-s})'(\Phi_H^s(v_0))[B(\Phi_H^s(v_0))]\Big].
  \]
  On the other hand, note that a direct application of \eqref{eq:Taylor_Lie} and
  Lemma~\ref{lem:app1} yields
  \[
    \begin{split}
      \big(\exp(sD_H)D_B\exp((t-s)D_A)G\big)(v_0)
      &= (D_B\exp((t-s)D_A)G)(\Phi_H^s(v_0)) \\
      &= (\exp((t-s)D_A)G)'(\Phi_H^s(v_0))\big[B(\Phi_H^s(v_0))\big] \\
      &= G'(\Phi_A^{t-s}(\Phi_H^s(v_0)))
      \Big[(\Phi_A^{t-s})'(\Phi_H^s(v_0))[B(\Phi_H^s(v_0))]\Big].
    \end{split}
  \]
  Putting things together, we obtain\footnote{One will notice that this formula
    is consistent with the formal calculation
    \[ \frac{\dd}{{\dd}s} \Big(\big(\exp(sD_H)\exp((t-s)D_A) G\big)(v_0)\Big)
      = \exp(sD_H)(D_H - D_A)\exp((t-s)D_A) = \exp(sD_H)D_B\exp((t-s)D_A).\]}
  \[
    \frac{\dd}{{\dd}s} \Big(\big(\exp(sD_H)\exp((t-s)D_A) G\big)(v_0)\Big)
    = \big(\exp(sD_H)D_B\exp((t-s)D_A)G\big)(v_0).
  \]

  Then, one computes
  \[
    \begin{split}
      \big(\exp(tD_H)G\big)(v_0) - \big(\exp(tD_A)G\big)(v_0) &=
      \Big[ \big(\exp(sD_H)\exp((t-s)D_A)G\big)(v_0) \Big]_{s=0}^t \\ &=
      \int_0^t \frac{\dd}{{\dd}s} \Big(\big(\exp(sD_H)\exp((t-s)D_A) G\big)(v_0)\Big)
      {\dd} s \\ &=
      \int_0^t \big(\exp(sD_H)D_B\exp((t-s)D_A)G\big)(v_0) \dd s.
    \end{split}
  \]
  Thus, with $G=\Id$,
  \[
    v(t) = \exp(tD_H)\Id(v_0) =
    \exp(t D_A) \Id(v_0)  +  \int_0^{t} \exp(s D_H) D_B \exp((t-s)D_A) \Id(v_0) \dd s.
  \]
  Note that, using Lemma~\ref{lem:app0}, we recover the standard Duhamel's
  formula, with $v(s) = \Phi_H^s(v_0)$,
  \[
    v(t) = \Phi_A^t(v_0) + \int_0^{t} e^{-i(t-s)\Delta} B(v(s)) \dd s.
  \]
\end{proof}

\begin{lemma}\label{lem:app3}
  Let $H,A,B,C$ be vector fields on $H^1(\R^d)$, with $(\Phi_A^t)' \equiv e^{-it\Delta}$.
  Then, we have, for any $r,s,t \geq 0$ and any $v_0\in H^1(\R^d)$ with $v(r) = \Phi_H^r(v_0)$,
  \[
    \exp(r D_H) D_B \exp(sD_A ) D_C \exp ( tD_A) \Id(v_0)
    =  e^{-it\Delta} C' \left( \Phi_A^s(v(r)) \right)
    \left[e^{-is\Delta} B(v(r)) \right].
  \]
  Moreover, for any $q\geq0$,
  \[
    \begin{split}
      & \exp(qD_H)D_B\exp(rD_A)D_B\exp(sD_A)D_B\exp(tD_A)\Id(v0)\\
      =\ & e^{-it\Delta}\Big(
      B'\left(\Phi_A^{s+r}(v(q))\right)\left[e^{-is\Delta}B'(\Phi_A^r(v(q)))
        [e^{-ir\Delta}B(v(q))]\right] \\ &+ B''\left(\Phi_A^{s+r}(v(q))\right)
      \left[e^{-i(s+r)\Delta}B(v(q)), e^{-is\Delta}B(\Phi_A^r(v(q)))\right]\Big).
    \end{split}
  \]
\end{lemma}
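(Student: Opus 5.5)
The plan is to reduce both identities to the defining property \eqref{eq:Taylor_Lie}, namely $(\exp(rD_H)G)(v_0) = G(\Phi_H^r(v_0))$, together with the chain-rule formulas of Lemma~\ref{lem:app1} for the affine flow of $A$. We read the operator word from right to left. Each factor $\exp(tD_A)$ acting on a vector field $G$ produces $G\circ\Phi_A^t$. Each $D_B$ acting on a vector field $K$ produces $\varphi\mapsto K'(\varphi)[B(\varphi)]$. The outermost $\exp(rD_H)$ finally evaluates at $v(r)=\Phi_H^r(v_0)$.

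For the first identity, we start with $K_1 := D_C\exp(tD_A)\Id$. By Lemma~\ref{lem:app0} (or directly by Lemma~\ref{lem:app1} with $G=\Id$), $K_1(\varphi) = e^{-it\Delta}C(\varphi)$, which is a linear operator composed with $C$. Next, $K_2 := \exp(sD_A)K_1$ is given by $K_2(\varphi) = e^{-it\Delta}C(\Phi_A^s(\varphi))$. Applying $D_B$, Lemma~\ref{lem:app1} yields
\[
(D_BK_2)(\varphi) = K_2'(\varphi)[B(\varphi)] = e^{-it\Delta}C'(\Phi_A^s(\varphi))[e^{-is\Delta}B(\varphi)],
\]
since $(\Phi_A^s)'\equiv e^{-is\Delta}$ and $e^{-it\Delta}$ is linear, so it commutes with differentiation. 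Evaluating at $\varphi=v(r)$ through $\exp(rD_H)$ gives the first claim.

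For the second identity, we define $K(\varphi) := e^{-it\Delta}B'(\Phi_A^s(\varphi))[e^{-is\Delta}B(\varphi)]$. The first part, with $C=B$, shows that $K = D_B\exp(sD_A)D_B\exp(tD_A)\Id$. We then set $\tilde K := \exp(rD_A)K$, that is, $\tilde K(\varphi) = K(\Phi_A^r(\varphi))$, so that the target is $(D_B\tilde K)(v(q)) = \tilde K'(v(q))[B(v(q))]$. Differentiating $K$ by the product/chain rule gives
\[
K'(\varphi)[\psi] = e^{-it\Delta}\Big(B''(\Phi_A^s(\varphi))[e^{-is\Delta}\psi, e^{-is\Delta}B(\varphi)] + B'(\Phi_A^s(\varphi))[e^{-is\Delta}B'(\varphi)[\psi]]\Big).
\]
Then $\tilde K'(\varphi)[\psi] = K'(\Phi_A^r(\varphi))[e^{-ir\Delta}\psi]$. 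Substituting $\varphi=v(q)$ and $\psi=B(v(q))$, and using $\Phi_A^s\circ\Phi_A^r=\Phi_A^{s+r}$ and $e^{-is\Delta}e^{-ir\Delta}=e^{-i(s+r)\Delta}$, produces exactly the two terms of the claim. In the $B''$ term we use the symmetry of $B''$ to order the arguments as stated.

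The only delicate point is bookkeeping rather than analysis. First, we must keep the order of composition straight, since $\exp(rD_A)$ composes on the inside. Second, we must justify that the Fréchet derivatives exist on the relevant domain. Formally, all steps are chain-rule identities valid for $B$ smooth on $H^2$ and sufficiently regular arguments, as in the setting of Section~\ref{sec:local_error}. We would state the identity for $v_0$ in $H^k$, $k>d/2$, where $B$ is a polynomial map and hence $C^\infty$, and avoid insisting on $H^1$. Finally, the affine structure matters only through $(\Phi_A^t)''\equiv0$, which prevents any extra second-derivative term coming from the flow of $A$.
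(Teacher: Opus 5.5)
Your proof is correct and follows the same route as the paper. You unwind the operator word from right to left using Lemma~\ref{lem:app0}, the chain rule with $(\Phi_A^t)'\equiv e^{-it\Delta}$, and a final evaluation at $v(r)=\Phi_H^r(v_0)$ via \eqref{eq:Taylor_Lie}. For the second identity, the paper only sketches the step (``replace $H$ by $A$ and $C$ by $B$, then apply $\exp(qD_H)D_B$''); your explicit product/chain-rule differentiation of $K\circ\Phi_A^r$ fills it in and reproduces both terms of the statement exactly.
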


\begin{proof}
  The proof of the first expression follows naturally by computing all the terms in
  \[ \exp(r D_H) D_B \exp(sD_A ) D_C \exp ( tD_A) \Id(v_0) \]
  from right to left using the manipulation rules of Lie derivatives.
  For the first terms, we immediately get, from
  Lemma~\ref{lem:app0},
  \[
    \exp(sD_A) D_C \exp(tD_A) \Id(v_0) = e^{-it\Delta}(C(\Phi_A^s(v_0))).
  \]
  Next, we apply $D_B$ to the vector field $F: \varphi \mapsto
  e^{-it\Delta}(C(\Phi_A^s(\varphi)))$:
  \[
    D_B \exp(sD_A ) D_C \exp ( tD_A) \Id(v_0) = (D_BF)(v_0) = F'(v_0)[B(v_0)].
  \]
  By the chain rule and $(\Phi_A^s)'\equiv e^{-is\Delta}$, we get
  \[
    F'(\varphi)[\psi] =
    e^{-it\Delta}\Big(C'(\Phi_A^s(\varphi))\big[e^{-is\Delta}\psi\big]\Big).
  \]
  Hence,
  \[
    D_B \exp(sD_A ) D_C \exp ( tD_A) \Id(v_0) =
    e^{-it\Delta}\Big(C'(\Phi_A^s(v_0))\big[e^{-is\Delta}(B(v_0))\big]\Big).
  \]
  Finally, applying one last time \eqref{eq:Taylor_Lie} yields, with $v(r) =
  \Phi_H^r(v_0)$,
  \[
    \exp(r D_H) D_B \exp(sD_A ) D_C \exp ( tD_A) \Id(v_0)
    = e^{-it\Delta}\Big(C'(\Phi_A^s(v(r)))\big[e^{-is\Delta}(B(v(r)))\big]\Big),
  \]
  from which the result follows. The second expression then follows similarly
  to the first one, replacing $H$ by $A$ and $C$ by $B$, and then applying
  $\exp(qD_H)D_B$ to it.
\end{proof}

\begin{lemma}\label{lem:app4}
  Let $A,B$ be two vector flows on $H^1(\R^d)$. Then, for any $v_0\in
  H^1(\R^d)$ and any $r,s,t \geq0$,
  \[  \exp(t D_A) \exp(s D_B)D_B^2\Id(v_0) =
    B'(\eta) \big[B(\eta) \big],
  \]
  where $\eta = \Phi_B^s( \Phi_A^t(v_0))$.
  Moreover, if $(\Phi_A^r)'\equiv e^{-ir\Delta}$,
  \[  \exp(t D_A) \exp(s D_B)D_B^3 \exp(r D_A)\Id(v_0) = e^{-ir\Delta}\Big(
    B''(\eta)\big[B(\eta),B(\eta)\big] +
    B'(\eta)\big[B'(\eta)[B(\eta)]\big]\Big).
  \]
\end{lemma}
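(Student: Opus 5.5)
The plan is to unwind both Lie-derivative expressions from right to left, using only the defining property \eqref{eq:Taylor_Lie}: $(\exp(tD_F)G)(\varphi)=G(\Phi_F^t(\varphi))$. The composition rule \eqref{eq:FoG_flux} then turns the outer product $\exp(tD_A)\exp(sD_B)$ into evaluation of a vector field at the point $\Phi_B^s(\Phi_A^t(v_0))=\eta$.

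\textbf{First identity.} Compute $D_B^2\Id$ as a vector field. Since $D_B\Id(\varphi)=\Id'(\varphi)[B(\varphi)]=B(\varphi)$, we have $D_B\Id=B$. Hence
\[
  D_B^2\Id(\varphi)=(D_BB)(\varphi)=B'(\varphi)[B(\varphi)].
\]
Next, $\exp(sD_B)$ applied to the vector field $G=D_B^2\Id$ gives the map $\varphi\mapsto G(\Phi_B^s(\varphi))$. Applying $\exp(tD_A)$ in front composes on the inside with $\Phi_A^t$, by \eqref{eq:FoG_flux}: the operator written leftmost acts first on the argument. Evaluating at $v_0$ therefore gives $G(\eta)=B'(\eta)[B(\eta)]$ with $\eta=\Phi_B^s(\Phi_A^t(v_0))$.

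\textbf{Second identity.} Start on the right. By Lemma~\ref{lem:app1} with $G=\Id$, the vector field $F_0\coloneqq\exp(rD_A)\Id=\Phi_A^r$ has $F_0'(\varphi)\equiv e^{-ir\Delta}$ and $F_0''\equiv0$.

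Apply $D_B$ three times, differentiating with the chain rule at each step and using that $e^{-ir\Delta}$ is linear and commutes with Fréchet differentiation.
\[
  D_BF_0(\varphi)=F_0'(\varphi)[B(\varphi)]=e^{-ir\Delta}B(\varphi),
\]
which is the special case already recorded in Lemma~\ref{lem:app0}. Next,
\[
  D_B^2F_0(\varphi)=e^{-ir\Delta}\big(B'(\varphi)[B(\varphi)]\big).
\]
Differentiating $\varphi\mapsto B'(\varphi)[B(\varphi)]$ in direction $\psi$ gives $B''(\varphi)[\psi,B(\varphi)]+B'(\varphi)[B'(\varphi)[\psi]]$. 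Taking $\psi=B(\varphi)$ yields
\[
  D_B^3F_0(\varphi)=e^{-ir\Delta}\Big(B''(\varphi)[B(\varphi),B(\varphi)]+B'(\varphi)\big[B'(\varphi)[B(\varphi)]\big]\Big).
\]
Finally, apply $\exp(tD_A)\exp(sD_B)$ exactly as in the first part. This evaluates the vector field $D_B^3F_0$ at $\eta=\Phi_B^s(\Phi_A^t(v_0))$ and gives the claimed formula.

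There is no real obstacle; the identities are formal consequences of the chain rule. The one point needing care is the ordering convention in \eqref{eq:FoG_flux}. The composition of operators is reversed relative to the composition of flows, so $\eta$ must be $\Phi_B^s\circ\Phi_A^t(v_0)$ and not the other way round. Regularity is sufficient: the paper applies these identities with $v_0\in H^2$, and $B$ is smooth on $H^2(\R^d)$ for $d\le3$, so every Fréchet derivative used exists.
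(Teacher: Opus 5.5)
Your proposal is correct and follows the paper's proof. Like the paper, you first compute the inner vector fields $D_B^2\Id=B'[B]$ and $D_B^3\exp(rD_A)\Id=e^{-ir\Delta}\big(B''[B,B]+B'[B'[B]]\big)$ via the chain rule and $(\Phi_A^r)'\equiv e^{-ir\Delta}$, and then apply \eqref{eq:Taylor_Lie} twice to evaluate them at $\eta=\Phi_B^s(\Phi_A^t(v_0))$. You simply write out the chain-rule differentiation and the ordering convention of \eqref{eq:FoG_flux}, both of which the paper leaves implicit.
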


\begin{proof}
  First, notice that
  \[
    D_B\Id(v_0) = B(v_0) \quad\text{and}\quad D_B^2\Id(v_0) = B'(v_0)[B(v_0)].
  \]
  The first result then simply follows by applying twice \eqref{eq:Taylor_Lie}.
  For the second result, note that, since $(\Phi_A^r)'\equiv e^{-ir\Delta}$, one
  can compute
  \[
    D_B^3\exp(rD_A)\Id(v_0) = e^{-ir\Delta}\Big(
    B''(v_0)\big[B(v_0), B(v_0)] + B'(v_0)\big[B'(v_0)[B(v_0)]\big]\Big),
  \]
  and apply again \eqref{eq:Taylor_Lie} twice.
\end{proof}

\bibliographystyle{siam}
\bibliography{biblio}

\end{document}